	\newtheorem{assumption}{Assumption}
\newtheorem{definition}{Definition}
\newtheorem{remark}{Remark}
\newtheorem{proposition}{Proposition}[section]
\newtheorem{theorem}{Theorem}[section]
\newtheorem{lemma}{Lemma}[section]
\numberwithin{equation}{section}
\newenvironment{proof}{\smallskip\noindent\emph{Proof.}\hspace{1pt}}%
{\hspace{-5pt}{\nobreak\quad\nobreak\hfill\nobreak$\square$\vspace{8pt}%
		\par}\smallskip\goodbreak}
\newcommand{\g}{\gamma}
\newcommand{\e}{\mathrm{e}}
\newcommand{\be}{\begin{equation}}
\newcommand{\ee}{\end{equation}}
\newcommand{\Jcl}{\mathcal{J}_\text{clas}}
\newcommand{\Jrel}{\mathcal{J}_\text{rel}}
\newcommand{\w}{\tilde{w}}
\newcommand{\z}{\tilde{z}}
\newcommand{\hone}{\tilde{h}_1}
\newcommand{\htwo}{\tilde{h}_2}
\newcommand\restri[2]{{
		\left.\kern-\nulldelimiterspace 
		#1 
		\right|_{#2} 
}}
\definecolor{ffqqqq}{rgb}{1.,0.,0.}
\definecolor{uuuuuu}{rgb}{0.26666666666666666,0.26666666666666666,0.26666666666666666}
\begin{document}

	\title{Formation of singularities for the  relativistic Euler equations}
	
	\author{
		Nikolaos Athanasiou\footnote{Mathematical Institute, University of Oxford, Andrew Wiles Building, Radcliffe Observatory Quarter, Woodstock Road, Oxford, OX26GG, Oxford, United Kingdom. \hspace{1mm}E-mail: Nikolaos.Athanasiou@maths.ox.ac.uk} \\
		\texttt{Nikolaos.Athanasiou@maths.ox.ac.uk}
		\and
		Shengguo Zhu\footnote{Mathematical Institute, University of Oxford, Andrew Wiles Building, Radcliffe Observatory Quarter, Woodstock Road, Oxford, OX26GG, Oxford, United Kingdom. \hspace{1mm}E-mail: Shengguo.Zhu@maths.ox.ac.uk} \\
		\texttt{Shengguo.Zhu@maths.ox.ac.uk}
	}
	\maketitle
	
	\thispagestyle{plain}

	\vspace{-0.6cm}
	\begin{abstract}\vspace{5pt}
		
		\par \noindent 	
		This paper contributes to the study of large data problems for $C^1$ solutions of the relativistic Euler equations. In the $(1+1)$-dimensional spacetime setting, if the initial data are away from  vacuum, a key difficulty in proving the global well-posedness or  finite time blow-up  is coming up with a way to obtain sharp enough control on the lower bound of the mass-energy density function $\rho$. First,  for $C^1$ solutions  of  the 1-dimensional classical   isentropic compressible   Euler equations  in the Eulerian setting,  we show  a novel idea of obtaining a mass density  time-dependent   lower bound  by studying the difference of the two Riemann invariants, along with certain weighted gradients of them. Furthermore,  using an elaborate argument on a certain ODE inequality and introducing some key artificial (new) quantities,  we apply this idea   to obtain the lower bound estimate for the  mass-energy density of  the (1+1)-dimensional relativistic Euler equations.  Ultimately, for $C^1$ solutions with uniformly positive initial mass-energy density   of the (1+1)-dimensional relativistic Euler equations, we give a necessary and sufficient condition  for  the formation of singularity     in finite time, which gives a complete  picture for the ($C^1$)   large data problem in dimension $(1+1)$. Moreover,  for the (3+1)-dimensional relativistic fluids, under the assumption that the initial mass-energy density  vanishes in some open domain, we give two sufficient conditions for  $C^1$ solutions to blow up in finite time, no matter how small the initial data are. We also do some interesting studies on the asymptotic behavior of the  relativistic velocity,   which  tells us that  one can not obtain any global regular solution whose $L^\infty$ norm of $u$ decays to zero as time $t$ goes to infinity. 
	\end{abstract}
	\vspace{0.2cm}

	\setlength{\voffset}{-0in} \setlength{\textheight}{0.9\textheight}
	
	\setcounter{page}{1} \setcounter{equation}{0}

	\maketitle
	
	2010\textit{\ Mathematical Subject Classification:} 83A05, 	76N15,  35L65, 35L67.
	
	\textit{Key Words:} 
	Relativistic Euler equations, Shock formation,  Large data, Vacuum, Asymptotic behavior.

	\section{Introduction}
	

	\vspace{3mm}
	
	\quad This paper is devoted to the Cauchy problem of the	 relativistic Euler equations (henceforth denoted by \textit{RE}) with   large data.  In fluid mechanics and astrophysics, the relativistic Euler equations are a generalization of the Euler equations that account for the effects of special relativity. On a fixed $(d+1)$-dimensional Minkowski background, they are given by 
	\begin{equation}\label{RE}
	\begin{cases}
	\displaystyle
	\big( \frac{\rho + P \lvert u\rvert^2/c^4}{1-\lvert u \rvert^2/c^2} \big)_t  + \text{div}\big( \frac{(\rho+ P/c^2)u}{1-\lvert u\rvert^2/c^2} \big) = 0, \\[12pt]
	\displaystyle
	\big( \frac{(\rho+ P/c^2)u}{1-\lvert u\rvert^2/c^2} \big)_t +  \text{div}\big( \frac{(\rho+ P/c^2)}{1-\lvert u\rvert^2/c^2}\hspace{.5mm} u \otimes u\big) +\nabla P=0.
	\end{cases}
	\end{equation}Here and throughout,  $\rho \geq 0$ denotes the mass-energy density, $u=(u^{(1)},..., u^{(d)} )^\top \in \mathbb{R}^d$ denotes the relativistic velocity, $d\geq 1$ the dimension of the space, $c>0$ a large constant corresponding to  the speed of light, $P$ the pressure of the fluid, $x=(x^{(1)},..., x^{(d)} )^\top \in \mathbb{R}^d$ the Eulerian spatial coordinate and finally $t \in \mathbb{R}_{\geq 0}$ denotes the time coordinate.	The  constitutive relation $P = P(\rho)$  considered throughout most of this paper  is 
	\be \label{P} P(\rho) = k^2 \rho^\gamma, \ee   for  fixed  constants  $k>0$ and  $\gamma \geq 1$. 
	
	\vspace{3mm}
	Abiding by the fact that the theory of special relativity, in a regime of low velocities, should reduce to the classical Newtonian theory, the system $\eqref{RE}$ formally reduces to the classical $d$-dimensional isentropic compressible Euler equations (CE) when  $c$ approaches infinity:
	\be  \label{CE}\begin{cases} 
		\rho_t +\text{div}(\rho u ) = 0, \\
		
		(\rho u)_t +\text{div}(\rho u \otimes u) + \nabla P= 0.
	\end{cases}\ee

	\vspace{3mm}
	
	It is well-known that, for nonlinear hyperbolic conservation laws, a singularity  can form in finite time from initial compression  no matter how
	small or smooth the data are. Classical results including 
	Liu \cite{Liu1}, Li-Zhou-Kong \cite{LiZhouKong2} et al confirm that when the  initial data are
	small smooth perturbations of
	constant states, a blowup in the gradient of the solutions occurs in finite time \textit{if and only if} the initial data contain 
	any compression in a genuinely nonlinear characteristic field.
	\vspace{3mm}

	A natural follow-up question is whether  this dichotomy persists, at least for archetypal systems of conservation laws such as the (1+1)-dimensional relativistic Euler equations \eqref{RE} (resp.the classical compressible Euler equations \eqref{CE}), when one passes to the framework of large data problems. It turns out, as we shall promptly explain in this work, that one of the key issues towards establishing a similar dichotomy for large data is finding an
	effective way to obtain sharp enough control on the lower bound of the mass-energy  density (resp. the mass density). 
	
	For the $1$-dimensional classical  compressible Euler equations \eqref{CE},  some important  progress has been achieved for large data problems.  When $\gamma\ge 3$  in the  pressure law (\ref{P}),   the argument in Lax \cite{Lax} for general $2\times 2$ symmetric hyperbolic systems  can be applied as is to the  large data problem for the isentropic  Euler equations (see Section \ref{section4} of the current paper for CE)\footnote{See  also a generalization to full Euler equations by Chen-Young-Zhang \cite{ChenYoungZhang}.}.  What renders this possible is that, when $\gamma \geq 3$, a lower bound control for the density is not needed.  Therefore, the essence of the problem is to establish the finite time singularity formation for the compressible Euler equations in the most physically relevant case $1<\gamma<3$. 
	For piecewise Lipschitz continuous solutions, in an interesting paper, Lin  \cite{Lin2} argues  that the density has a (sharp) $O(1+t)^{-1}$ lower bound and proceeds to infer the corresponding global well-posedness. However satisfying, Lin's result comes with an important caveat: It only applies to initial data that are purely rarefactive, i.e. devoid of any compression. For general $C^1$ solutions including compressions in the solution, further novelties were required. In a recent paper  \cite{ChenPanZhu}, Chen-Pan-Zhu find an $O(1+t)^{-\frac{4}{(3-\gamma)}}$ lower bound when $1<\gamma<3$ \textit{and when the data is uniformly away from vacuum}. This result helps them to prove that gradient blowup of $\rho$ and/or ${u}$ happens in finite time \textit{if and only if} the initial data are forward or backward compressive somewhere, thus establishing the same dichotomy observed for small initial data \cite{LiZhouKong2,Liu1}.  Some further developments were achieved in \cite{ChenPanZhu2} where,  for general Lipschitz continuous solutions of (\ref{CE}) with $1<\gamma<3$, the authors improve the lower bound on the density from $O((1+t)^{-\frac{4}{(3-\gamma)}})$ in \cite{ChenPanZhu}  to the optimal order of $O(1+t)^{-1}$.  Finally, in Chen-Chen-Zhu \cite{ChenChenZhu} the authors  provide a new method to extend the theory to more general initial density profiles including possible far field vacuum, such as when $\rho(0,x)\in C^1(\mathbb{R})\cap L^1(\mathbb{R})$. Also, the authors provide the first global continuous non-isentropic solution including weak compression, using a new method involving solving an inverse Goursat problem. The solution they obtain  is almost classical,  except on two  characteristics, along which the solution has a  weak discontinuity (continuous but non-smooth).  		There is a large literature of works on sufficient conditions for formation of singularities in solutions to the compressible Euler equations and systems of hyperbolic conservation laws in multiple space dimensions. See a brief list in \cite{Christodoulou, ChristodoulouMiao, Dafermos, LiZhu,  Luk,   MakinoUkaiKawashima,    Rammaha, Sideris}.

	\vspace{3mm}
	
	In contrast, for the (1+1)-dimensional relativistic Euler equations, studies on the singularity formation and global well-posedness of  solutions with large data are very few.  Under the assumption that $P=\sigma^2 \rho$ for some positive constant $\sigma<c$, Smoller-Temple establish in their seminal work \cite{TempleSmoller} the existence of a global BV weak solution to the Cauchy problem, by crucially noticing that the shock curves for \eqref{RE}  in one dimension satisfy very strong geometric properties. For a general $\gamma$-law $P(\rho)$, Chen \cite{Chen} studies the corresponding Riemann problem. Later, Hsu-Lin-Markino  \cite{Hsu} establish the existence of global $L^\infty$ weak solutions with initial data containing the vacuum state. 
	For smooth solutions, Ruan-Zhu \cite{RuanZhu} first prove the global well-posedness of $C^1$ solutions with large data to the Cauchy problem for the (1+1)-dimensional relativistic Euler equations if the initial data do not have compression (see Definition \ref{def1}). For the (d+1)-dimensional ($\text{d} \geq 1$) relativistic fluids, Pan-Smoller \cite{PanSmoller} introduce  two sufficient conditions for the formation of singularities of smooth solutions: the initial data are compactly supported, or the radial component of the initial generalized momentum is sufficiently large.  However, for  the sufficient and necessary condition for singularity formation in $C^1$ solutions with large data, according to our discussion in Section \ref{section5},  due to lack of the lower bound estimate for the mass-energy density,  the problem has remained hitherto unexplored. The current paper addresses this  problem for the system \eqref{RE} in the $(1+1)$-dimensional spacetime setting, which  is a solid step in the study of relativistic Euler equations. Moreover,  for the $(3+1)$-dimensional relativistic fluids,  under the assumption that the initial mass-energy density vanishes in some open domain, we give two sufficient conditions for the regular solution to blow up in finite time, no matter how small the initial data are. Compared with the ones in \cite{PanSmoller},    we have removed the crucial assumptions that the initial data are compactly supported via an interesting observation for the multi-dimensional relativistic fluids: the invariance of the mass-energy density's  centroid.       We also do some interesting studies on the asymptotic behavior of the  relativistic velocity, which  tells us that  one can not obtain a global regular solution whose $L^\infty$ norm of $u$ decays to zero as time $t$ goes to infinity. Some interesting works on the study of the smooth solutions with vacuum for multi-dimensional relativistic fluids can be found in \cite{GengLi, Hadzic, Jang, Todd0, Todd1, Todd2,  ToddZhu1, ToddZhu2}.

	\vspace{3mm}

	Our paper is divided into 8 sections. In Section \ref{section2}, we introduce some basic notations and equations. In Section \ref{section3}, we state the main results.
	In Section \ref{section4}, for the smooth solutions with large data of  the 1-dimensional (classical) isentropic Euler equations  in Eulerian coordinates,  we present our novel idea for obtaining a time-dependent mass density   lower bound. The idea we present involves a careful study of the difference of the two Riemann invariants (and the study of certain weighted gradients of them). Naturally, why a new idea is needed in the first place is something that requires explanation. The reason is that in this problem, unlike the classical Euler equations, the introduction of Lagrangian coordinates cannot demystify the mathematical structure of the  system under study in the same, efficient way it did in \cite{ChenPanZhu, ChenPanZhu2, ChenChenZhu}. In other words, upon a thorough read of \cite{ChenPanZhu, ChenPanZhu2, ChenChenZhu}, one can see that the relation
	$$
	(1/\rho)_t=\frac{r_x+s_x}{2},
	$$
	where $r$ and  $s$ are the Riemann invariants of the so-called $p$-system in the Lagrange coordinate setting, is the cornerstone of the argument for obtaining the desired lower bound estimate. In the Eulerian setting this relation, or others of similar simplicity, are unavailable. Indeed, here one has
	$$
	(1/\rho)_t=-u (1/\rho)_x+(1/\rho)u_x,$$
	and therefore in order to get the mass density lower bound, i.e., $(1/\rho)'s$ upper bound, we should first have the upper bound of $-u (1/\rho)_x+(1/\rho)u_x$, which seems hard to obtain. 
	In Section \ref{section5}, combining an elaborate argument on a particular ODE inequality and introducing the crucial artificial quantity
	\be \label{Yquantity} \left( \frac{k \rho^{(\gamma-1)/2}/c}{\sqrt{1 + \frac{k^2  \rho^{\gamma-1}}{c^2}}}\right)^{\frac{3-\gamma}{2\gamma-2}}\big(1+ \frac{k^2 \rho^{\gamma-1}}{c^2}\big)^{\frac{\gamma+1}{4\gamma-4}} := \mathcal{Y},\ee 
	we apply our idea   to get a lower bound estimate for the  mass-energy density of  the (1+1)-dimensional relativistic Euler equations.  Ultimately, for $C^1$ solutions with large data and uniformly positive initial mass-energy density   of the (1+1)-dimensional relativistic Euler equations, we give a necessary and sufficient condition  for  the formation of singularities    in finite time.     	In Section \ref{section6},  we shift attention to (3+1)-dimensional relativistic fluids. Via introducing the particle number $n(\rho)$, we give a clear description on the time evolution of the vacuum boundary. Then we give  the proof for the corresponding two sufficient conditions of singularity formation.
	In Section \ref{section7}, we provide an extension of the 1-dimensional singularity formation results to more general pressure laws.  Finally, we include an appendix to show the related local-in-time well-posedness in multi-dimensional spacetime that is used in our paper.

	\section{Basic setup} \label{section2} \quad Before introducing the main results of this paper, we provide some equations and estimates for $C^1$ solutions of \eqref{RE}-\eqref{P} or \eqref{CE} with \eqref{P}, together with initial data
	\be
	(\rho,u)|_{t=0}=(\rho_0, u_0)(x) \label{initial1} \quad \text{for} \quad x\in \mathbb{R}^d,
	\ee
	for future reference, where $d=1$ or $3$.

	\subsection{(1+1)-dimensional Relativistic Euler equations}\label{subsection2.1}

	\smallskip

	\quad Let $d=1$ in \eqref{RE} and \eqref{initial1}.	
	We first define the $C^1$ solutions as follows:
	\begin{definition}\label{ 2.1-1} Let $T>0$ be some  time. The pair  $( \rho(t,x),u(t,x))$ is called a  $C^1$  solution to the relativistic Euler equations \eqref{RE}-\eqref{P} on $(0, T ) \times \mathbb{R}$  if
		\begin{equation*}\begin{split}
		&\rho>0, \quad \rho \in C^1([0, T )\times \mathbb{R}),\  u  \in C^1([0, T )\times \mathbb{R}),\end{split}
		\end{equation*}
		and the equations \eqref{RE}-\eqref{P} are satisfied in the pointwise sense on $(0,T) \times \mathbb{R}$. It is called a $C^1$ solution to the Cauchy problem \eqref{RE}-\eqref{P} with \eqref{initial1} if it is a $C^1$ solution to the equations \eqref{RE}-\eqref{P} on $(0, T ) \times \mathbb{R} $ and admits the initial data  \eqref{initial1} continuously. \end{definition}
	
	\par \noindent It is well-known\footnote{See \cite{RuanZhu} and the references cited therein.} that  there exists a  local-in-time $C^1$ solution $(\rho, u)$ in $[0,T] \times \mathbb{R}$   for some $T>0$, when 
	\begin{equation}\label{c1data}
	\inf_{x\in \mathbb{R}}\rho_0>0,\quad 	(\rho_0, \ u_0) \in C^1(\mathbb{R}).
	\end{equation}

	We proceed with a rudimentary analysis of the Cauchy problem \eqref{RE}-\eqref{P} with  \eqref{initial1}. First, the two  eigenvalues $\lambda_1$ and $\lambda_2$ of equations   \eqref{RE}-\eqref{P}   can be given by
	\be \label{lambda12} \lambda_1 = \frac{u- \sqrt{P'}}{1- \frac{u\sqrt{P'}}{c^2}}\qquad \text{and} \qquad  \lambda_2 = \frac{u+ \sqrt{P'}}{1+ \frac{u\sqrt{P'}}{c^2}}. \ee 
	We denote the directional derivatives as
	\be \label{lambda13} \prime = \partial_t + \lambda_1 \partial_x, \quad  \backprime = \partial_t +\lambda_2 \partial_x\ee
	along two characteristic directions
	\be \label{lambda14} 
	\frac{dx^1}{dt}=\lambda_1\quad \text{and} \quad \frac{dx^2}{dt}=\lambda_2,
	\ee
	respectively and introduce the corresponding Riemann variables
	\begin{gather}
	w = \frac{c}{2}\hspace{.5mm} \text{ln}\Big(\frac{c+u}{c-u}\Big) + \int_0^\rho \frac{\sqrt{P'(\sigma)}}{\sigma + \frac{P(\sigma)}{c^2}} \text{d}\sigma, \label{ru}\\[8pt]
	\displaystyle
	z=  \frac{c}{2}\hspace{.5mm} \text{ln}\Big(\frac{c+u}{c-u}\Big) - \int_0^\rho \frac{\sqrt{P'(\sigma)}}{\sigma + \frac{P(\sigma)}{c^2}} \text{d}\sigma.\label{srho}
	\end{gather} 
	Then,  it is easy to see that $w$ and $z$ satisfy
	\be  \label{REODE}    w^\backprime = 0 \quad \text{and} \quad z^\prime = 0. \ee
	
	\par \noindent Let $h_1$ and $h_2$ be functions satisfying 
	
	\be    \label{h1h2def}   h_{1w} = \frac{\lambda_{1w}}{\lambda_1 - \lambda_2}, \hspace{2mm} h_{2z} = \frac{\lambda_{2z}}{\lambda_2 - \lambda_1}. \ee  Define $\alpha = z_x, \beta = w_x$ and introduce
	
	\be \label{zwode} \xi= \e^{h_1}\alpha, \hspace{2mm} \zeta= \e^{h_2} \beta. \ee

	\par \noindent 	We continue with a simplification for $s(\rho)$,  as can be found for example in \cite{Chen},
	
	\be \label{srho11} s(\rho)=  \frac{2c \sqrt{\g}}{\g-1} \text{\text{Arctan}}\big( \frac{k \rho^{(\g-1)/2}}{c}\big).        \ee 
	
	\par \noindent	Another important calculation is the following expression for $\sqrt{P'(\rho)}$ in terms of the Riemann invariants:
	
	\be \label{Pprimeformularel} \sqrt{P'(\rho)} = k \sqrt{\gamma} \rho^{(\gamma-1)/2 } = c\sqrt{\gamma}\hspace{1mm} \text{\text{Tan}}\Big( \frac{(w-z)(\g-1)}{4c \sqrt{\g}} \Big). \ee

	Finally, we define the compression and rarefaction characters.
	\begin{definition}
		\label{def1}
		The local {$R/C$} character for a classical solution of \eqref{RE}-\eqref{P} with  \eqref{initial1} is
		\[\begin{array}{lllll} 
		\text{Forward}& \ \  $R$\ \  \text{iff} \ \ w_x>0;\quad \text{Forward} \ \ \ $C$ \ \ \text{iff} \ \ w_x<0;\\
		\text{Backward}& \ \ $R$ \ \ \text{iff}\ \ z_x>0;\quad \text{Backward} \ \  $C$ \ \ \text{iff} \ \ z_x<0.
		\end{array}\]
	\end{definition}
	Although this definition was not clearly provided in Lax \cite{Lax},
	his result on some cases of $2\times2$ hyperbolic conservation laws
	can be explained as follows: a singularity forms in finite time if and only if
	there exists some backward or forward compression under Definition \ref{def1}.
	According to the results obtained in this paper, we  see that  this definition of compression and rarefaction
	gives a clean cut on the singularity formation.
	
	\subsection{1-dimensional classical compressible  Euler equations}\label{subsection2.2}
	
	\quad Let $d=1$ in \eqref{CE} and \eqref{initial1}.
	To make the corresponding statement precise, we first define the $C^1$ solutions as follows:
	\begin{definition}\label{ 2.1-2} Let $T>0$ be some  time. The pair $( \rho(t,x),u(t,x))$ is called a $C^1$  solution to the non-relativistic Euler equations \eqref{CE} with \eqref{P} on $(0, T ) \times \mathbb{R}$  if
		\begin{equation*}\begin{split}
		&\rho>0, \quad \rho \in C^1([0, T )\times \mathbb{R}),\  u  \in C^1([0, T )\times \mathbb{R}),\end{split}
		\end{equation*}
		and the equations \eqref{CE} with \eqref{P} are satisfied pointwise on $(0,T) \times \mathbb{R}$. It is called a $C^1$ solution to the Cauchy problem \eqref{CE} with \eqref{P}and \eqref{initial1} if it is a $C^1$ solution to the equations \eqref{CE} with \eqref{P} on $(0, T ) \times \mathbb{R} $ and admits the initial data  \eqref{initial1}      continuously. \end{definition}
	
	As in the relativistic case, it is well-known that  there exists a  local-in-time $C^1$ solution $(\rho, u)$ in $[0,T] \times \mathbb{R}$   for some $T>0$, when  (\ref{c1data}) is satisfied.
	We proceed with a rudimentary analysis of the Cauchy problem \eqref{CE} with \eqref{P} and \eqref{initial1}. First, the two  eigenvalues $\tilde{\lambda}_1 $ and $\tilde{\lambda}_2$ of equations   \eqref{CE} with \eqref{P}   can be given by
	
	\be \label{lambdat12} \tilde{\lambda}_1 = u- \sqrt{P'}, \hspace{2mm} \tilde{\lambda}_2 = u + \sqrt{P'}. \ee
	We denote the directional derivatives as
	\be \label{lambdat13} \partial_-= \partial_t + \tilde{\lambda}_1\partial_x, \quad  \partial_+= \partial_t +\tilde{\lambda}_2 \partial_x \ee
	along two characteristic directions
	\be \label{lambdat14} 
	\frac{dy^1}{dt}=\tilde{\lambda}_1\quad \text{and} \quad \frac{dy^2}{dt}=\tilde{\lambda}_2,
	 \ee
	respectively, and introduce the corresponding Riemann variables
	\begin{gather}
	\displaystyle
	 \tilde{w}= u + \int_0^\rho \frac{\sqrt{P'(\sigma)}}{\sigma}\hspace{.5mm} \text{d}s, \hspace{2mm} \tilde{z}= u - \int_0^\rho \frac{\sqrt{P'(\sigma)}}{\sigma}\hspace{.5mm} \text{d}\sigma.
	\end{gather} 
	Then, it is easy to see that $\tilde{w}$ and $\tilde{z}$ satisfy
	\be  \label{CEODE}   \partial_+\tilde{w} = 0 \quad \text{and} \quad \partial_-\tilde{z} = 0. \ee 
	
	\par \noindent Let $\tilde{h}_1, \tilde{h}_2$ be functions satisfying
	
	\be \label{hfunctionclas} \tilde{h}_{1\tilde{w}} = \frac{\tilde{\lambda}_{1\tilde{w}}}{\tilde{\lambda}_1- \tilde{\lambda}_2}, \hspace{2mm} \tilde{h}_{2\tilde{z}} = \frac{\tilde{\lambda}_{2 \tilde{z}}}{\tilde{\lambda}_2- \tilde{\lambda}_1}.\ee 
	Define $\tilde{\alpha} = \tilde{z}_x, \hspace{.5mm} \tilde{\beta}= \tilde{w}_x$ and introduce
	
	\be \label{phipsi}\phi = \e^{\tilde{h}_1} \tilde{\alpha} , \hspace{2mm} \psi= \e^{\tilde{h}_2}  \tilde{\beta}.         \ee

	Finally, we define the compression and rarefaction characters.
	\begin{definition}
		\label{def1CE}
		The local {$R/C$} character for a classical solution of \eqref{CE} with \eqref{P} and \eqref{initial1} is
		\[\begin{array}{lllll} 
		\text{Forward}& \ \ $R$ \ \  \text{iff}  \ \  \tilde{w}_x>0; \ \  \text{Forward} \ \ \ $C$ \ \  \text{iff} \ \ \tilde{w}_x<0;\\
		\text{Backward}&  \ \  $R$  \ \  \text{iff} \ \  \tilde{z}_x>0;\ \ \text{Backward} \ \ $C$ \ \ \text{iff} \ \ \tilde{z}_x<0.
		\end{array}\]
	\end{definition}
	
	\par \noindent According to the results obtained \cite{ChenPanZhu, ChenPanZhu2, ChenChenZhu}, we know that  this definition on the compression and rarefaction
	gives a clean cut on the singularity formation.	
	\subsection{(3+1)-dimensional Relativistic Euler equations with vacuum}

	\quad Let $d=3$ in \eqref{RE} and \eqref{initial1}.	To make the corresponding statement precise, we give the definition of regular  solutions considered in this paper:
	\begin{definition}\label{d1}
		Let $T> 0$ be some time. The pair $(\rho(t,x),u(t,x))$ is called a regular solution in $ [0,T]\times \mathbb{R}^3$  to the  Cauchy problem  (\ref{RE})-(\ref{P}) with \eqref{initial1}  if $(\rho,u)$ satisfies this problem in the sense of distributions and:
		\begin{equation*}\begin{split}
		&(\textrm{A})\quad  \rho\geq 0, \  (\rho,u)\in C^1([0,T]\times \mathbb{R}^3);\\[2pt]
		&(\textrm{B})\quad |u|<c,\quad  \sqrt{P'(\rho)}<c;\\[2pt]
		&(\textrm{C})\quad u_t+u\cdot\nabla u =0\quad  \text{whenever} \quad  \rho(t,x)=0.
		\end{split}
		\end{equation*}
	\end{definition}

	\vspace{3mm}
	
	Let $x(t;x_0)$ be the particle path starting from $x_0$ at $t=0$, i.e.,
	\begin{equation}\label{gobn}
	\frac{d}{\text{d}t}x(t;x_0)=u(t,x(t;x_0)),\quad x(0;x_0)=x_0.
	\end{equation}In the rest of this section, we will use the following useful physical quantities in some domain $\Omega \subset \mathbb{R}^3$:
	\begin{align*}
	&m(t)=\int_{\Omega}\hat{\rho}(t,x)\text{d}x \quad \textrm{(total energy)},\\[2pt]
	&X^*(t)=\frac{\int_{\Omega} \hat{\rho} x \hspace{.5mm}\text{d}x}{m(t)}\ (\textrm{centroid} \ \textrm{of} \  \Omega),\\[2pt]	
	&M(t)=\int_{\Omega}\hat{\rho}(t,x)|x|^2\text{d}x \quad \textrm{(second moment)},\\[2pt]	
	&F(t)=\int_{\Omega} \tilde{\rho}(t,x)u(t,x)\cdot x \hspace{.5mm} \text{d}x \quad \textrm{ (radial component of momentum)},\\[2pt]
	&\mathbb{P}(t)=\int_{\Omega} \tilde{\rho}(t,x)u(t,x) \hspace{.5mm} \text{d}x \quad \textrm{(momentum)},
	\end{align*}
	where $\hat{\rho}$ and $\tilde{\rho}$ are given by 
	$$
	\hat{\rho}= \frac{\rho + P \lvert u\rvert^2/c^4}{1-\lvert u\rvert ^2/c^2}=\frac{1}{c^2}\tilde{\rho}\lvert u\rvert^2+\rho  \quad \text{and} \quad \tilde{\rho}=\frac{(\rho+ P/c^2)}{1-\lvert u\rvert ^2/c^2}=\hat{\rho}+\frac{P}{c^2}.$$
	
	Based on the  physical quantities introduced  in this subsection,  we define  one solution classe as follows:
	\begin{definition}\label{d2}
		Let $T>0$ be some time and $\Omega=\mathbb{R}^3$ in the definitions of $m(t)$ and $\mathbb{P}(t)$. The pair $(\rho(t,x),u(t,x))$    is said to be in  the solution class $ D(T)$  of   the Cauchy problem  \eqref{RE}-\eqref{P} with  \eqref{initial1}        if  $(\rho,u)$  satisfies this problem in the sense of distributions and 
		\begin{equation*}\begin{split}
		&(\textrm{A})\quad	\rho \geq 0, \ \ \  (\rho,u) \in C^1([0, T )\times \mathbb{R}^3);\\[2pt]	   
		&(\textrm{B})\quad |u|<c,\quad  \sqrt{P'(\rho)}<c; \\[2pt]	         	
		&(\textrm{C})\quad \text{Conservation of total energy:}\quad 0<m(0)=m(t)<\infty \ \text{for  any}  \ t\in [0,T];\\[2pt]
		&(\textrm{D})\quad \text{Conservation of momentum:}\quad   0<|\mathbb{P}(0)|=|\mathbb{P}(t)|<+\infty \ \text{for  any} \ \ t\in [0,T].
		\end{split}
		\end{equation*}
	\end{definition}


The corresponding local-in-time well-posedness of smooth solutions defined  in Definitions \ref{d1} and \ref{d2} has   been established  by  Lefloch-Ukai \cite{LeflochUkai} (see also our Appendix).

	\section{Statement of  main results}\label{section3}
	
	\qquad In this section, we will state our main results in the following two subsections.
	\subsection{(1+1)-dimensional case}
	
	\qquad   Let $d=1$ in \eqref{RE} and \eqref{initial1}.  From now on, we make the following assumption throughout the rest of this paper:
	\begin{assumption}\label{p-assumption} Assume that\footnote{For the  polytropic gas, the condition $\sqrt{P^\prime\left(\Jrel^{-1}\left(\frac{w_{\max}-z_{\min}}{2}\right)\right)} < c$ can be read as 	$$
			w_{\text{max}} - z_{\text{min}} < \frac{4 c \sqrt{\gamma}}{\gamma-1} \text{\text{Arctan}}\big( \frac{1}{\sqrt{\g}} \big).$$ } 
		\begin{equation*}\begin{split}
		&\inf_{x\in \mathbb{R}}(w_0 - z_0)>0,\quad \sqrt{P^\prime\left(\Jrel^{-1}\left(\frac{w_{\max}-z_{\min}}{2}\right)\right)} < c,\\		
		& (w_0, \ z_0) \in C^1(\mathbb{R}),\quad \|(w_0, z_0)\|_{C^1(\mathbb{R})}\le M_0, 
		\end{split}
		\end{equation*}	
		for some  constant $M_0>0$, where 
		\begin{equation*}\begin{split} &(w_0,  z_0)(x)=(w(0,x),  z(0,x)),\quad   \Jrel(x) = \int_0^x \frac{\sqrt{P'(\sigma)}}{\sigma+ \frac{P(\sigma)}{c^2}}\text{d}s,\\
		&w_{\text{max}}= \sup \{w_0(x) \hspace{1mm} \mid x \in \mathbb{R} \}, \hspace{1mm} z_{\text{min}}= \inf \{ z_0(x) \hspace{1mm} \mid x \in \mathbb{R} \},
		\end{split}\end{equation*}
 and $\Jrel^{-1}$ denotes the  inverse function of $\Jrel(x)$.

	\end{assumption}

	\begin{theorem}\label{1RE}
		For polytropic gas $\gamma>1$ in \eqref{P}, if $(w_0(x), z_0(x))$ satisfy  the conditions in Assumption \ref{p-assumption}, then the Cauchy problem \eqref{RE}-\eqref{P} with  \eqref{initial1} has a unique 
		global-in-time $C^1$ solution if and only if 
		\be\label{p_lemma_con}
		w_x(x,0)\geq0 \ \ \text{and}\ \   z_x(x,0)\geq 0,\ \text{for all} \ \ x\in\mathbb{R}\,.
		\ee
	\end{theorem}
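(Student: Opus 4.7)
The plan is to exploit the Riccati-type ODEs satisfied by the weighted gradients $\xi = \e^{h_1}\alpha$ and $\zeta = \e^{h_2}\beta$ along the characteristic directions $\prime$ and $\backprime$. Differentiating $z^\prime=0$ in $x$ and using the choice \eqref{h1h2def} of $h_1$ to cancel the cross term involving $w_x$, one obtains an equation of the form $\xi^\prime = -\lambda_{1z}\e^{-h_1}\xi^2$, and symmetrically $\zeta^\backprime = -\lambda_{2w}\e^{-h_2}\zeta^2$. For the polytropic law with $\gamma>1$, a direct computation from \eqref{lambda12} and \eqref{Pprimeformularel} shows that $\lambda_{1z}$ and $\lambda_{2w}$ have a fixed positive sign (genuine nonlinearity), so these are honest Riccati equations whose sign convention produces the classical dichotomy: initially negative values blow up to $-\infty$ in finite time, while initially non-negative values decay monotonically toward $0$.

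For the \emph{only if} direction, assume global $C^1$ existence and suppose for contradiction that $w_x(x_*,0)<0$ at some point $x_*$ (the case $z_x<0$ is entirely symmetric). Then $\zeta(0,x_*)<0$, and along the $2$-characteristic emanating from $(0,x_*)$ the Riccati equation gives $(1/\zeta)^\backprime = \lambda_{2w}\e^{-h_2}$. Provided the right-hand side admits a strictly positive lower bound on any putative $C^1$ lifespan, this integrates to force $1/\zeta\to 0^-$, i.e.\ $\zeta\to-\infty$, in finite time. The required lower bound follows from an a priori upper bound on $\rho$: since $w$ and $z$ are constant along their characteristics by \eqref{REODE}, $w\le w_{\text{max}}$ and $z\ge z_{\text{min}}$ uniformly, so $w-z$ is uniformly bounded above; combined with \eqref{Pprimeformularel} and Assumption \ref{p-assumption}, this yields $\rho\le\bar C$ and $\sqrt{P'(\rho)}<c$ uniformly, hence uniform control of the eigenvalues and of $h_1,h_2$. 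The resulting finite-time blow-up of $\zeta$, and thus of $w_x$, contradicts global $C^1$ regularity.

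For the \emph{if} direction, assume $w_x(\cdot,0)\ge 0$ and $z_x(\cdot,0)\ge 0$. The same Riccati equations propagate non-negativity of $\xi$ and $\zeta$ along their respective characteristics by a standard continuity argument, and since $\xi^\prime\le 0$ whenever $\xi\ge 0$, one obtains the uniform a priori bound $0\le \xi(t,x)\le \sup_{y}\xi(0,y)$, with the analogous bound for $\zeta$. Combined with the upper bound on $\rho$ (hence on $\e^{-h_1}$ and $\e^{-h_2}$), this yields a priori $C^0$ bounds on $w_x$ and $z_x$. The only remaining obstruction to extending the $C^1$ solution globally is the possible degeneration $\rho\to 0$, at which the characteristic speeds coalesce and the weights $h_1,h_2$ could blow up.

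The hard part, and the principal novelty of the theorem, is therefore to produce a strictly positive (possibly $t$-dependent) lower bound on $\rho$. A naive attempt to push $\inf(w_0-z_0)>0$ forward in time into a uniform lower bound on $w(t,x)-z(t,x)$ fails, because at time $t$ the values of $w$ and $z$ at the same spatial point come from two \emph{different} foot points of the two characteristic families. The remedy, as developed in Section \ref{section5}, is to derive and analyse a coupled ODE inequality for the artificial quantity $\mathcal{Y}$ defined in \eqref{Yquantity} together with the non-negative $\xi,\zeta$; an elaborate argument on this system should yield a polynomial-in-$t$ lower bound for $\rho$. With that lower bound in hand, all a priori bounds required by the standard continuation principle are available, and global $C^1$ existence follows.
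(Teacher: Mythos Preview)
Your Riccati framework and the role of $\xi,\zeta$ are correct, and your argument for the ``only if'' direction is essentially the paper's argument when $\gamma\ge 3$. But for the physically relevant range $1<\gamma<3$ there is a genuine gap, and it stems from a misplacement of the key lower-bound estimate.

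You claim that a strictly positive lower bound on $\lambda_{2w}\e^{-h_2}$ (and $\lambda_{1z}\e^{-h_1}$) ``follows from an a priori upper bound on $\rho$''. This is false when $1<\gamma<3$. The paper's explicit formula \eqref{explicitformula1} shows
\[
\e^{-h_1}\lambda_{1z}\;=\;\mathcal{H}(\rho,u)\cdot \mathcal{Y},\qquad \mathcal{Y}=\Big(\tfrac{y}{\sqrt{1+y^2}}\Big)^{\frac{3-\gamma}{2\gamma-2}}(1+y^2)^{\frac{\gamma+1}{4\gamma-4}},\quad y=\tfrac{k\rho^{(\gamma-1)/2}}{c},
\]
where $\mathcal{H}$ is uniformly bounded above and below by the upper bounds on $\rho$ and $|u|$, but $\mathcal{Y}\sim \rho^{(3-\gamma)/4}\to 0$ as $\rho\to 0$ for $1<\gamma<3$. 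Hence an upper bound on $\rho$ gives no lower bound on the Riccati coefficient; without further information $\int_0^\infty \e^{-h_1}\lambda_{1z}\,dt$ could converge and the contradiction you seek does not materialize. Equivalently, in your computation of $h_1,h_2$ the term $\frac{\gamma-3}{2\gamma-2}\ln(\mathrm{Sin}\,Y)$ blows up to $+\infty$ as $\rho\to 0$, so ``uniform control of $h_1,h_2$'' does not follow from the density upper bound alone.

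This is precisely the obstacle the paper is written to overcome, and the $\mathcal{Y}$ analysis you defer to the \emph{if} direction is in fact the engine of the \emph{only if} direction. The paper's route is: use the uniform bound $\zeta\le Q_2$ (which holds regardless of initial sign) in $(w-z)'=(\lambda_1-\lambda_2)\e^{-h_2}\zeta$ to derive an ODE inequality for $y$ along backward characteristics, then transfer it to a Riccati inequality $\mathcal{Y}'\ge -C_g\,\mathcal{Y}^2$; this yields $\mathcal{Y}(t)\ge (C_1+C_2 t)^{-1}$, hence $\int_0^\infty \e^{-h_1}\lambda_{1z}\,dt=\infty$, which forces the blow-up of $\xi$ whenever $\xi(0,\cdot)<0$ somewhere. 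Your proposal should relocate the $\mathcal{Y}$ argument to this step. The \emph{if} direction (pure rarefaction $\Rightarrow$ global existence) is not the novelty here; the paper attributes it to Ruan--Zhu.
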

	
	For more general pressure laws $P(\rho)$, 	first we give the following two assumptions:
	
	\begin{assumption}\label{assumption2}
	\be \begin{split} &P(\rho) >0, \quad  P'(\rho) > 0, \quad  P''(\rho) > 0,\\
	& P(0)=0, \quad \lim_{\rho\rightarrow \infty}\int_0^\rho \frac{\sqrt{P'(\sigma)}}{\sigma + \frac{P(\sigma)}{c^2}}\hspace{.5mm} d\sigma<\infty.
		\end{split}
	\ee
	\end{assumption}
	
	\begin{assumption}\label{assumption3}
		There exists a positive constant $A$ such that, for all $\rho>0$, there holds 
		
		\be \rho^8 \big( (5+A)P''(\rho)^2 - 4 P'(\rho) P'''(\rho)\big) + (4A-4) \big( \rho^6 P'(\rho)^2 + \rho^7 P'(\rho) P''(\rho) \big) \geq 0. \ee
	\end{assumption}	
	
	Now we can state the following theorem:
	
	\begin{theorem}\label{2RE}
		For a general pressure law $P(\rho)$ satisfying Assumptions \ref{assumption2}-\ref{assumption3}, if $(w_0(x), z_0(x))$ satisfy  the conditions in Assumption \ref{p-assumption}, then the Cauchy problem \eqref{RE}-\eqref{P} with  \eqref{initial1} has a unique 
		global-in-time $C^1$ solution if and only if 
		\be\label{p_lemma_con}
		w_x(x,0)\geq0 \ \ \text{and}\ \   z_x(x,0)\geq 0,\ \text{for all} \ \ x\in\mathbb{R}\,.
		\ee
		Here, the corresponding definitions of $(w,z)$ under a general pressure law can be found in Section 7.2.
	\end{theorem}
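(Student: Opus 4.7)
The plan is to follow the scheme already laid out for the polytropic case (Theorem 3.1), but with the Riemann invariants, the characteristic speeds $\lambda_1, \lambda_2$, and the $h_1, h_2$ functions reconstructed for a general $P(\rho)$ as indicated in Section 7.2. First I would verify that Assumption \ref{assumption2} suffices to make the basic apparatus of Subsection \ref{subsection2.1} well-defined: strict positivity and monotonicity of $P$ give strict hyperbolicity ($\lambda_1<\lambda_2$) and convexity in $\rho$ ensures genuine nonlinearity of each characteristic field, while the convergence of the integral in \eqref{ru}-\eqref{srho} guarantees that $w,z$ stay bounded as long as $\rho$ does, which is what permits Assumption \ref{p-assumption} to control $\sqrt{P'(\rho)}<c$ globally in time.

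\textbf{Necessity.} This direction is essentially insensitive to the particular form of $P$. Assuming the existence of a global $C^1$ solution, I would write the Riccati-type equations satisfied by $\xi=\e^{h_1}\alpha$ and $\zeta=\e^{h_2}\beta$ along the $\lambda_1$- and $\lambda_2$-characteristics respectively (these have the schematic form $\xi'=a(w,z)\xi^2$ with $a$ of a definite sign by genuine nonlinearity). If either $w_x(0,\cdot)$ or $z_x(0,\cdot)$ is negative at some point, then the corresponding $\xi$ or $\zeta$ satisfies a Riccati inequality that blows up in finite time, yielding the desired contradiction. Apart from needing an \emph{a priori} upper bound on $\rho$ along the characteristic to keep the coefficients bounded, the argument is a verbatim adaptation of the polytropic case.

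\textbf{Sufficiency.} The hard direction is to prove that, under the no-compression condition $w_x(0,\cdot)\ge 0$, $z_x(0,\cdot)\ge 0$, the $C^1$ solution extends globally. The standard continuation argument reduces this to uniform-in-time \emph{a priori} bounds on (i) $w,z$, (ii) $\xi,\zeta$, and (iii) most delicately, a time-dependent lower bound for $\rho$. Item (i) is immediate from \eqref{REODE} together with Assumption \ref{p-assumption}. For (iii), I would imitate the strategy of Section \ref{section5}: introduce an artificial quantity $\mathcal{Y}=\mathcal{Y}(\rho)$ (the analogue of \eqref{Yquantity}) constructed so that, when combined with $w-z$, it satisfies a closed ODE inequality along characteristics, and then exploit the sign-preservation of $\xi,\zeta$ (which follows from (ii) via a bootstrap in the no-compression regime) to drive a Grönwall-type lower bound of the form $\rho(t,x)\ge C(1+t)^{-\kappa}$ for some $\kappa>0$.

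\textbf{Main obstacle and role of Assumption \ref{assumption3}.} The crux is the construction and closure of this ODE inequality for general $P(\rho)$. In the polytropic case the exponents in $\mathcal{Y}$ are chosen so that the cross terms arising when one computes $\mathcal{Y}'$ and $\mathcal{Y}^\backprime$ combine with $(w-z)$ derivatives to yield a sign-definite right-hand side; for a general $P$ the corresponding coefficient is a specific polynomial in $P'(\rho), P''(\rho), P'''(\rho)$ and powers of $\rho$. Assumption \ref{assumption3} is precisely the requirement that this polynomial have a definite sign with slack parameter $A$, and one checks that in the polytropic case $P=k^2\rho^\gamma$ it reduces to an inequality automatically satisfied for $\gamma>1$. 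Therefore the plan is: compute $\mathcal{Y}', \mathcal{Y}^\backprime$ in terms of $\lambda_{1,2}$, $h_{1,2}$ and the derivatives of $P$; isolate the quadratic form in $(P', P'', P''')$ whose nonnegativity is required; verify that this is exactly the content of Assumption \ref{assumption3}; and then run the remainder of the Section \ref{section5} argument unchanged. The hardest book-keeping step will be identifying the precise exponents in the general-$P$ analogue of \eqref{Yquantity} that reproduce the miraculous cancellation in the polytropic computation.
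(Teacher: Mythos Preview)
Your proposal misidentifies which direction carries the main difficulty. In the Riccati formula
\[
\frac{1}{\xi(t,x^1(t,x^1_0))}=\frac{1}{\xi(0,x^1_0)}+\int_0^t \big(\e^{-h_1}\lambda_{1z}\big)(\sigma,x^1(\sigma,x^1_0))\,d\sigma,
\]
blow-up of $\xi$ from $\xi(0,x^1_0)<0$ occurs \emph{only if} the integral on the right diverges as $t\to\infty$. That divergence is precisely what requires a time-dependent lower bound on $\e^{-h_1}\lambda_{1z}$, hence on the mass-energy density. So the density lower bound is the heart of the blow-up direction (your ``Necessity''), not of the global-existence direction (your ``Sufficiency''); the latter is the easy half, since $\xi(0)\ge 0$ traps $\xi$ in $[0,\xi(0)]$ by the same Riccati equation, and for the polytropic law the paper simply defers it to Ruan--Zhu. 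Your claim that the blow-up direction is a ``verbatim adaptation'' needing only an upper bound on $\rho$ therefore hides the entire content of the theorem.

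Even after correcting this, your plan of constructing a general-$P$ analogue of the polytropic quantity $\mathcal Y$ by searching for the right exponents is not what the paper does, and is likely harder than necessary. The paper proceeds instead by a two-stage comparison. First (Section~7.1) it treats the \emph{classical} Euler system with general $P$, where the key quantity is not guessed but dictated by the structure of $\e^{-\tilde h}\tilde\lambda_{1\tilde z}$ itself:
\[
G(\rho)=G_1(\rho)+G_2(\rho)=\tfrac12\,P'(\rho)^{-1/4}+\tfrac14\,\rho\,P''(\rho)\,P'(\rho)^{-5/4}.
\]
Assumption~\ref{assumption3} is then exactly the inequality that bounds $G'(\rho)$ above by a multiple of $(\rho P''+2P')^2/(P'^{9/4}\rho)$, which after multiplying by the transport inequality for $\rho$ closes the Riccati-type bound $\partial_- G\ge -C_g\,G^2$ and yields $G(\rho)\ge (C_1+C_2 t)^{-1}$. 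Second (Section~7.2), for the relativistic system the paper computes $\e^{-h_1}\lambda_{1z}\approx \exp(\mathcal I_1)\,(H_1(\rho)+H_2(\rho))$, shows $H_1+H_2\approx G_1+G_2$ and $\exp(\mathcal I_1)\ge C_g$ times the classical exponential factor, and checks that the relativistic transport inequality for $\rho$ implies the classical one. This reduces the divergence of the relativistic integral to the classical result already in hand. There is thus no ``miraculous cancellation'' to reproduce: the artificial quantity is simply the coefficient $\e^{-\tilde h}\tilde\lambda_{1\tilde z}$ itself, and the relativistic case is handled by comparison with the classical one rather than by a fresh construction.
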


	The proof of Theorems \ref{1RE}-\ref{2RE} can be found in Sections  \ref{section5} and \ref{section7} respectively.  We make some necessary remarks on our conclusions at this point:
	\begin{remark}
		The  assumption 
	$\sqrt{P^\prime\left(\Jrel^{-1}\left(\frac{w_{\max}-z_{\min}}{2}\right)\right)} < c$
		is imposed for one to be able to show  that the local sound speed $\sqrt{P'(\rho)}$ is bounded away from the light speed $c$ for  $C^1$ solutions, which is natural in the sense of physics.
	\end{remark}
	\begin{remark}
		Our conclusion establishes a necessary and sufficient condition for the formation of singularities for the $(1+1)-$dimensional relativistic Euler equations, namely the existence of compression in the initial data, which gives a complete picture on the formation of singularities for the Cauchy problem of the system $\eqref{RE}$ in $(1+1)$-dimensional Minkowski spacetime.
	\end{remark}
	\begin{remark}
		A question of physical significance is to determine the type of singularity that forms. It is generally expected that the discontinuities developed, among others, in the works of Chen-Pan-Zhu \cite{ChenPanZhu}, Lax \cite{Lax} and the current work are indeed discontinuity singularities, i.e. shock waves. The proof of disproof of such a fact, in full generality, remains an open problem. The best partial results known to the authors, however can be found in Kong \cite{Kong}. There, for a general class of strictly hyperbolic $2\times 2$ systems with two genuinely nonlinear characteristic fields, it is shown that if a singularity forms then it develops as a shock wave if either
		
		\begin{itemize}
			\item one of the two Riemann invariants, w or z, is initially a constant\footnote{Thus the study of the system reduces to the study of a scalar conservation law, which in general has a complete theory. See also Lebaud \cite{Lebaud} for the 1-D classical Euler equations.}, or
			\item certain a priori conditions, essentially quantitative bounds on the size of the derivatives, hold at the blow-up point; conditions which are, however, difficult to verify.
		\end{itemize}Thus, our theorem above along with Kong's result readily implies that, if one of the two initial data variables $w_0, z_0$ is a constant, a shock forms if and only if there exists initial compression in the non-constant Riemann invariant variable.  In any case, further study on the type of singularities obtained in this work and several others promises to be a meaningful and interesting direction for research.
	\end{remark}
	
	
	
	
	\subsection{(3+1)-dimensional case}

	\qquad   Let $d=3$ in \eqref{RE} and \eqref{initial1}.  We present two scenarios for singularity development in finite time from initial data with vacuum in some local domain.  The first one is the  isolated mass group:
	
	\begin{definition}[\textbf{Isolated mass group}]\label{local}The initial data pair $(\rho_0(x),u_0(x))$ is said to have an isolated mass group $(A_0,B_0)$ if there exist two smooth,  bounded and connected open sets  $A_0 \subset \mathbb{R}^3$ and $B_0\subset \mathbb{R}^3$ satisfying
		\begin{equation} \label{eq:12131}
		\begin{cases}
		\displaystyle
		\overline{A}_0 \subset B_0 \subseteq  B_{R_0} \subset \mathbb{R}^3;         \\[10pt]
		\displaystyle
		\rho_0(x)=0,\ \forall \  x\in B_0 \setminus A_0, \quad \int_{A_0} \rho_0(x) \text{d}x>0; \\[10pt]
		\displaystyle
		u_0(x)|_{\partial A_0}=\overline{u}_0,\quad \int_{A_0} \tilde{\rho}(0,x)u_0 \hspace{.5mm} \text{d}x=0,
		\end{cases}
		\end{equation}
		for some positive constant $R_0$ and  constant vector $\overline{u}_0 \in \mathbb{R}^3$, where $B_{R_0}$ is  the ball centered at the origin with radius $R_0$.
	\end{definition}

	\vspace{3mm}
	
	Our first blowup result shows that the existence of an isolated mass group in the initial data guarantees the finite time singularity formation of regular solutions.
	\begin{theorem}[\textbf{Blow-up by isolated mass group}]\label{coo2}\
		If the initial data $(\rho_0,u_0)(x)$ have an isolated mass group $(A_0,B_0)$,
		then the  regular solution $(\rho, u)(t,x)$ in $[0, T_m]\times\mathbb{R}^3$  defined  in Definition \ref{d1} with maximal existence time $T_m$  to  the Cauchy problem  \eqref{RE}-\eqref{P} and \eqref{initial1}    blows up in finite time, i.e.,
		$\
		T_m<+\infty.
		$
	\end{theorem}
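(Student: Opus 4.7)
The plan is to argue by contradiction. Suppose the regular solution from Definition \ref{d1} persists beyond every finite time; the aim is to derive a contradiction by exploiting two rigidity properties of the isolated mass group, namely the rigid translation of its convected domain and the invariance of its localized mass-energy centroid.

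I would first track the image $A(t) := \{x(t;x_0) : x_0 \in A_0\}$ of $A_0$ under the particle flow $\dot{x}(t;x_0) = u(t,x(t;x_0))$ with $x(0;x_0)=x_0$. Since $\partial A_0 \subset B_0 \setminus A_0$ is an initial vacuum interface and vacuum is transported along the $C^1$ flow, every trajectory emanating from $\partial A_0$ stays at vacuum throughout the regular existence interval. Condition (C) of Definition \ref{d1} then reduces the momentum equation along such a trajectory to $\tfrac{d}{dt} u(t,x(t;x_0)) = u_t + u\cdot\nabla u = 0$, so these boundary particles travel in straight lines with their common initial velocity $\overline{u}_0$. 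Combining injectivity of the $C^1$ flow with the connectedness of $A_0$ identifies $A(t) = A_0 + \overline{u}_0 t$ on the regular lifespan, and the diffeomorphism property keeps the surrounding vacuum collar (the image of $B_0\setminus A_0$) as a vacuum neighborhood of $\partial A(t)$.

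Next I would localize the conservation laws to the moving domain $A(t)$. Applying the Reynolds transport theorem with boundary velocity $\overline{u}_0 = u|_{\partial A(t)}$, the divergence form of \eqref{RE}, and the vanishing of $\hat{\rho}$, $\tilde{\rho}$, and $P$ on the vacuum boundary $\partial A(t)$, all flux terms cancel and one obtains
\[
\tfrac{d}{dt}\int_{A(t)} \hat{\rho}\,\text{d}x = 0, \qquad \tfrac{d}{dt}\int_{A(t)} \tilde{\rho}\, u\,\text{d}x = 0.
\]
Hence $m_{\text{grp}}(t) \equiv m_{\text{grp}}(0) > 0$, and the last normalization in Definition \ref{local} yields $\mathbb{P}_{\text{grp}}(t) \equiv 0$. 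A parallel computation against the coordinate functions $x_i$, using $\hat{\rho}_t = -\nabla\cdot(\tilde{\rho}u)$ followed by one integration by parts, produces
\[
\tfrac{d}{dt}\int_{A(t)} \hat{\rho}\, x\,\text{d}x = \int_{A(t)} \tilde{\rho}\, u\,\text{d}x = 0,
\]
so the centroid $X^*_{\text{grp}}(t)$ is frozen at $X^*_{\text{grp}}(0) \in \operatorname{conv}(A_0)$.

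The geometric contradiction is then immediate when $\overline{u}_0 \neq 0$: the centroid lies in $\operatorname{conv}(A(t)) = \operatorname{conv}(A_0) + \overline{u}_0 t$, yet it is frozen in the bounded set $\operatorname{conv}(A_0)$, which is impossible for $t$ sufficiently large. The main obstacle is the degenerate case $\overline{u}_0 = 0$, where $A(t) \equiv A_0$ and the centroid argument carries no information. Here I would invoke a localized Sideris-style virial identity for $M_{\text{grp}}(t) := \int_{A_0} \hat{\rho}\,|x - X^*_{\text{grp}}(0)|^2\,\text{d}x$: two integrations by parts against \eqref{RE}, using again that $\tilde{\rho}$ and $P$ vanish on $\partial A_0$, give
\[
\frac{d^2 M_{\text{grp}}}{dt^2} = 2\int_{A_0} \bigl(\tilde{\rho}\,|u|^2 + 3P\bigr)\,\text{d}x.
\]
A Jensen-type lower bound on $\int_{A_0} P\,\text{d}x$ — exploiting $\gamma > 1$, the conservation of $m_{\text{grp}}$, and the fact that $C^1$ regularity on any finite interval keeps $|u|$ strictly below $c$ — then forces $M_{\text{grp}}(t) \to +\infty$, contradicting the elementary bound $M_{\text{grp}}(t) \leq 4 R_0^2\, m_{\text{grp}}(0)$ inherited from $A_0 \subset B_{R_0}$. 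Establishing this pressure bound uniformly as $t \to \infty$, handling the possibility that $|u|$ approaches $c$, is the most delicate point of the argument.
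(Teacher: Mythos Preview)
Your proposal follows the same skeleton as the paper: vacuum transport shows the boundary of $A(t)$ moves rigidly with velocity $\bar{u}_0$, the Reynolds transport theorem together with vanishing boundary fluxes yields conservation of the localized mass-energy, momentum, and centroid over $A(t)$, and a virial identity for the second moment produces the contradiction.

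The one structural difference is your case split on $\bar{u}_0$. The paper does not split: it combines centroid invariance with the fact that $A(t)$ is a rigid translate of $A_0$ (hence $\diam A(t)\le 2R_0$) to conclude $A(t)\subseteq B_{2R_0+|X^*(0)|}$ uniformly in $t$, and then runs the virial argument once for all $\bar{u}_0$. Your direct geometric contradiction for $\bar{u}_0\neq 0$ is correct and more elementary, but since you need the virial anyway for $\bar{u}_0=0$, the unified route costs no more.

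The point you flag as ``most delicate'' --- a uniform-in-time lower bound on $\int_{A(t)} P\,\text{d}x$ --- is exactly where your outline is incomplete, and your worry about $|u|\to c$ is the right diagnosis. The paper closes this with a dichotomy that sidesteps any need to bound $|u|$ away from $c$: since
\[
m_{\text{grp}}(0)=\int_{A(t)}\hat{\rho}\,\text{d}x=\int_{A(t)}\rho\,\text{d}x+\frac{1}{c^2}\int_{A(t)}\tilde{\rho}|u|^2\,\text{d}x,
\]
at each time either $c^{-2}\int\tilde{\rho}|u|^2\ge\tfrac{1}{2}m_{\text{grp}}(0)$, in which case the kinetic term in $\tfrac{d^2}{dt^2}M_{\text{grp}}$ already gives the bound, or else $\int\rho\ge\tfrac{1}{2}m_{\text{grp}}(0)$, in which case Jensen on the convex $P$ gives $\int P\ge |A_0|\,P\bigl(m_{\text{grp}}(0)/(2|A_0|)\bigr)$. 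Either way $\tfrac{d^2}{dt^2}M_{\text{grp}}\ge D>0$ with $D$ depending only on the initial data, and the quadratic growth contradicts the uniform bound on $M_{\text{grp}}$.
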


	For the second scenario, we explore the hyperbolic structure for the system in a vacuum region. For this purpose, we  introduce the following concept:
	
	\begin{definition}[\textbf{Hyperbolic singularity set}]\label{bugers}
		We define the smooth,  open set $V \subset \Omega$ as a hyperbolic singularity set, if $V$ and $(\rho_0,u_0)$ satisfy
		\begin{equation} \label{eq:12131sss}
		\begin{cases}
		\displaystyle
		\rho_0(x)=0, \ \forall \ x\in V;\\[10pt]
		\displaystyle
		Sp(\nabla u_0) \cap \mathbb{R}^-\neq\  \emptyset,\quad  \forall  \ x \in V,
		\end{cases}
		\end{equation}
		where we denote by $Sp(\nabla u_0(x))$ the spectrum of the Jacobian matrix of $u_0$.
	\end{definition}
	
	Then we show the following:
	\begin{theorem}[\textbf{Blow-up by the hyperbolic singularity set}]\label{th333}If the initial data $(\rho_0,u_0)(x)$ have a hyperbolic singularity set $V$,
		then the  regular solution $(\rho, u)(t,x)$ in $[0, T_m]\times\mathbb{R}^3$ defined  in Definition \ref{d1}  with maximal existence time $T_m$ to  the Cauchy problem  \eqref{RE}-\eqref{P} and \eqref{initial1}   blows up in finite time, i.e.,
		$\
		T_m<+\infty.
		$	\end{theorem}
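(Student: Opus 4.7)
The plan is to exploit the fact that in the vacuum region the momentum equation decouples into a multi-dimensional inviscid Burgers system. Indeed, condition (C) of Definition \ref{d1} forces $u_t + u\cdot\nabla u = 0$ wherever $\rho=0$, so along a particle path $x(t;x_0)$ issuing from $x_0 \in V$ that remains in the vacuum set, $u$ is conserved: $u(t,x(t;x_0)) = u_0(x_0)$, and the trajectory is the straight ray $x(t;x_0) = x_0 + t\,u_0(x_0)$. The deformation gradient is then $\nabla_{x_0} x(t;x_0) = I + t\,\nabla u_0(x_0)$, and the chain rule applied to $u(t,x(t;x_0)) = u_0(x_0)$ yields
\begin{equation*}
\nabla_x u(t, x(t;x_0)) \;=\; \nabla u_0(x_0)\,\bigl(I + t\,\nabla u_0(x_0)\bigr)^{-1},
\end{equation*}
valid as long as $I + t\,\nabla u_0(x_0)$ is invertible. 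Picking $x_0 \in V$ for which $\nabla u_0(x_0)$ admits a negative real eigenvalue $\mu_0$, as guaranteed by \eqref{eq:12131sss}, the corresponding eigenvalue $1+t\mu_0$ of the deformation gradient vanishes at $t^* = -1/\mu_0 > 0$, so $\nabla u(t,\cdot)$ cannot stay bounded past $t^*$, forcing $T_m \leq t^* < \infty$.

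The delicate step, and the real content of the theorem, is to justify that the particle path starting from any $x_0\in V$ remains inside the moving vacuum set for all $t<T_m$. This is where the particle number $n(\rho)$ to be introduced in Section \ref{section6} plays its role: from the continuity equation of \eqref{RE} one derives a transport identity for the relativistic particle density along the fluid flow, schematically $(n\Gamma)_t + \dv(n\Gamma u) = 0$ with Lorentz factor $\Gamma = 1/\sqrt{1-|u|^2/c^2}$, whence
\begin{equation*}
n\bigl(\rho(t,x(t;x_0))\bigr)\,\Gamma(t,x(t;x_0)) \;=\; n(\rho_0(x_0))\,\Gamma_0(x_0)\,\exp\!\Bigl(-\!\int_0^t (\dv u)(s,x(s;x_0))\,ds\Bigr).
\end{equation*}
Since $n$ is strictly monotone with $n(0)=0$, the zero-set of $\rho$ is transported by the flow as long as a $C^1$ solution persists, so $\rho(t,x(t;x_0))=0$ throughout $[0,T_m)$ and the Burgers mechanism above applies.

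The main obstacle is making this vacuum-preservation argument airtight in the relativistic setting: condition (C) of Definition \ref{d1} is precisely the compatibility needed to close the transport equation for $n(\rho)\Gamma$ even across the moving vacuum interface, where the relativistic factors $\hat\rho$ and $\tilde\rho$ could otherwise complicate the analysis. Once this is secured, the quantitative blow-up time $t^* = -1/\mu_0$ follows directly, and openness of $V$ together with continuity of $\nabla u_0$ ensures that the hypothesis can be localised at an interior point so the full straight ray $x_0 + t\,u_0(x_0)$ remains in the vacuum region up to $t^*$, legitimising the chain-rule computation.
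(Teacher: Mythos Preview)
Your proposal is correct and follows essentially the same route as the paper: use the particle-number transport identity (Lemma \ref{vacuumequavalence}) to show the vacuum set is carried by the flow, then apply condition (C) of Definition \ref{d1} to reduce to the multi-dimensional Burgers equation on $V(t)$, and read off blow-up of $\nabla u$ from the formula $\nabla u = (I+t\nabla u_0)^{-1}\nabla u_0$ when $\nabla u_0$ has a negative real eigenvalue. One small clarification: condition (C) is not what closes the transport equation for $n\Gamma$ --- that identity is a consequence of the continuity equation in \eqref{RE} alone --- rather, (C) is precisely what gives you the Burgers dynamics once you know $\rho=0$ along the path; and the worry in your final paragraph about the ray remaining in vacuum is automatically handled since $x(t;x_0)\in V(t)$ by definition of the flow image and $V(t)$ is open.
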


	A  natural question to ask is   whether the local solution in \cite{LeflochUkai} can be extended globally in time if we can identify some initial data which avoid the above two blow-up mechanisms and what  the large time behavior is. In the following theorem, we give a very interesting observation for the solution's asymptotic behavior.

	\begin{theorem}\label{th:2.20}
		Let  $\gamma>1$ in \eqref{P}. For   the Cauchy problem  \eqref{RE}-\eqref{P} with  \eqref{initial1}, there is no  solution $(\rho,u)\in D(\infty)$  satisfying 
		\begin{equation}\label{eq:2.15}
		\limsup_{t\rightarrow +\infty} \|u(t,x)\|_{L^\infty(\mathbb{R}^3)}=0.
		\end{equation}

		\end {theorem}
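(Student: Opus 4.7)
The plan is to derive a contradiction from conservation of momentum. Assume such a global solution $(\rho,u)\in D(\infty)$ exists with $\limsup_{t\to\infty}\|u(t,\cdot)\|_{L^\infty(\mathbb{R}^3)}=0$. By the class $D(\infty)$ we have
$$|\mathbb{P}(t)|\;=\;\Bigl|\int_{\mathbb{R}^3}\tilde\rho(t,x)u(t,x)\,dx\Bigr|\;=\;|\mathbb{P}(0)|>0\qquad\text{for all }t\ge 0.$$
The strategy is to show that $|\mathbb{P}(t)|\le C\|u(t,\cdot)\|_{L^\infty}$ for some time-independent constant $C$, whence the assumed decay of $u$ would force $|\mathbb{P}(t)|\to 0$, contradicting the constancy $|\mathbb{P}(t)|=|\mathbb{P}(0)|>0$.

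The first step is to exploit the subluminal-sound-speed condition $\sqrt{P'(\rho)}<c$ built into Definition \ref{d1}, which for $P=k^2\rho^\gamma$ with $\gamma>1$ gives $k^2\gamma\rho^{\gamma-1}<c^2$, hence the pointwise bound
$$\rho(t,x)\;\le\;C^*:=\Bigl(\tfrac{c^2}{k^2\gamma}\Bigr)^{1/(\gamma-1)}\qquad\text{for all }(t,x).$$
Consequently $P(\rho)/c^2=k^2\rho^{\gamma-1}\rho/c^2\le \rho/\gamma$ pointwise.

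The second step uses the identity (computed from the definitions in Section 2.3)
$$\tilde\rho-\hat\rho\;=\;\frac{P/c^2\,(1-|u|^2/c^2)}{1-|u|^2/c^2}\;=\;\frac{P}{c^2},$$
combined with the trivial lower bound $\hat\rho\ge\rho$ to obtain $\tilde\rho\le \hat\rho+\rho/\gamma\le(1+1/\gamma)\hat\rho$. Conservation of total energy $\int_{\mathbb{R}^3}\hat\rho(t,x)\,dx=m(0)<\infty$ then yields the uniform bound
$$\int_{\mathbb{R}^3}\tilde\rho(t,x)\,dx\;\le\;\Bigl(1+\tfrac{1}{\gamma}\Bigr)m(0).$$

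In the third step, Hölder's inequality gives
$$|\mathbb{P}(t)|\;\le\;\|u(t,\cdot)\|_{L^\infty(\mathbb{R}^3)}\int_{\mathbb{R}^3}\tilde\rho(t,x)\,dx\;\le\;\Bigl(1+\tfrac{1}{\gamma}\Bigr)m(0)\,\|u(t,\cdot)\|_{L^\infty(\mathbb{R}^3)}.$$
Passing to $\limsup_{t\to\infty}$ and using \eqref{eq:2.15} forces $|\mathbb{P}(0)|=\lim_{t\to\infty}|\mathbb{P}(t)|=0$, contradicting the momentum condition $|\mathbb{P}(0)|>0$ in Definition \ref{d2}. No step is difficult; the only delicate point is recognising that the sound-speed constraint $\sqrt{P'(\rho)}<c$ is precisely what provides the a priori $L^\infty$ bound on $\rho$ needed to control $\int\tilde\rho$ via $\int\hat\rho$. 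Once that observation is made, the contradiction is immediate.
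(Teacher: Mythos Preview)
Your proof is correct and follows essentially the same approach as the paper's: bound $|\mathbb{P}(t)|\le\|u(t,\cdot)\|_{L^\infty}\int\tilde\rho\,dx$, control $\int\tilde\rho\,dx$ by $m(0)$ via the identity $\tilde\rho=\hat\rho+P/c^2$ together with the sound-speed constraint $P'(\rho)<c^2$, and conclude a uniform positive lower bound on $\|u(t,\cdot)\|_{L^\infty}$ contradicting the assumed decay. The only cosmetic difference is that the paper bounds $\int P\,dx$ by writing $P(\rho)=\int_0^\rho P'(\sigma)\,d\sigma\le c^2\rho$ and then $\int\rho\,dx\le m(0)$, yielding the constant $2m(0)$, whereas you obtain the slightly sharper $(1+1/\gamma)m(0)$ directly from $k^2\rho^{\gamma-1}<c^2/\gamma$.
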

		

			

			
			\section{The mass density lower bound of  1-dimensional CE} \label{section4}
			\qquad  Let $d=1$ in \eqref{CE} and \eqref{initial1}.  Recall at this point the notation introduced in Subsection \ref{subsection2.2}. We dedicate this section to the presentation of the new approach for obtaining the crucial lower bound estimate on the mass density for the classical compressible Euler equations \eqref{CE}. The main idea is that, instead of obtaining a transport equation for $\rho$ and using it to obtain the estimate, we focus instead on the difference of the two Riemann invariants, in the classical case given by $\tilde{w}-\tilde{z}$. The function $\tilde{w}-\tilde{z}$ is an increasing function of $\rho$ and therefore control on $\tilde{w}-\tilde{z}$ translates to control on $\rho$. To best exhibit our approach, we apply it in the first subsection to the classical Euler equations. In the final subsection, we lay down the main argument for singularity formation in finite time, which is essentially that of \cite{Lax}. We explain then why a lower bound estimate is of such importance; and how we may use to conclude our argument.

			\subsection{The mass density lower bound estimate in the Eulerian setting }\label{subsection4.1} We begin by noticing that the weighted gradients satisfy certain Riccati equations.	
			\begin{lemma}\label{lemma41} For the $C^1$ solution of the system \eqref{CE}, the following Riccati ODEs hold:
				
				\be \partial_- \phi= -\big(\e^{-\tilde{h}_1}\tilde{\lambda}_{1\tilde z}\big)\phi^2, \hspace{2mm} \partial_+ \psi= -\big(\e^{-\tilde{h}_2}\tilde{\lambda}_{2\tilde w}\big)\psi^2. \ee 
				Moreover, let $y^i(t,y^i_0)$ $(i=1,2)$ be two characteristic curves (defined in Section 2.2) starting from $(0,y^i_0)$. One has 
				\[ \frac{1}{\phi(t,y^1(t,y^1_0))} = \frac{1}{\phi(0,y^1_0) } + \int_0^t \big(\mathrm{e}^{-\tilde{h}_1}\tilde{\lambda}_{1\tilde{z}}\big) (\sigma,y^1(\sigma,y^1_0))\hspace{.5mm} \text{d}\sigma.   \] 
				\[ \frac{1}{\psi(t,y^2(t,y^2_0))} = \frac{1}{\psi(0,y^2_0) } + \int_0^t \big(\mathrm{e}^{-\tilde{h}_2}\tilde{\lambda}_{2\tilde{w}}\big) (\sigma,y^2(\sigma,y^2_0))\hspace{.5mm} \text{d}\sigma.   \] 								
				
			\end{lemma}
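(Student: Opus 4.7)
The plan is to derive the two Riccati identities directly from the transport equations $\partial_+\tilde{w}=0$ and $\partial_-\tilde{z}=0$, differentiated in $x$, and then absorb the off-diagonal coupling by the integrating factors $\mathrm{e}^{\tilde{h}_1}$ and $\mathrm{e}^{\tilde{h}_2}$. The integrated statements then follow by recognizing the Riccati ODEs as linear ODEs for $1/\phi$ and $1/\psi$ along the respective characteristics.

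The first step is to differentiate $\tilde{z}_t+\tilde{\lambda}_1\tilde{z}_x=0$ in $x$, which gives
\[
\partial_-\tilde{\alpha}+\tilde{\lambda}_{1,x}\tilde{\alpha}=0.
\]
Since $\tilde{\lambda}_1$ is a function of $(\tilde{w},\tilde{z})$, the chain rule yields $\tilde{\lambda}_{1,x}=\tilde{\lambda}_{1\tilde{w}}\tilde{\beta}+\tilde{\lambda}_{1\tilde{z}}\tilde{\alpha}$, so
\[
\partial_-\tilde{\alpha}=-\tilde{\lambda}_{1\tilde{w}}\tilde{\alpha}\tilde{\beta}-\tilde{\lambda}_{1\tilde{z}}\tilde{\alpha}^2.
\]
The mixed quadratic $\tilde{\alpha}\tilde{\beta}$ is the only obstruction to a closed Riccati ODE for $\tilde{\alpha}$, and it is precisely what the weight $\mathrm{e}^{\tilde{h}_1}$ is designed to cancel.

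The second step is to choose $\tilde{h}_1=\tilde{h}_1(\tilde{w})$, consistent with the defining relation $\tilde{h}_{1\tilde{w}}=\tilde{\lambda}_{1\tilde{w}}/(\tilde{\lambda}_1-\tilde{\lambda}_2)$, and compute
\[
\partial_-\tilde{h}_1=\tilde{h}_{1\tilde{w}}\,\partial_-\tilde{w}=\tilde{h}_{1\tilde{w}}(\tilde{\lambda}_1-\tilde{\lambda}_2)\tilde{\beta}=\tilde{\lambda}_{1\tilde{w}}\tilde{\beta},
\]
where I used $\partial_+\tilde{w}=0$ to rewrite $\partial_-\tilde{w}=(\tilde{\lambda}_1-\tilde{\lambda}_2)\tilde{\beta}$. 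Then
\[
\partial_-\phi=\mathrm{e}^{\tilde{h}_1}(\partial_-\tilde{h}_1)\tilde{\alpha}+\mathrm{e}^{\tilde{h}_1}\partial_-\tilde{\alpha}=\mathrm{e}^{\tilde{h}_1}\tilde{\lambda}_{1\tilde{w}}\tilde{\alpha}\tilde{\beta}-\mathrm{e}^{\tilde{h}_1}\tilde{\lambda}_{1\tilde{w}}\tilde{\alpha}\tilde{\beta}-\mathrm{e}^{\tilde{h}_1}\tilde{\lambda}_{1\tilde{z}}\tilde{\alpha}^2=-\mathrm{e}^{-\tilde{h}_1}\tilde{\lambda}_{1\tilde{z}}\phi^2,
\]
using $\tilde{\alpha}=\mathrm{e}^{-\tilde{h}_1}\phi$ in the last step. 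The mirror-symmetric calculation starting from $\partial_+\tilde{w}_x$ with $\tilde{h}_2=\tilde{h}_2(\tilde{z})$ and $\partial_-\tilde{z}=0$ yields $\partial_+\psi=-(\mathrm{e}^{-\tilde{h}_2}\tilde{\lambda}_{2\tilde{w}})\psi^2$.

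Finally, the Riccati equation for $\phi$ is equivalent to the linear identity $\partial_-(1/\phi)=\mathrm{e}^{-\tilde{h}_1}\tilde{\lambda}_{1\tilde{z}}$. Integrating this along the $1$-characteristic $y^1(t,y^1_0)$, on which $\partial_-$ is just $d/dt$, yields the stated formula for $1/\phi$, and an analogous integration along $y^2(t,y^2_0)$ produces the formula for $1/\psi$. There is no genuine obstacle in this lemma; the only delicate bookkeeping is verifying that the ansatz $\tilde{h}_1=\tilde{h}_1(\tilde{w})$ (respectively $\tilde{h}_2=\tilde{h}_2(\tilde{z})$) is consistent with \eqref{hfunctionclas} and ensures the exact cancellation of the cross term $\tilde{\lambda}_{1\tilde{w}}\tilde{\alpha}\tilde{\beta}$.
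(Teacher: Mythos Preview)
Your argument is correct and follows exactly the paper's approach: differentiate the transport equation $\partial_-\tilde z=0$ in $x$, absorb the cross term $\tilde\lambda_{1\tilde w}\tilde\alpha\tilde\beta$ with the integrating factor $\mathrm{e}^{\tilde h_1}$, and then integrate the resulting Riccati ODE along the characteristic. One small correction is needed, however: the ansatz $\tilde h_1=\tilde h_1(\tilde w)$ is \emph{not} consistent with \eqref{hfunctionclas}, since the right-hand side $\tilde\lambda_{1\tilde w}/(\tilde\lambda_1-\tilde\lambda_2)$ genuinely depends on both $\tilde w$ and $\tilde z$ (the paper itself later computes $\tilde h_1=\frac{\gamma-3}{2\gamma-2}\ln(\tilde w-\tilde z)$). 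Your key identity $\partial_-\tilde h_1=\tilde h_{1\tilde w}\,\partial_-\tilde w$ is nonetheless valid, but for a different reason: the missing term $\tilde h_{1\tilde z}\,\partial_-\tilde z$ vanishes because $\partial_-\tilde z=0$. That is the actual mechanism behind the cancellation, not the one-variable ansatz you invoke.
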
	
			
			\begin{proof} Differentiating the last equation of \eqref{CEODE} with respect to $x$, and  recalling the definition of $\tilde{h}_1$ from \eqref{hfunctionclas}, we arrive at
				\[  \partial_- \hspace{.5mm} \tilde{\alpha}+( \partial_- \tilde{h}_1 \hspace{.5mm}) \tilde{\alpha} + \tilde{\lambda}_{1\tilde{z}}\hspace{.5mm} \tilde{\alpha}^2=0 ,       \] 
				which, along with  $\phi = \mathrm{e}^{\tilde{h}_1}\tilde{\alpha}$, implies the desired ODE on $\phi$.  The formula of $\phi$ along the backward characteristic curve can be obtained by solving the 	Riccati ODE that we obtained. 			
				The proof for $\psi$ is similar, and here we omit the details.

			\end{proof}

			Our strategy will be to work towards obtaining a time-dependent lower bound on $\tilde w-\tilde z$. To achieve this, we must first rewrite $\tilde{\lambda}_1, \tilde{\lambda}_2$ in terms of $\tilde{w},\tilde{z}$:
			\be \begin{cases}\displaystyle \tilde{\lambda}_1 = \frac{\tilde{w}+\tilde{z}}{2}- \frac{(\tilde{w}-\tilde{z})(\g-1)}{4},\\[3pt]
				\displaystyle \tilde{\lambda}_2 = \frac{\tilde{w}+\tilde{z}}{2}+ \frac{(\tilde{w}-\tilde{z})(\g-1)}{4}. \end{cases} \ee 
			One can get that $\tilde{\lambda}_{1\tilde{z}} = \tilde{\lambda}_{2\tilde{w}} = \frac{\g+1}{4}$.
			Notice that in the classical case treated here, one can take \be\tilde{h}_1 = \tilde{h}_2 = \tilde{h} = \frac{\g-3}{2\g-2} \ln(\tilde w-\tilde z),\ee which implies that   \be \label{formofequations1} e^{-\tilde{h}_1}\tilde{\lambda}_{1\tilde z}= \e^{-\tilde{h}_2}\tilde{\lambda}_{2\tilde w}= \frac{\g+1}{4} (\tilde w-\tilde z)^{\frac{3-\g}{2\g-2}}. \ee 
			
			Based on these  observations and Lemma \ref{lemma41}, standard ODE theory then leads us to the following result:
			\begin{proposition}\label{propositionmaxclas}
				Denote
				\[ \widetilde{Q}_1 = max\begin{Bmatrix}0 ,\sup_x \phi(0,x) \end{Bmatrix},  \quad \widetilde{Q}_2 = max\begin{Bmatrix}0 ,\sup_x \psi(0,x) \end{Bmatrix}.       \]  For the $C^1$ solution of the system \eqref{CE},  there holds $  \phi \leq \widetilde{Q}_1, \hspace{1mm} \psi \leq \widetilde{Q}_2.  $
			\end{proposition}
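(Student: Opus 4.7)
My plan is to exploit the sign of the coefficient $e^{-\tilde{h}_1}\tilde{\lambda}_{1\tilde{z}}$ in the Riccati equation derived in Lemma \ref{lemma41}, together with the explicit integral representation for $1/\phi$ along the backward characteristic. From \eqref{formofequations1} we have
\begin{equation*}
e^{-\tilde{h}_1}\tilde{\lambda}_{1\tilde z} = \frac{\gamma+1}{4}(\tilde{w}-\tilde{z})^{\frac{3-\gamma}{2\gamma-2}},
\end{equation*}
which is strictly positive since $\tilde{w}-\tilde{z}$ is a positive increasing function of $\rho>0$ along the $C^1$ solution. This is really the only input required.

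The argument then proceeds by a case split at the base point $y_0^1$. First, suppose $\phi(0,y_0^1)>0$. Then by the integral formula in Lemma \ref{lemma41}, $\frac{1}{\phi(t,y^1(t,y_0^1))}$ is the sum of $\frac{1}{\phi(0,y_0^1)}>0$ and a nonnegative quantity, hence it is positive and bounded below by $\frac{1}{\phi(0,y_0^1)}$. Inverting gives $0<\phi(t,y^1(t,y_0^1))\le\phi(0,y_0^1)\le\widetilde{Q}_1$. Second, suppose $\phi(0,y_0^1)\le 0$. From $\partial_-\phi = -(e^{-\tilde{h}_1}\tilde{\lambda}_{1\tilde z})\phi^2\le 0$ we see that $\phi$ is nonincreasing along the backward characteristic, so it remains $\le 0\le \widetilde{Q}_1$ for as long as it exists (the $C^1$ regularity hypothesis prevents finite-time blowup within the time interval of interest). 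Taking the supremum over all $y_0^1\in\mathbb{R}$ yields $\phi\le\widetilde{Q}_1$ pointwise, and an identical argument using the forward characteristic and the $\psi$-equation delivers $\psi\le\widetilde{Q}_2$.

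There is no serious obstacle here: the proof is essentially a one-sided comparison for a Riccati ODE with a nonnegative coefficient. The one subtlety worth flagging is that one must invoke positivity of $\tilde{w}-\tilde{z}$ (equivalently $\rho>0$), which is guaranteed by the $C^1$ hypothesis in Definition \ref{ 2.1-2}; without this, the coefficient $(\tilde{w}-\tilde{z})^{(3-\gamma)/(2\gamma-2)}$ would be ill-defined for $\gamma\ne 3$. This proposition will later serve as the upper bound companion to the density-lower-bound estimate that is the main goal of the section.
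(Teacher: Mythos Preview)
Your proof is correct and follows exactly the approach the paper has in mind: the paper does not give a detailed argument for this proposition but simply states that it follows from Lemma~\ref{lemma41} and the positivity of the coefficient in \eqref{formofequations1} by ``standard ODE theory.'' You have spelled out precisely that standard ODE argument---the case split on the sign of the initial value together with the monotonicity of $1/\phi$ along characteristics when the Riccati coefficient is nonnegative---so there is nothing to add.
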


			Finally, we can get the desired lower bound estimates of the mass density.
			\begin{lemma}\label{lowerboundofCE}
				let $y^i(t,y^i_0)$ $(i=1,2)$ be two characteristic curves starting from $(0,y^i_0)$.  	There holds 	
				\be \big(\e^{-\tilde h}\tilde{\lambda}_{1\tilde{z}}\big)(t, y^1(t,y^1_0)) \geq \frac{1}{C_1+ C_2 t}, \quad   \big(\e^{-\tilde h}\tilde{\lambda}_{2\tilde{w}}\big)(t, y^2(t,y^2_0)) \geq \frac{1}{C_1+ C_2 t},      \ee 	
				for positive constants $C_i$ $(i=1,2)$	 independent of the time.			
				
			\end{lemma}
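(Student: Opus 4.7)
The plan is to exploit the identity \eqref{formofequations1}, which tells us that both quantities to be bounded below are equal (up to the constant $(\gamma+1)/4$) to $(\tilde w-\tilde z)^{(3-\gamma)/(2\gamma-2)}$. Hence the task reduces to showing that this single positive power of $\tilde w-\tilde z$ decays no faster than $O(1/t)$ along each characteristic. I focus on the physically relevant range $1<\gamma<3$, where the exponent is strictly positive and a genuine lower bound on $\tilde w-\tilde z$ is required; the cases $\gamma=3$ (the quantity is identically $1$) and $\gamma>3$ (the exponent flips sign and no density lower bound is needed, as the authors noted earlier) are either trivial or outside the scope of this estimate.

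Next I would compute the evolution of $\tilde w-\tilde z$ along the backward characteristic $y^{1}$. Since $\partial_-\tilde z=0$ by \eqref{CEODE}, the chain rule gives $\partial_-(\tilde w-\tilde z)=(\tilde\lambda_1-\tilde\lambda_2)\tilde w_x=-2\sqrt{P'(\rho)}\,\tilde\beta$. In the polytropic case the Riemann-invariant computation yields $\sqrt{P'(\rho)}=\tfrac{\gamma-1}{4}(\tilde w-\tilde z)$, and combining this with $\tilde\beta=e^{-\tilde h}\psi$ and $e^{-\tilde h}=(\tilde w-\tilde z)^{(3-\gamma)/(2\gamma-2)}$ collapses the right-hand side into a pure power of $\tilde w-\tilde z$ times $\psi$. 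Introducing the auxiliary quantity $f:=(\tilde w-\tilde z)^{(3-\gamma)/(2\gamma-2)}$ and differentiating, the exponent arithmetic cleanly produces the Riccati identity
\[
\partial_- f=-\frac{3-\gamma}{4}\,\psi\, f^{2}.
\]
Invoking Proposition \ref{propositionmaxclas} to bound $\psi\le\widetilde Q_2$ then yields the differential inequality $\partial_-(1/f)\le\tfrac{3-\gamma}{4}\widetilde Q_2$, which integrates along $y^{1}$ to give $1/f(t,y^{1}(t,y_0^{1}))\le 1/f(0,y_0^{1})+\tfrac{3-\gamma}{4}\widetilde Q_2\,t$. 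Multiplying through by $(\gamma+1)/4$ produces the claimed $1/(C_1+C_2 t)$ lower bound for $e^{-\tilde h}\tilde\lambda_{1\tilde z}$; the uniformity of $C_1$ in $y_0^{1}$ follows because the assumption $\inf_x(\tilde w_0-\tilde z_0)>0$ keeps $f(0,\cdot)$ bounded below, hence $1/f(0,\cdot)$ bounded above independently of the starting point.

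The second bound of the lemma, along $y^{2}$, follows by the completely symmetric argument using $\partial_+\tilde w=0$, $\tilde\alpha=e^{-\tilde h}\phi$, and the bound $\phi\le\widetilde Q_1$ from the same proposition. The main obstacle is conceptual rather than computational: recognising that the right object to study is the precise power $f=(\tilde w-\tilde z)^{(3-\gamma)/(2\gamma-2)}$ rather than $\tilde w-\tilde z$ itself. This choice is dictated, in hindsight, by the requirement that the implicit weight inside $e^{-\tilde h}$ and the growth exponent appearing in $\partial_-(\tilde w-\tilde z)$ match exactly, so that the evolution collapses into a pure Riccati inequality whose reciprocal grows only linearly in $t$; without this calibrated choice one would instead obtain an inhomogeneous ODE for which the sharp rate $1/(C_1+C_2 t)$ would not be visible.
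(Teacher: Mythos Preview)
Your proof is correct and follows essentially the same route as the paper: both derive the transport inequality for $\tilde w-\tilde z$ along the backward characteristic using $\partial_-\tilde z=0$ and the bound $\psi\le\widetilde Q_2$, then integrate to obtain the $O(1/t)$ lower bound on $(\tilde w-\tilde z)^{(3-\gamma)/(2\gamma-2)}$. The only cosmetic difference is that you introduce $f=(\tilde w-\tilde z)^{(3-\gamma)/(2\gamma-2)}$ from the outset and obtain the exact Riccati identity $\partial_- f=-\tfrac{3-\gamma}{4}\psi f^{2}$, whereas the paper integrates the power-law inequality $\partial_-(\tilde w-\tilde z)\ge -C'(\tilde w-\tilde z)^{(\gamma+1)/(2\gamma-2)}$ directly and substitutes back via \eqref{formofequations1}; these are the same computation in different packaging.
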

			\begin{proof} According to \eqref{CEODE} , and the definition of $\psi$, one can obtain that 
				\be \label{lemmaequation1} \partial_-(\tilde w-\tilde z) = (\tilde w-\tilde z)_t + \tilde{\lambda}_1(\tilde w-\tilde z)_x = (\tilde {\lambda}_1 - \tilde {\lambda}_2) \tilde {w}_x=- \frac{\tilde{\lambda}_2 - \tilde{\lambda}_1}{\e^{\tilde{h}_2}} \psi.   \ee  
				which, along with the  Proposition \ref{propositionmaxclas} and the relation \eqref{formofequations1}, implies that  
				\be \label{lemmaequation2} \partial_-(\tilde w-\tilde z) \geq -   \frac{\tilde{\lambda}_2 - \tilde{\lambda}_1}{\e^{\tilde{h}_2}} \widetilde{Q}_2= -\frac{(\g-1) \widetilde{Q}_2}{2} \hspace{.5mm} (\tilde w-\tilde z)^{\frac{\g+1}{2\g-2}}. \ee 
				Denoting  $C'=\frac{(\g-1) \widetilde{Q}_2}{2}$, and integrating  \eqref{lemmaequation2} along $y^1(t,y^1_0)$ over $[0,t]$,  yields 
				\be (\tilde{w}-\tilde{z})(t, y^1(t,y^1_0)) \geq  \Big( \big((\tilde{w}-\tilde{z})(0,y^1_0)\big)^{\frac{2\gamma-2}{\gamma-3}}+ \frac{3-\gamma}{2\gamma-2}C't)             \Big)^{\frac{2\g-2}{\g-3}}.\ee
				In particular, taking \eqref{formofequations1} into account, one can  obtain the time-dependent lower  bound
				\be \big(\e^{-\tilde h}\tilde{\lambda}_{1\tilde{z}}\big)(t, y^1(t,y^1_0)) \geq \frac{1}{C_1 + C_2t}, \ee 
				for positive constants $C_i$ $(i=1,2)$	 independent of $t$.	Similarly, we can obtain the second  estimate. \end{proof}
			\begin{remark}
				Taking into account that $w-z = C \rho^{\frac{\gamma-1}{2}}$ for some  universal constant $C>0$ and Lemma \ref{lowerboundofCE}, we recover precisely the result of \cite{ChenPanZhu}: \[ \rho(t,x) \geq \left(\frac{1}{C_1 +C_2 t}\right)^{\frac{4}{3-\gamma}}.   \]
			\end{remark}
			\subsection{Formation of singularity}

			\qquad It is important at this stage to highlight the main mechanism that shall be used throughout the paper to obtain the formation of singularities. The argument within this subsection can be traced back to P.D. Lax in his 1964 paper on $2\times2$ systems. This is best described in the context of system \eqref{CE}. The Riccati type equations in Lemma \ref{lemma41} are precisely what gives us a clear passage to study the singularity
			formation and/or global existence of classical solutions for hyperbolic systems
			with two unknowns.

			Without loss of generality, we assume  that there exists a point $(0,y^1_0)$ on the initial data line $t=0$ such that $\phi(0,y^1_0)<0$.
			Then we see that a sufficient condition for the breakdown of the classical solution is
			\be   \label{singularitycond}  \int_0^\infty \big(\mathrm{e}^{-\tilde{h}_1}\tilde{\lambda}_{1\tilde{z}}\big) (t,y^1(t,y^1_0))\hspace{.5mm} \text{d}t = \infty,       \ee which, actually can be verified by the conclusions obtained in Lemma \ref{lowerboundofCE}. The proof of our main theorem in the next section essentially comes down to establishing a statement of the form \eqref{singularitycond} for the system of $(1+1)-$dimensional relativistic Euler equations.

			\section{Formation of singularities for the (1+1)-dimensional RE}\label{section5}
			\qquad In this section we shall lay down the proof of Theorem \ref{1RE}.     Let $d=1$ in \eqref{RE} and \eqref{initial1}.   As we mentioned in Subsection \ref{subsection2.1}, it is a well-known result that given initial data as in Theorem \ref{1RE}, there exists a $T \in (0,\infty)$ such that there exists a  local-in-time $C^1$ solution to the Cauchy problem $\eqref{RE}-\eqref{P}$ with \eqref{initial1}. We proceed by obtaining estimates on the solution in the slab $[0,T]\times \mathbb{R}$.

			\subsection{Preliminaries}\label{subsection5.1}
			
			\qquad Before giving the detailed proof, we first give several fundamental lemmata for the RE equations. 
			First, we show  that the relativistic fluid velocity $u$ is less than the light speed $c$.
			\begin{lemma}\label{lemma5.1}
				For the $C^1$ solution of the  Cauchy problem $\eqref{RE}$-$\eqref{P}$ with \eqref{initial1}, under the Assumption \ref{p-assumption}, 	the absolute value $\lvert u \rvert$ of the velocity function is uniformly bounded away from the light speed $c$.
			\end{lemma}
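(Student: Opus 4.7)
The plan is to exploit the Riemann-invariant structure encoded in \eqref{REODE}. Since the existence of the $C^1$ solution on $[0,T]\times\mathbb{R}$ guarantees that the forward and backward characteristics foot back to $t=0$, transport of $w$ along the $\lambda_2$-characteristics and of $z$ along the $\lambda_1$-characteristics yields the pointwise bounds
\begin{equation*}
\inf_{x\in\mathbb{R}} w_0(x)\ \le\ w(t,x)\ \le\ w_{\max},\qquad z_{\min}\ \le\ z(t,x)\ \le\ \sup_{x\in\mathbb{R}} z_0(x),
\end{equation*}
for every $(t,x)\in[0,T]\times\mathbb{R}$. Assumption \ref{p-assumption} provides $\|(w_0,z_0)\|_{C^1(\mathbb{R})}\le M_0$, so all four of these extrema are finite and $|w|+|z|$ is uniformly bounded in terms of $M_0$.

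Next I would combine the two defining formulas \eqref{ru}-\eqref{srho}: subtracting gives $w-z=2\,\Jrel(\rho)$, while adding gives
\begin{equation*}
w+z\ =\ c\,\ln\!\Big(\frac{c+u}{c-u}\Big).
\end{equation*}
Using the first identity together with the upper bound $w-z\le w_{\max}-z_{\min}$ from the previous step and the monotonicity of $\Jrel$ yields $\rho(t,x)\le \Jrel^{-1}\!\big(\tfrac{w_{\max}-z_{\min}}{2}\big)$; Assumption \ref{p-assumption} then forces $\sqrt{P'(\rho(t,x))}<c$ uniformly, which is the side condition needed for the eigenvalues \eqref{lambda12} to be real and for the characteristic analysis to be self-consistent. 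Using the second identity together with the uniform bound on $|w+z|$ furnished by the first paragraph, I obtain two constants $0<m\le M<\infty$, depending only on $c$ and the initial data, such that
\begin{equation*}
m\ \le\ \frac{c+u(t,x)}{c-u(t,x)}\ \le\ M\qquad\text{on }[0,T]\times\mathbb{R}.
\end{equation*}

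Inverting this two-sided inequality for $u$ in terms of $c$ gives $|u(t,x)|\le c\,\max\!\big(\tfrac{M-1}{M+1},\tfrac{1-m}{1+m}\big)<c$, which is exactly the desired conclusion that $|u|$ is uniformly separated from the light speed by a constant depending only on $c$, $M_0$, and the pressure law. There is no real obstacle here beyond justifying the Riemann invariant transport rigorously; the main point that needs care is ensuring the characteristics are globally defined on $[0,T]$ so that every point $(t,x)$ really is connected by a characteristic back to $t=0$. This follows from the $C^1$ regularity of the solution and the Lipschitz continuity of the eigenvalues $\lambda_1,\lambda_2$, which in turn rests on the preliminary sound speed bound $\sqrt{P'(\rho)}<c$ obtained above; consequently the argument is self-contained and requires no bootstrapping beyond what Assumption \ref{p-assumption} already supplies.
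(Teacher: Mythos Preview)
Your proposal is correct and follows essentially the same route as the paper: use the transport equations \eqref{REODE} to propagate the initial $C^1$ bounds on $w,z$ forward in time, then read off a two-sided bound on $\frac{c+u}{c-u}$ from $w+z=c\,\ln\!\big(\frac{c+u}{c-u}\big)$. The paper's proof is just the terse version of your first and third paragraphs.

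Two minor remarks. First, the sound-speed estimate you derive in the middle paragraph is not needed for this lemma; the paper isolates it as a separate result (Lemma~\ref{light}), and you can drop it here without loss. Second, your closing worry about circularity is misplaced: the well-posedness of the characteristic ODEs \eqref{lambda14} on $[0,T]$ follows directly from the assumed $C^1$ regularity of $(\rho,u)$ and the explicit smooth dependence of $\lambda_1,\lambda_2$ on $(\rho,u)$ via \eqref{lambda12}; you do not need the bound $\sqrt{P'(\rho)}<c$ for that step, so no bootstrapping is required.
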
 \begin{proof} According to  \eqref{CEODE}, one can obtain that 
				\be \big\lvert \ln\Big(\frac{c+u}{c-u}\Big)\big\rvert = \big\lvert \frac{w+z}{c} \big\rvert \leq \frac{2M_0}{c}.\ee
				That is to say,
				$$\mathrm{e}^{-\frac{2M_0}{c}} < \frac{c+u}{c-u} < \mathrm{e}^{\frac{2M_0}{c}},     $$
				which implies  that $\lvert u \rvert$ is uniformly bounded away from $c$.\end{proof} 
			
			Second, we confirm that the mass-energy density will keep the positivity property. 
			\begin{lemma}\label{lemma5.2}
				For the $C^1$ solution of the  Cauchy problem $\eqref{RE}$-$\eqref{P}$ with \eqref{initial1}, under the Assumption \ref{p-assumption},  $\rho > 0$.
			\end{lemma}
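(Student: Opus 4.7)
My plan is to read off the positivity of $\rho$ from the Riemann invariants, and then close the argument by a continuity-and-contradiction scheme. From the definitions \eqref{ru}--\eqref{srho}, one immediately has
\[
\tfrac{1}{2}(w-z) \;=\; \int_0^\rho \frac{\sqrt{P'(\sigma)}}{\sigma + P(\sigma)/c^2}\,d\sigma \;=\; J_{\text{rel}}(\rho),
\]
and since the map $J_{\text{rel}}:[0,\infty)\to[0,J_{\text{rel}}(\infty))$ is continuous, strictly increasing, and vanishes only at $0$, the positivity $\rho(t,x)>0$ is equivalent to the strict inequality $w(t,x)>z(t,x)$. So the whole task reduces to establishing $w>z$ pointwise on $[0,T]\times\mathbb{R}$.

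Next, I would exploit the conservation relations \eqref{REODE}: $w$ is constant along each 2-characteristic and $z$ along each 1-characteristic. By Lemma \ref{lemma5.1}, $|u|$ is uniformly bounded away from $c$, and by the second condition of Assumption \ref{p-assumption} together with the bound $\frac{w-z}{2}\le\frac{w_{\max}-z_{\min}}{2}$ (which follows from invariance of $w,z$ along their respective characteristics), one has $\sqrt{P'(\rho)}<c$ uniformly. Consequently, the eigenvalues $\lambda_1,\lambda_2$ from \eqref{lambda12} are bounded on $[0,T]\times\mathbb{R}$, so the backward characteristics through any $(t,x)$ trace back to well-defined feet $(0,x_1^\ast)$ and $(0,x_2^\ast)$, yielding the representation
\[
(w-z)(t,x) \;=\; w_0(x_2^\ast) - z_0(x_1^\ast).
\]

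Finally, I would run a continuity-and-contradiction argument on
$$T^\ast:=\sup\{\,t\in[0,T]\,:\,\rho>0\ \text{on}\ [0,t]\times\mathbb{R}\,\}.$$
By Assumption \ref{p-assumption}, $\inf_x(w_0-z_0)=:\delta>0$, which via $J_{\text{rel}}$ gives $\inf_x\rho_0>0$; together with the local-in-time $C^1$ well-posedness, this forces $T^\ast>0$. If $T^\ast<T$, continuity of $w-z$ produces some $x^\ast$ at which $w_0(x_2^\ast)=z_0(x_1^\ast)$, and the contradiction is to be extracted from the uniform positivity of $w_0-z_0$. \textbf{The main obstacle} sits exactly here: because $x_1^\ast$ and $x_2^\ast$ are in general distinct, the pointwise gap $w_0(x)-z_0(x)\ge\delta$ does not immediately preclude $w_0(x_2^\ast)=z_0(x_1^\ast)$. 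To overcome this, I would use the $C^1$ control $\|(w_0,z_0)\|_{C^1}\le M_0$ and the uniform bound on the characteristic speeds to estimate the separation $|x_1^\ast-x_2^\ast|\le C\,T^\ast$, then split
\[
w_0(x_2^\ast) - z_0(x_1^\ast) \;=\; \bigl[w_0(x_2^\ast)-z_0(x_2^\ast)\bigr] + \bigl[z_0(x_2^\ast)-z_0(x_1^\ast)\bigr] \;\ge\; \delta - M_0\,C\,T^\ast;
\]
this is strictly positive on a short initial slab, yielding a contradiction with the definition of $T^\ast$, and then a finite bootstrap over time-increments of fixed length covers $[0,T]$.
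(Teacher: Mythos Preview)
Your reduction to the pointwise inequality $w>z$ via $J_{\text{rel}}$ and the representation $(w-z)(t,x)=w_0(x_2^\ast)-z_0(x_1^\ast)$ are both correct, and you correctly locate the obstacle: the two characteristic feet differ. The gap is in your proposed cure. Your estimate
\[
w_0(x_2^\ast)-z_0(x_1^\ast)\ \ge\ \delta - M_0\,C\,t
\]
is valid, but it only yields $w-z>0$ on a slab of length at most $\delta/(M_0C)$. The claimed ``finite bootstrap over time-increments of fixed length'' does not go through: at the end of the first slab your guaranteed gap has dropped to at most $\delta/2$, and you have no control showing that $\|z_x(t,\cdot)\|_{L^\infty}$ has not grown (indeed, this quantity is governed by a Riccati equation and is exactly the object that may blow up). Feeding the halved gap and a possibly larger Lipschitz constant back into the same estimate produces step lengths that shrink at least geometrically, so the total time covered is bounded by a constant of order $\delta/(M_0C)$, which need not reach $T$. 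Hence the contradiction with $T^\ast<T$ is never obtained.

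The paper avoids this entirely by exploiting a structural fact you have not used: $\lambda_2-\lambda_1$ vanishes when $w-z=0$ (since $\sqrt{P'}=0$ there). Writing $\lambda_1-\lambda_2=g(w-z,u)$ with $g(0,u)=0$ and applying the mean value theorem gives $\lambda_1-\lambda_2=\partial_\theta g(\theta_1,u)\,(w-z)$, so the transport identity $(w-z)^{\backprime}=(\lambda_1-\lambda_2)z_x$ becomes a \emph{linear} ODE
\[
(w-z)^{\backprime}=\bigl(\partial_\theta g(\theta_1,u)\,z_x\bigr)(w-z)
\]
along each forward characteristic. Integrating yields
\[
(w-z)(t,x)=(w_0-z_0)(x_0^2)\,\exp\!\Big(\int_0^t \partial_\theta g(\theta_1,u)\,z_x\,d\sigma\Big),
\]
which is strictly positive for all $t\in[0,T)$ because $(w_0-z_0)(x_0^2)>0$ and the exponential is finite along any $C^1$ characteristic. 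This Gr\"onwall-type argument requires only that $z_x$ be integrable along the (compact) characteristic curve, not a uniform spatial bound, and it gives positivity on the whole existence interval in one step with no iteration.
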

			
			\begin{proof}  According to \eqref{Pprimeformularel}, one has
				
				\[ \rho= \big( \frac{c}{k}\hspace{.5mm} \text{\text{Tan}}\big( \frac{(w-z)(\g-1)}{4c \sqrt{\g}} \big) \big)^{\frac{2}{\g-1}} := F(w-z).      \]Notice that $F(0)=0$. Denote $\theta = w-z$. We can then rewrite
				
				\[ \lambda_1 - \lambda_2 = - \frac{2 \sqrt{P'(\rho)}\hspace{.5mm}(1-u^2/c^2)}{1-u^2 P'(\rho)/c^4} = - \frac{2 \sqrt{P'(F(\theta))} (1-u^2/c^2)}{1- u^2 P'(F(\theta))/c^4}=g(\theta, u).        \]Notice then that $g(0,u) =0$ and \[  - \frac{2 \sqrt{P'(F(\theta))} (1-u^2/c^2)}{1- u^2 P'(F(\theta))/c^4}  = \frac{\partial g(\theta_1,u)}{\partial u}\theta,          \]where $\theta_1$ is between $0$ and $\theta$. Thus,
				
				\[ (w-z)_t + \lambda_2 (w-z)_x = \frac{\partial g (\theta_1, u)}{\partial u }(w-z) z_x.      \]Let $x^2 = x^2(t,x^2_0)$ denote the the forward characteristic curve starting from the point $(0,x^2_0)$. Integrating along this forward characteristic over $[0,t]$, one can get
				\[ (w-z)(t,x) = (w_0-z_0)(x^2_0) \hspace{.5mm} \text{exp}\big(  \int_0^t \frac{\partial g (\theta_1, u)(\sigma, x^2(\sigma,x^2_0))}{\partial \theta} z_x(\sigma,x^2(\sigma,x^2_0)) \hspace{.5mm} \text{d}\sigma \big),       \]
				which, along with Assumption \ref{p-assumption}, implies the desired conclusion.\end{proof}
			
			Next, we show that the local sound speed $\sqrt{P'(\rho)} $ is also less than the light speed $c$.
			\begin{lemma}\label{light}
				For the $C^1$ solution of the Cauchy problem $\eqref{RE}$-$\eqref{P}$ with \eqref{initial1}, there holds $\sqrt{P'(\rho)} < c$.
			\end{lemma}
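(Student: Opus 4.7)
The plan is to read off the bound directly from the transport of the Riemann invariants along characteristics, combined with the monotonicity of the map $\rho \mapsto w - z$.

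First, I would observe that from \eqref{ru}--\eqref{srho} one has the identity
\[
w - z \;=\; 2\int_0^\rho \frac{\sqrt{P'(\sigma)}}{\sigma + P(\sigma)/c^2}\,\mathrm{d}\sigma \;=\; 2\Jrel(\rho),
\]
and since $P>0$ and $P'>0$ for $\rho>0$, the function $\Jrel:[0,\infty) \to [0,\Jrel(\infty))$ is strictly increasing and hence invertible on its range. Thus a pointwise upper bound on $w-z$ immediately gives a pointwise upper bound on $\rho$, and by the monotonicity of $\sqrt{P'(\cdot)}=k\sqrt{\gamma}\,\rho^{(\gamma-1)/2}$, a pointwise upper bound on the local sound speed.

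Next, I would use \eqref{REODE}: since $w^\backprime = 0$ and $z^\prime = 0$, the quantities $w$ and $z$ are conserved along $\lambda_2$- and $\lambda_1$-characteristics respectively. As long as the $C^1$ solution exists on $[0,T]\times\mathbb{R}$, the characteristics $x^1(\cdot)$ and $x^2(\cdot)$ can be traced back to $t=0$ (the slopes $\lambda_i$ are bounded since $|u|<c$ in Lemma \ref{lemma5.1} and $\sqrt{P'}$ will be shown bounded, but we only need that the characteristics are defined and reach the initial line, which follows from the $C^1$ regularity of the solution). Let $x^2_0$, $x^1_0$ be the feet of the backward $\lambda_2$- and $\lambda_1$-characteristics through $(t,x)$. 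Then
\[
w(t,x) \;=\; w_0(x^2_0) \;\leq\; w_{\max}, \qquad z(t,x) \;=\; z_0(x^1_0) \;\geq\; z_{\min},
\]
which are finite by Assumption \ref{p-assumption}.

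Combining the two steps,
\[
w(t,x)-z(t,x) \;\leq\; w_{\max}-z_{\min},
\qquad \text{hence}\qquad
\rho(t,x) \;\leq\; \Jrel^{-1}\!\Big(\tfrac{w_{\max}-z_{\min}}{2}\Big).
\]
Applying $\sqrt{P'(\cdot)}$, which is increasing in $\rho$ for $\gamma>1$, yields
\[
\sqrt{P'(\rho(t,x))} \;\leq\; \sqrt{P'\!\left(\Jrel^{-1}\!\Big(\tfrac{w_{\max}-z_{\min}}{2}\Big)\right)} \;<\; c,
\]
the last inequality being exactly the standing hypothesis in Assumption \ref{p-assumption}. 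I do not expect any serious obstacle here: the only subtlety is making sure the backward characteristics from an arbitrary point in the $C^1$ existence region indeed reach the initial line, which is immediate from the local Lipschitz regularity of $\lambda_1,\lambda_2$ on a $C^1$ solution.
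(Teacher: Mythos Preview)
Your proposal is correct and follows essentially the same approach as the paper: both use the conservation of $w$ and $z$ along characteristics to obtain $w-z \le w_{\max}-z_{\min}$, then invoke the monotone relationship between $w-z$ and $\sqrt{P'(\rho)}$ together with Assumption~\ref{p-assumption}. The only cosmetic difference is that the paper uses the explicit polytropic formula \eqref{Pprimeformularel}, i.e.\ $\sqrt{P'(\rho)} = c\sqrt{\gamma}\,\mathrm{Tan}\big(\tfrac{(w-z)(\gamma-1)}{4c\sqrt{\gamma}}\big)$, whereas you phrase it through $\Jrel^{-1}$; the two formulations are equivalent here.
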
\begin{proof} According to \eqref{Pprimeformularel}, Assumption \ref{p-assumption}  and $w-z \leq w_{max} -z_{min}$, one gets
				\be \sqrt{P'(\rho)} \leq c\hspace{.5mm} \sqrt{\g} \hspace{.5mm} \text{\text{Tan}}\Big(\frac{(w_{max}-z_{min})(\g-1)}{4c \sqrt{\g}} \Big)< c,       \ee 
				which can be  directly translated to an upper bound on the density $\rho$,
				\be \rho < c^{\frac{2}{\g-1}}k^{-\frac{2}{\g-1}}\gamma^{-\frac{1}{\g-1}}. \ee \end{proof} 
			
			Lemmas $5.1$-$5.3$ show us  that both $\lambda_1, \lambda_2 \in \mathbb{R}$ and  $\lambda_1 < \lambda_2$, which implies that  the system \eqref{RE}-\eqref{P} is  strictly hyperbolic.
			\vspace{3mm}Now we note down the Riccati equations satisfied by $\xi$ and $\zeta$ defined in Section \ref{subsection2.1}. 
			
			\begin{lemma}\label{5.4} For the $C^1$ solution of the  Cauchy problem $\eqref{RE}-\eqref{P}$ with \eqref{initial1}, under the Assumption \ref{p-assumption}, 		the following Riccati ODEs  hold:
				
				\be \xi^{\prime} = -\big(\e^{-{h}_1}{\lambda}_{1 z}\big)\xi^2, \hspace{2mm} \zeta^{\backprime}= -\big(\e^{-{h}_2}{\lambda}_{2 w}\big)\zeta^2. \ee 
				Moreover, let $x^i(t,x^i_0)$ $(i=1,2)$ be two characteristic curves  starting from $(0,x^i_0)$. One has 
				\[ \frac{1}{\xi(t,x^1(t,x^1_0))} = \frac{1}{\xi(0,x^1_0) } + \int_0^t \big(\mathrm{e}^{-h_1}\lambda_{1z}\big) (\sigma,x^1(\sigma,x^1_0))\hspace{.5mm} \text{d}\sigma.   \] 
				\[ \frac{1}{\zeta(t,x^2(t,x^2_0))} = \frac{1}{\zeta(0,x^2_0) } + \int_0^t \big(\mathrm{e}^{-h_2}\lambda_{2w}\big) (\sigma,x^2(\sigma,x^2_0))\hspace{.5mm} \text{d}\sigma.   \]

			\end{lemma}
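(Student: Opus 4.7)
The plan is to mirror the argument of Lemma \ref{lemma41} (the classical case) in the relativistic setting, since the structural ingredients — the Riemann invariant transport equations \eqref{REODE}, the definition \eqref{h1h2def} of $h_1$, and the weighted quantities \eqref{zwode} — are set up to parallel the non-relativistic analogues exactly. I will only detail the derivation for $\xi$; the argument for $\zeta$ is symmetric under $(z,\lambda_1,h_1,\prime) \leftrightarrow (w,\lambda_2,h_2,\backprime)$.

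First I would differentiate the identity $z^\prime = z_t + \lambda_1 z_x = 0$ with respect to $x$, which produces
\[
\alpha^\prime + \lambda_{1x}\,\alpha = 0,
\]
where $\alpha = z_x$ as in Section \ref{subsection2.1}. Expanding $\lambda_{1x} = \lambda_{1w}\beta + \lambda_{1z}\alpha$ via the chain rule (recall $\lambda_1 = \lambda_1(w,z)$) gives
\[
\alpha^\prime + \lambda_{1w}\,\beta\,\alpha + \lambda_{1z}\,\alpha^2 = 0.
\]
Next I compute $h_1^\prime$ using $h_1 = h_1(w,z)$: since $z^\prime = 0$ and $w^\backprime = 0$, one has $w^\prime = (\lambda_1 - \lambda_2)\beta$, whence
\[
h_1^\prime = h_{1w}\,w^\prime + h_{1z}\,z^\prime = \frac{\lambda_{1w}}{\lambda_1-\lambda_2}(\lambda_1-\lambda_2)\beta = \lambda_{1w}\,\beta,
\]
by the defining relation \eqref{h1h2def} of $h_1$. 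Substituting this into the previous display yields $\alpha^\prime + h_1^\prime\,\alpha + \lambda_{1z}\,\alpha^2 = 0$, and multiplying by $\mathrm{e}^{h_1}$ produces the perfect derivative $(\mathrm{e}^{h_1}\alpha)^\prime = -\mathrm{e}^{h_1}\lambda_{1z}\,\alpha^2$. Writing $\alpha = \mathrm{e}^{-h_1}\xi$ on the right-hand side gives the claimed Riccati equation
\[
\xi^\prime = -\bigl(\mathrm{e}^{-h_1}\lambda_{1z}\bigr)\,\xi^2.
\]

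For the integral formula along $x^1(t,x^1_0)$, I would divide the Riccati equation by $-\xi^2$ (which is legitimate as long as $\xi$ stays nonzero along the characteristic, the blowup scenario being precisely the case we want to detect) to obtain
\[
(1/\xi)^\prime = \mathrm{e}^{-h_1}\,\lambda_{1z},
\]
and then integrate in $t$ along the backward characteristic, yielding the stated formula for $1/\xi$. The analogous calculation using $w^\backprime = 0$, $\beta = w_x$, and the identity $h_2^\backprime = \lambda_{2z}\alpha$ gives the result for $\zeta$.

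There is no genuine obstacle here; the lemma is a direct computational analogue of Lemma \ref{lemma41}, and the relativistic specifics (the precise forms of $\lambda_1,\lambda_2$ in \eqref{lambda12}) never need to be opened up — all one uses is strict hyperbolicity $\lambda_1 < \lambda_2$ (guaranteed by Lemmas \ref{lemma5.1}--\ref{light}) so that $h_1,h_2$ are well defined, together with the structural identities \eqref{REODE} and \eqref{h1h2def}. The deeper work of the section — exploiting this Riccati structure to extract a time-dependent lower bound on $w-z$, and hence on $\rho$, via the artificial quantity $\mathcal{Y}$ of \eqref{Yquantity} — comes \emph{after} this lemma; the lemma itself is the clean, formal starting point.
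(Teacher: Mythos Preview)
Your proof is correct and follows exactly the approach the paper intends: the paper's own proof simply reads ``identical to that of Lemma \ref{lemma41}'' and omits details, and your derivation is precisely the expanded version of that Lemma \ref{lemma41} argument transported to the relativistic variables. The only addition you make is spelling out the chain-rule step $h_1^\prime = h_{1w}(\lambda_1-\lambda_2)\beta = \lambda_{1w}\beta$ explicitly, which the paper's terse proof of Lemma \ref{lemma41} leaves implicit.
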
		
			The proof is identical to that of Lemma \ref{lemma41}. We omit the details.
			In what follows, it turns out that there is a clear distinction in the proof between the cases $\gamma \geq 3$ and the \textit{physical} range $1 \leq \gamma < 3$. Because of that, we will lay down the proof for each of those two cases in separate subsections.

			\subsection{Proof for the case $\gamma \geq 3$ of Theorem \ref{1RE}}\label{subsection5.2}
			\qquad  Instead of working with \eqref{RE}-\eqref{P}, here and throughout we will rephrase the problem entirely in the language of Riemann invariants $w$ and $z$, i.e. system \eqref{REODE}.
			To that end, we must first focus our attention on rewriting $\lambda_1$ and $\lambda_2$ as functions of $w$ and $z$ instead of $\rho$ and $u$.
			
			\begin{lemma}\label{lemma5.5}
				Define the following functions
				
				\begin{gather}\label{fg}
				f(w,z):= \frac{w+z}{c} + \ln\Big( \frac{1- \sqrt{\gamma}\hspace{1mm} \text{Tan}\big(\frac{(w-z)(\gamma-1)}{4 c \sqrt{\gamma}}\big)}{1+\sqrt{\gamma}\hspace{1mm}\text{Tan}\big( \frac{(w-z)(\gamma-1)}{4 c \sqrt{\gamma}}\big) } \Big), \\[3pt]
				g(w,z):=  \frac{w+z}{c} + \ln\Big( \frac{1+ \sqrt{\gamma}\hspace{1mm} \text{Tan}\big( \frac{(w-z)(\gamma-1)}{4 c \sqrt{\gamma}}\big)}{1-\sqrt{\gamma}\hspace{1mm}\text{Tan}\big( \frac{(w-z)(\gamma-1)}{4 c \sqrt{\gamma}}\big) } \Big).
				\end{gather}  For the $C^1$ solution of the  Cauchy problem $\eqref{RE}-\eqref{P}$ with \eqref{initial1}, under the Assumption \ref{p-assumption},   there holds
				
				\be \lambda_1(w,z) = c\hspace{.5mm} \frac{\e^f -1}{\e^f +1}, \hspace{2mm} \lambda_2(w,z) = c\hspace{.5mm} \frac{\e^g-1}{\e^g +1}. \ee
			\end{lemma}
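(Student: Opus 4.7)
The plan is to rewrite the eigenvalues as relativistic velocity compositions of the fluid velocity $u$ with the local sound speed $\sqrt{P'(\rho)}$, and then invoke the hyperbolic tangent addition formula, which neatly linearizes velocity addition in special relativity.

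First, I would combine the definitions \eqref{ru} and \eqref{srho} of $w$ and $z$ to obtain the two clean identities
\begin{equation*}
w+z \;=\; c\,\ln\!\Big(\frac{c+u}{c-u}\Big),\qquad w-z \;=\; 2\int_0^\rho\!\frac{\sqrt{P'(\sigma)}}{\sigma+P(\sigma)/c^2}\,\text{d}\sigma,
\end{equation*}
so that $u/c=\tanh\!\big(\tfrac{w+z}{2c}\big)$, i.e. the rapidity associated with $u$ is exactly $(w+z)/(2c)$. The second identity, combined with formula \eqref{Pprimeformularel}, then gives the companion relation $\sqrt{P'(\rho)}/c=\sqrt{\gamma}\,\text{Tan}\!\big(\tfrac{(w-z)(\gamma-1)}{4c\sqrt{\gamma}}\big)$, which depends only on $w-z$. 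By Lemma \ref{light} the right-hand side is strictly less than $1$ (in fact, this is what forces the parameter regime in Assumption \ref{p-assumption}), so we may legitimately define its inverse hyperbolic tangent.

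Next, I would recall the relativistic velocity addition law in the form
\begin{equation*}
\tanh(a\pm b)=\frac{\tanh a\pm\tanh b}{1\pm\tanh a\tanh b},
\end{equation*}
and apply it with $\tanh a=u/c$ and $\tanh b=\sqrt{P'(\rho)}/c$. From \eqref{lambda12}, one obtains
\begin{equation*}
\lambda_2/c=\tanh(a+b),\qquad \lambda_1/c=\tanh(a-b),
\end{equation*}
so that, using $\tanh^{-1}(x)=\tfrac12\ln\!\big(\tfrac{1+x}{1-x}\big)$,
\begin{equation*}
\lambda_2/c=\tanh(g/2),\qquad \lambda_1/c=\tanh(f/2),
\end{equation*}
with $f$ and $g$ exactly the expressions \eqref{fg} of the statement. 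The stated formulas $\lambda_i=c\,\tfrac{e^{\cdot}-1}{e^{\cdot}+1}$ then follow from the identity $\tanh(y/2)=\frac{e^y-1}{e^y+1}$.

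The only genuine obstacle is keeping track of the signs and ensuring $|u|<c$ and $\sqrt{P'(\rho)}<c$ strictly, so that both $\tanh^{-1}$'s, and hence the logarithms in \eqref{fg}, are well defined. This is exactly what Lemmas \ref{lemma5.1} and \ref{light} provide, so once those lemmas are invoked the argument is essentially a direct computation; the rest is bookkeeping.
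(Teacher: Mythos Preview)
Your proposal is correct and is essentially the same argument as the paper's: both recognize that $\lambda_1,\lambda_2$ arise from the relativistic velocity addition of $u$ and $\pm\sqrt{P'(\rho)}$, translate this into the additivity of rapidities (equivalently, the logarithmic identity $\ln\frac{c+\lambda_i}{c-\lambda_i}=\ln\frac{c+u}{c-u}\pm\ln\frac{c+\sqrt{P'}}{c-\sqrt{P'}}$), and then substitute via \eqref{Pprimeformularel}. The only cosmetic difference is that you phrase the computation in terms of $\tanh$ and its addition law, whereas the paper works directly with the logarithms.
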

			
			\begin{proof}
				We notice that, the functions $\lambda_1, \lambda_2$, written in terms of $\rho$ and $u$, are reminiscent of (in fact identical to) the relativistic addition formulae for $u$ and $\sqrt{P'(\rho)}$. In particular, 
				\begin{gather}
				\ln\big(\frac{c+\lambda_1}{c-\lambda_1}\big) = \ln \big( \frac{c+u}{c-u}\big) + \ln\big( \frac{c-\sqrt{P'}}{c+\sqrt{P'}} \big)=\tilde{f}(\rho,u)=f(w,z), \\
				\ln\big(\frac{c+\lambda_2}{c-\lambda_2}\big) = \ln \big( \frac{c+u}{c-u}\big) + \ln\big( \frac{c+\sqrt{P'}}{c-\sqrt{P'}} \big)=\tilde{g}(\rho,u)=g(w,z), \label{tafg}
				\end{gather}
				which, along with \eqref{Pprimeformularel}  and solving for $\lambda_1, \lambda_2$,  yields the desired relations.
			\end{proof}

			Now we are ready to give the  proof for the case $\gamma \geq 3$ of Theorem \ref{1RE}.
			
			\begin{proof} Assume now, without loss of generality\footnote{If instead $w_0'(x) <0$ the proof is precisely the same after relabelling the corresponding variables.}, that there exists $x \in \mathbb{R}$ such that $z_0'(x) <0$. 			 
				According to Lemma \ref{5.4}, what we need to show is just  the divergence of the integral \[\int_0^\infty (\e^{-h_1}\lambda_{1z})(\sigma,x^1(\sigma,x^1_0))\hspace{1mm} \text{d} \sigma.\]  For this purpose, we divide the the rest of the proof into two steps. 
				
				\textbf{Step 1}: The detailed formula of $\e^{-h_1}\lambda_{1z}$. 
				Introduce, for convenience, the notation 
				\[ Y= \frac{(w-z)(\gamma-1)}{4 \hspace{.5mm} c \hspace{.5mm} \sqrt{\g}}.   \]
				It follows from direct calculations that 
				\be      \lambda_{1w} = \frac{\e^{\frac{w+z}{c}}\big(2-2\g + (1+\g)\text{Cos}(2 \hspace{.5mm} Y)\big) \text{Sec}(Y)^2      }{\big(1+\e^{\frac{w+z}{c}} -(\e^{\frac{w+z}{c}}-1)\hspace{.5mm} \sqrt{\gamma}\hspace{.5mm} \text{\text{Tan}}(Y) \big)^2},\ee 
				and 
				\be \lambda_1 - \lambda_2 =\frac{ 8 \hspace{.5mm}c\hspace{.5mm} \e^{\frac{w+z}{c}} \sqrt{\g}\hspace{.5mm} \text{\text{Tan}}(Y)}{-(1+\e^{\frac{w+z}{c}})^2 +(\e^{\frac{w+z}{c}}-1)^2\hspace{.5mm} \g\hspace{.5mm} \text{\text{Tan}}(Y)^2}. \ee Upon simplification, we have 
				
				\be \label{complicatedexpression} \frac{\lambda_{1w}}{\lambda_1 - \lambda_2} = \frac{\big(2-2\g + (1+\g) \text{Cos}(2Y) \big)\big( (\e^{\frac{w+z}{c}} -1)\sqrt{\g}+ (1+\e^{\frac{w+z}{c}})\text{Cot}(Y)\big) \text{Cosec}(Y) \text{Sec}(Y)}{8 \hspace{.5mm} c \hspace{.5mm}\gamma \hspace{.5mm}  (\e^{\frac{w+z}{c}}-1) - 8 \hspace{.5mm} c\hspace{.5mm} \sqrt{\gamma} \hspace{.5mm}(1+\e^{\frac{w+z}{c}})\text{Cot}(Y)    }. \ee 
				It should be pointed out that the complicated expression \eqref{complicatedexpression} can be explicitly integrated with respect to $w$, which provides  us with an explicit form for the function $h_1$ satisfying \eqref{h1h2def}:			\be  \begin{split} h =& \frac{3\gamma-1}{2\gamma-2}\hspace{.5mm} \text{ln}(\text{Cos}(Y)) + \frac{\gamma-3}{2\gamma-2} \hspace{.5mm}\text{ln}(\text{Sin}(Y)) + \frac{w-z}{2c} \\
					&- \text{ln} \Big( \big(1+   \e^{\frac{w+z}{c}}\big)\hspace{.5mm}\text{Cos}(Y) - \big(-1+ \e^{\frac{w+z}{c}}\big) \hspace{.5mm} \text{Sin}(Y)            \Big). \end{split}\ee
				Befo
				
				Also we notice that  the term  \[ \big(1+   \e^{\frac{w+z}{c}}\big)\hspace{.5mm}\text{Cos}(Y) - \big(-1+ \e^{\frac{w+z}{c}}\big) \hspace{.5mm} \text{Sin}(Y) \] contained inside the $\text{ln}$-function is positive, as $\text{\text{Tan}}(Y) <\frac{1}{\sqrt{\g}}<1$ because of \eqref{Pprimeformularel} and Assumption \ref{p-assumption}.
				
				It follows from the direct calculation that 
				\be \label{lambda1z}\lambda_{1z}= \frac{ \e^{\frac{w+z}{c}}\hspace{.5mm}(1+\g)\hspace{.5mm}\text{Cos}(2Y)\hspace{.5mm}\text{Sec}(Y)^2}{\big(1+\e^{\frac{w+z}{c}} -(\e^{\frac{w+z}{c}}-1)\hspace{.5mm}\sqrt{\g}\hspace{.5mm}\text{\text{Tan}}(Y)\big)^2}.
				\ee It is, at this point, more useful to actually rewrite $\e^{-h_1}\lambda_{1z}$ in terms of the original $(\rho,u)$-coordinates. Define for convenience
				
				\[y = \frac{k\rho^{(\g-1)/2}}{c},\] then the following formula holds:
				
				\be \label{explicitformula1}\e^{-h_1}\lambda_{1z}= \frac{c \hspace{.5mm} \e^{-\frac{2\sqrt{\gamma} \text{Arctan}(y)}{\g-1} }(c+u)(\g+1) \big(\frac{y}{\sqrt{y^2+1}}\big)^{\frac{3-\g}{2\g-2}} (1+y^2)^{\frac{\g+1}{4\g-4}}(1-y^2)        }{2c^2 - 2u\sqrt{P'(\rho)}}>0.           \ee 
				
				\textbf{Step 2}: the uniform  lower bound of  $\e^{-h_1}\lambda_{1z}$.  We analyze the above formulae of $\e^{-h_1}\lambda_{1z}$  	term by term: 		
				\begin{itemize}
					\item the term $c \hspace{.5mm} \e^{-\frac{2\sqrt{\gamma} \text{Arctan}(y)}{\g-1} }$ can be  bounded below by $c \hspace{.5mm} \mathrm{e}^{-\frac{\pi \sqrt{\gamma}}{\gamma-1}}$;

					\item the term $\frac{(c+u)(\gamma+1)}{2c^2 -2 u\sqrt{P'(\rho)}}$  can be uniformly  bounded below  by a positive constant $C_0$ depending only on the initial data and $c$, because of Lemmas \ref{lemma5.1} and \ref{light};
					
					\item the term $\big(\frac{y}{\sqrt{y^2+1}}\big)^{\frac{3-\g}{2\g-2}}(1+y^2)^{\frac{\g+1}{4\g-4}}$  is bounded below by $1$ for all $\g \geq 3$;

					\item the term $1-y^2$  can be bounded below by 
					$1 -\frac{1}{\g}$,
					as $\sqrt{P'(\rho)}<c$ implies \be \label{crucial} y<\g^{-\frac{1}{2}}, \ee
					which will play an important role in  dealing with the physical case $1\leq \g \leq 3$ later.

				\end{itemize}
				
				Then, according to the solution's formula shown in Lemma \ref{5.4} and the uniform lower bound of $e^{-h_1}\lambda_{1z}$ obtained above,  our theorem thus follows for the case $\g \geq 3$.
				
			\end{proof}
			
			\begin{remark}\label{remark4}
				It is easy to see by the above discussion that the term which will ultimately dictate the divergence, or the (hopefully not to be encountered!) convergence  of the integral of $e^{-h}\lambda_{1z}$ in time is  \be \label{Yquantity} \big( \frac{k \rho^{(\gamma-1)/2}/c}{\sqrt{1 + \frac{k^2  \rho^{\gamma-1}}{c^2}}}\big)^{\frac{3-\gamma}{2\gamma-2}}\big(1+ \frac{k^2 \rho^{\gamma-1}}{c^2}\big)^{\frac{\gamma+1}{4\gamma-4}} := \mathcal{Y}.\ee 
			\end{remark}

			\subsection{Proof for the case  $1\leq \g< 3$ of Theorem \ref{1RE}}
			
			\qquad Our starting point is Remark \ref{remark4} from Subsection \ref{subsection5.2}. We observe  that the behavior of $\mathcal{Y}$ ultimately dictates whether or not the integral $\int_0^\infty \big(\e^{-h_1} \lambda_{1z}\big)(\sigma,x^1(\sigma,x^1_0)) \hspace{0.5mm} \text{d}\sigma$ diverges. In particular, it becomes clear that in order to prove the Theorem \ref{1RE} for  $1\leq \g < 3$,  we are required to give a proper time-dependent lower bound on $\mathcal{Y}$, a function itself of the density $\rho$, strong enough such that $\int_0^\infty \mathcal{Y}(\sigma, x^1(\sigma))\hspace{.5mm} \text{d}\sigma = \infty$.

			Before showing  the detailed proof, we first give  a clear outline of our strategy:
			\begin{itemize} 
			\item  (\textrm{1})\quad We rewrite the equations \eqref{REODE} in the form of  the difference $(w-z)$ of the Riemann invariants:
			\be  \label{wwzz} (w-z)'  = (w-z)_t + \lambda_1 (w-z)_x = (\lambda_1 - \lambda_2)w_x, \ee 
			which implies that, 
	\be \label{startingpoint}(w-z)' = \frac{\lambda_1- \lambda_2}{\mathrm{e}^{h_2}}\zeta   \geq - \frac{\lambda_2- \lambda_1}{\mathrm{e}^{h_2}} Q_2,  \ee
	under the assumption that $\zeta=\e^{h_2} w_x$ has a uniform upper bound $Q_2$ independent of the time. Actually, such kind a bound is established in Subsection 5.3.1;

				\item (\textrm{2})\quad In order to introduce a suitable ODEs inequality for $\mathcal{Y}$, we first obtain an   ODEs inequality for $y$ from \eqref{startingpoint}, which requires us to      rewrite     the quantities $w-z$ and $\frac{\lambda_2- \lambda_1}{\mathrm{e}^{h_2}}$    explicitly in terms of $\rho, u$.  To this end, there is a crucial observation  that should be pointed out, that   in $\frac{\lambda_2- \lambda_1}{\mathrm{e}^{h_2}}$,  all the terms involving $u$ have a uniform upper and lower bound.  This can be seen in Section 5.3.2;

				\item (\textrm{3})\quad Based on the analysis on $y$ and the relation between $y$ and $\mathcal{Y}$, we successfully  introduce a proper ODEs inequality for $\mathcal{Y}$, which  could  effectively  control the behaviour of the density with respect to time. This can be seen in Section 5.3.3;
								
				\item (\textrm{4})\quad Finally, we show that the estimates obtained above  are indeed sufficiently strong so that the desired quantity \eqref{Yquantity} has a divergent integral over time along the characteristic curve. Then we can obtain the desired conclusion stated in Theorem \ref{1RE}.
			\end{itemize}With that in mind, let us begin to show the detailed proof step by step.

			\subsubsection{Upper bound of the weighted gradients of the Riemann invariants}
			
			\qquad Now we need to give the 	upper bound of the weighted gradients of the Riemann invariants.		
			
			\begin{lemma}\label{Q1Q2lemma} Define the non-negative constants
				\be Q_1 := max\begin{Bmatrix} 0, \sup_x \xi(x,0) \end{Bmatrix}, \hspace{2mm} Q_2 := max \begin{Bmatrix} 0, \sup_x \zeta(x,0)\end{Bmatrix}.   \ee For the $C^1$ solution of the system $\eqref{RE}$-$\eqref{P}$,  under Assumption \ref{p-assumption}, 	one has
				\be \xi(x,t) \leq Q_1 , \hspace{2mm} \zeta(x,t) \leq Q_2. \ee \end{lemma}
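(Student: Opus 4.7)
The plan is to deduce the bounds $\xi \leq Q_1$ and $\zeta \leq Q_2$ directly from the Riccati ODEs of Lemma \ref{5.4} combined with the strict positivity of the coefficient functions $\mathrm{e}^{-h_1}\lambda_{1z}$ and $\mathrm{e}^{-h_2}\lambda_{2w}$, exactly parallel to how Proposition \ref{propositionmaxclas} was obtained in the classical case.

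First, I would verify that both coefficients are strictly positive throughout the slab $[0,T]\times\mathbb{R}$ in which the $C^1$ solution lives. For $\mathrm{e}^{-h_1}\lambda_{1z}$ this is essentially the content of the explicit formula (\ref{explicitformula1}), which factors the expression as a positive exponential, a rational piece with $2c^2 - 2u\sqrt{P'(\rho)}$ in the denominator, and a product of powers of $y = k\rho^{(\gamma-1)/2}/c$ together with the factor $(1-y^2)$. Each factor is strictly positive: $c>0$, $c+u>0$ and $c^2 > u\sqrt{P'(\rho)}$ by Lemmas \ref{lemma5.1} and \ref{light}, while $y>0$ follows from $\rho>0$ (Lemma \ref{lemma5.2}) and $1-y^2>0$ follows again from Lemma \ref{light}, which in fact yields $y^2 < 1/\gamma \leq 1$. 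The companion formula for $\mathrm{e}^{-h_2}\lambda_{2w}$ is obtained by exactly the same simplification used to derive (\ref{explicitformula1}), with the roles of the two characteristic families exchanged, and yields an analogous strictly positive expression.

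With this positivity in hand, the Riccati ODEs $\xi^{\prime} = -(\mathrm{e}^{-h_1}\lambda_{1z})\xi^2$ and $\zeta^{\backprime} = -(\mathrm{e}^{-h_2}\lambda_{2w})\zeta^2$ force $\xi$ to be non-increasing along the $1$-characteristic through any $(0,x^1_0)$, and similarly $\zeta$ to be non-increasing along the $2$-characteristic through any $(0,x^2_0)$, since in each case the right-hand side is $\leq 0$. Equivalently, one can read this off from the integrated forms given in Lemma \ref{5.4}. Concretely: if $\xi(0,x^1_0) \leq 0$, then monotonicity keeps $\xi(t,x^1(t,x^1_0)) \leq 0 \leq Q_1$, while if $\xi(0,x^1_0) > 0$, then $1/\xi(0,x^1_0) > 0$ and adding the nonnegative integral $\int_0^t (\mathrm{e}^{-h_1}\lambda_{1z})(\sigma, x^1(\sigma,x^1_0))\, \text{d}\sigma$ preserves positivity, so $\xi(t,x^1(t,x^1_0)) \leq \xi(0,x^1_0) \leq \sup_x \xi(0,x) \leq Q_1$. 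Since every $(t,x)$ in the slab lies on such a characteristic, the uniform bound $\xi \leq Q_1$ follows. The argument for $\zeta \leq Q_2$ is identical.

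The main (and essentially the only) obstacle is the positivity of the weights $\mathrm{e}^{-h_1}\lambda_{1z}$ and $\mathrm{e}^{-h_2}\lambda_{2w}$; once that is secured, the Riccati structure does all the remaining work and no new ideas are required. Positivity in turn rests crucially on the a priori bounds $|u|<c$ and $\sqrt{P'(\rho)}<c$ from Lemmas \ref{lemma5.1}--\ref{light}, which in particular ensure $y^2 < 1/\gamma < 1$ and hence prevent the otherwise delicate factor $(1-y^2)$ from changing sign.
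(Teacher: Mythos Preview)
Your proof is correct and follows essentially the same approach as the paper: both reduce the claim to the positivity of $\mathrm{e}^{-h_1}\lambda_{1z}$ and $\mathrm{e}^{-h_2}\lambda_{2w}$, then invoke the Riccati ODEs of Lemma~\ref{5.4}. The only cosmetic difference is that the paper verifies positivity via the intermediate $(w,z)$-coordinate expression~\eqref{lambda1z} and the observation $\mathrm{Cos}(2Y)>0$ (which is equivalent to $1-y^2>0$ since $Y=\mathrm{Arctan}(y)$), whereas you invoke the fully simplified $(\rho,u)$-coordinate formula~\eqref{explicitformula1} directly.
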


	\begin{proof}  The key to the proof lies in showing that $\e^{-h_1}\lambda_{1z}$ and $\e^{-h_2}\lambda_{2w}$ are non-negative. For this purpose, according to \eqref{lambda1z} and the formula 
				\[    \lambda_{2w} = \frac{    \e^{\frac{w+z}{c}}\hspace{.5mm}        (1+\g)\hspace{.5mm} \text{Cos}(2Y)\hspace{.5mm} \text{Sec}(Y)^2}{\big(1+\e^{\frac{w+z}{c}}  + (\e^{\frac{w+z}{c}}-1 )\hspace{.5mm}  \sqrt{\g}\hspace{1mm} \text{\text{Tan}}(Y)\big)^2  },        \] one gets that what we need is $\text{Cos}(2Y) > 0$, which, actually, can be obtained quickly from \eqref{crucial}, and   the following  formulas
\[Y= \frac{(w-z)(\g-1)}{4 \hspace{.5mm} c \hspace{.5mm} \sqrt{\g}}= \text{Arctan}\Big(\frac{k \rho^{(\g-1)/2}}{c}\Big)\quad \text{and} \quad  \text{Cos}(2\text{Arctan}(x)) = \frac{1-x^2}{1+x^2}.    \]

\par \noindent Thus,  the conclusion of this lemma follows  from the  Riccati ODEs established in Lemma \ref{5.4}.	
				
			\end{proof}		
			
		It should be pointed out that, so far, \eqref{startingpoint} has been proved.

			\subsubsection{Establishing ODE inequality of $y$}
			
			 \qquad As mentioned before, now  we need to rewrite     the quantities $w-z$ and $\frac{\lambda_2- \lambda_1}{\mathrm{e}^{h_2}}$    explicitly in terms of $\rho, u$.			
		First, according to \eqref{Pprimeformularel} and \eqref{fg}, one has
			\begin{align} 
			f= \ln\Big(\frac{c+u}{c-u}\Big) + \ln \big( \frac{1- \sqrt{\gamma} \frac{ k \rho^{(\gamma-1)/2}}{c}      }{1+ \sqrt{\gamma} \frac{ k \rho^{(\gamma-1)/2}}{c} } \big), \\ g= \ln\Big(\frac{c+u}{c-u}\Big) + \ln \big( \frac{1+ \sqrt{\gamma} \frac{ k \rho^{(\gamma-1)/2}}{c}      }{1- \sqrt{\gamma} \frac{ k \rho^{(\gamma-1)/2}}{c} } \big).
			\end{align}
			Second, from  \eqref{P} and \eqref{lambda12}, one can get 
\begin{align}
			\lambda_1 = \frac{      c^2(u -k \hspace{.5mm} \sqrt{\gamma} \hspace{.5mm} \rho^{(\gamma-1)/2})}{c^2  - k\hspace{.5mm} u \sqrt{\gamma} \hspace{.5mm}\rho^{(\gamma-1)/2}}, \\ \lambda_2 =\frac{      c^2(u +k \hspace{.5mm} \sqrt{\gamma} \hspace{.5mm} \rho^{(\gamma-1)/2})}{c^2  + k\hspace{.5mm} u \sqrt{\gamma} \hspace{.5mm}\rho^{(\gamma-1)/2}}, 
			\end{align}whence the following simplified expression for $\lambda_2 - \lambda_1$ follows:
			
			\be \lambda_2 - \lambda_1     = \frac{2c^2\hspace{.5mm} k\hspace{.5mm} (c^2-u^2)\hspace{.5mm} \sqrt{\gamma}\hspace{.5mm} \rho^{(\gamma-1)/2}  }{c^4 - k^2\hspace{.5mm} u^2\hspace{.5mm} \rho^{\gamma-1}}.
			\ee
			 Moreover, the following explicit forms for $h_1$ and $h_2$ can be obtained:
			
			\begin{equation*}\begin{split}
			h_1=& \frac{3\gamma-1}{2\gamma-2}\ln(\text{Cos}(Y)) + \frac{\gamma-3}{2\gamma-2} \ln(\text{Sin}(Y)) + \frac{w-z}{2c} \\
			& - \ln \Big(\big(1+   \e^{\frac{w+z}{c}}\big)\text{Cos}(Y) - \big(-1+ \e^{\frac{w+z}{c}}\big)\hspace{.5mm} \sqrt{\gamma} \hspace{.5mm}\text{Sin}(Y)   \Big), \\
			h_2= &\frac{3\gamma-1}{2\gamma-2}\ln(\text{Cos}(Y)) + \frac{\gamma-3}{2\gamma-2} \ln(\text{Sin}(Y)) + \frac{w-z}{2c}  \\
			& -\ln \Big(\big(\e^{\frac{2w}{c}}+   \e^{\frac{w-z}{c}}\big)\text{Cos}(Y) + \big(\e^{\frac{2w}{c}}-  \e^{\frac{w-z}{c}}\big)\hspace{.5mm}\sqrt{\gamma} \hspace{.5mm}\text{Sin}(Y)   \Big).
			\end{split}
			\end{equation*}
			Here, as always, we denote $Y= \text{Arctan}\big(\frac{k \rho^{(\gamma-1)/2}}{c} \big)= \frac{(w-z)(\gamma-1)}{4 c \sqrt{\gamma}}$.

			Now we are ready to  develop one    ODEs inequality for $y$ from \eqref{startingpoint}.			
			\begin{lemma}\label{lemma5.7}
			For the $C^1$ solution of the  Cauchy problem $\eqref{RE}-\eqref{P}$ with \eqref{initial1}, under Assumption \ref{p-assumption}, 			
			there holds \be \label{yode} y' \geq -C_{g}\hspace{.5mm}  y^{\frac{\g+1}{2\g-2}}\hspace{.5mm} (1+y^2)^{3/2}, \ee
				for some positive constant $C_g$ independent of the time. 
			\end{lemma}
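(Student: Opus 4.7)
The plan is to translate the differential inequality \eqref{startingpoint} on $(w-z)'$ that has already been established in Subsection 5.3.1 into a differential inequality on $y$. The essential observation is that, via \eqref{Pprimeformularel}, $y$ and $w-z$ are related through
$$Y = \mathrm{Arctan}(y) = \frac{(w-z)(\gamma-1)}{4c\sqrt{\gamma}},$$
so that $w-z = \tfrac{4c\sqrt{\gamma}}{\gamma-1}\,\mathrm{Arctan}(y)$. Differentiating along the backward characteristic therefore yields
$$(w-z)' = \frac{4c\sqrt{\gamma}}{\gamma-1}\cdot\frac{y'}{1+y^2},$$
and, combining with \eqref{startingpoint} from Subsection 5.3.1,
$$y' \geq -\frac{(\gamma-1)(1+y^2)}{4c\sqrt{\gamma}}\cdot\frac{\lambda_2-\lambda_1}{\mathrm{e}^{h_2}}\,Q_2.$$
Thus the lemma reduces to obtaining a clean upper bound on $(\lambda_2-\lambda_1)/\mathrm{e}^{h_2}$ of the form $C\,y^{(\gamma+1)/(2\gamma-2)}\sqrt{1+y^2}$ with $C$ uniform in time.

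Next, I would insert the explicit formulas for $\lambda_2-\lambda_1$ and $h_2$ displayed in Subsection 5.3.2. Writing $\cos(Y)=(1+y^2)^{-1/2}$ and $\sin(Y)=y(1+y^2)^{-1/2}$ and simplifying the numerator of $\mathrm{e}^{h_2}$, one finds that the powers of $1+y^2$ collapse because $\tfrac{3\gamma-1}{2\gamma-2}+\tfrac{\gamma-3}{2\gamma-2}=2$, leaving
$$\mathrm{e}^{h_2} = \frac{y^{(\gamma-3)/(2\gamma-2)}\,\mathrm{e}^{(w-z)/(2c)}}{\sqrt{1+y^2}\,\bigl[(\mathrm{e}^{2w/c}+\mathrm{e}^{(w-z)/c})+(\mathrm{e}^{2w/c}-\mathrm{e}^{(w-z)/c})\sqrt{\gamma}\,y\bigr]}.$$
Likewise, substituting $k\rho^{(\gamma-1)/2}=cy$ in $\lambda_2-\lambda_1$ gives
$$\lambda_2-\lambda_1 = \frac{2c(c^2-u^2)\sqrt{\gamma}\,y}{c^2 - u^2 y^2}.$$
Taking the quotient and collecting the exponents of $y$ via $1-\tfrac{\gamma-3}{2\gamma-2}=\tfrac{\gamma+1}{2\gamma-2}$ then produces the exponent $(\gamma+1)/(2\gamma-2)$ on $y$ and the factor $\sqrt{1+y^2}$, with all remaining terms being functions purely of $u$, $w$, $z$.

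The last step is to verify that this residual factor is uniformly bounded. Lemma~\ref{lemma5.1} provides $|u|<c$ uniformly, Lemma~\ref{light} yields $y^2 < 1/\gamma$, and Assumption~\ref{p-assumption} together with $w^\backprime=0=z^\prime$ bounds $w$, $z$ (hence $w\pm z$) in sup-norm. Consequently $c^2-u^2y^2$ is bounded away from zero, and the exponentials $\mathrm{e}^{2w/c}$, $\mathrm{e}^{(w-z)/c}$, $\mathrm{e}^{(w-z)/(2c)}$ are sandwiched between positive constants. This produces the uniform estimate
$$\frac{\lambda_2-\lambda_1}{\mathrm{e}^{h_2}} \leq C\, y^{(\gamma+1)/(2\gamma-2)}\sqrt{1+y^2},$$
which upon insertion delivers $y' \geq -C_g\, y^{(\gamma+1)/(2\gamma-2)}(1+y^2)^{3/2}$.

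The main obstacle I anticipate is purely notational: keeping accurate track of how the powers of $\cos(Y)$ and $\sin(Y)$ collapse in $\mathrm{e}^{h_2}$, and then confirming that the bracket $(\mathrm{e}^{2w/c}+\mathrm{e}^{(w-z)/c})+(\mathrm{e}^{2w/c}-\mathrm{e}^{(w-z)/c})\sqrt{\gamma}\,y$ in the denominator of $\mathrm{e}^{h_2}$ stays strictly positive and bounded away from zero. The positivity is not immediate from signs alone since the second summand can be negative if $\mathrm{e}^{2w/c} < \mathrm{e}^{(w-z)/c}$, but combining with $\sqrt{\gamma}\,y < 1$ (from Lemma~\ref{light}) and the uniform upper/lower bounds on $w\pm z$ yields the required positive lower bound, completing the argument.
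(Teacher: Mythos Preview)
Your proposal is correct and follows essentially the same route as the paper: start from \eqref{startingpoint}, bound $(\lambda_2-\lambda_1)/\mathrm{e}^{h_2}$ by $C\,y^{(\gamma+1)/(2\gamma-2)}\sqrt{1+y^2}$, and convert the resulting inequality on $(w-z)'$ into one on $y'$ via $w-z=\tfrac{4c\sqrt{\gamma}}{\gamma-1}\mathrm{Arctan}(y)$. The only cosmetic difference is that the paper first records the compact $(\rho,u)$--form \eqref{5.31} of $(\lambda_2-\lambda_1)/\mathrm{e}^{h_2}$, which makes the uniform bounds \eqref{cc1} on the $u$-dependent factor $\tfrac{c+u}{c^2-u\sqrt{P'(\rho)}}$ and on $\exp\!\big(\tfrac{2\sqrt{\gamma}\,\mathrm{Arctan}(y)}{\gamma-1}\big)$ immediate, whereas you carry the $(w,z)$-exponentials through and bound them at the end; the two computations are equivalent (note a small slip: the denominator of your $\lambda_2-\lambda_1$ should read $c^2-\gamma u^2 y^2$, which is still uniformly positive by Lemmas~\ref{lemma5.1} and~\ref{light}).
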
  \begin{proof} First, it follows from the  direct  computations that the following simplified form for  $\frac{\lambda_2 - \lambda_1}{ \e^{h_2}}$ holds: 
			\be \label{5.31} \frac{ \lambda_2 - \lambda_1}{\e^{h_2}}= \hspace{.5mm} \frac{4 c \hspace{.5mm} \e^{ \frac{2\hspace{.5mm} \sqrt{\gamma}\hspace{.5mm} \text{Arctan}(y) }{\gamma-1}    } \hspace{.5mm}  \hspace{.5mm} (c+u)\hspace{.5mm}  c\hspace{.5mm} \sqrt{\gamma}\hspace{.5mm}  y\hspace{.5mm}  \big(\frac{y}{\sqrt{y^2+1}}\big)^\frac{3-\gamma}{2\gamma-2}  (1+y^2)^\frac{\gamma+1}{4\gamma-4}                  }{           c^2- u \sqrt{P'(\rho)}},                     \ee where $y=\frac{k \rho^{(\gamma-1)/2}}{c}$.    
			
			Second, according to Lemmas \ref{lemma5.1}-\ref{light},   one can obtain that 
\begin{equation}\label{cc1}
\begin{split}
C^{-1}_g\leq &  \e^{ \frac{2\hspace{.5mm} \sqrt{\gamma}\hspace{.5mm} \text{Arctan}(y) }{\gamma-1}    }\leq C_g, \quad \text{and} \quad C^{-1}_g\leq  \frac{c+u}{c^2-u \sqrt{P'(\rho)}}\leq C_g,	
\end{split}		
\end{equation}
for some positive constant $C_g$ independent of the time, which, along with \eqref{startingpoint}, implies 
\be \label{5.34} (w-z)' \geq - C_{g}\hspace{.5mm}   y\hspace{.5mm}  \big(\frac{y}{\sqrt{y^2+1}}\big)^\frac{3-\gamma}{2\gamma-2}  (1+y^2)^\frac{\gamma+1}{4\gamma-4} = - C_{g}\hspace{.5mm}  y^{\frac{\gamma+1}{2\gamma-2}}\hspace{.5mm}  (1+y^2)^{\frac{1}{2}}.   \ee

Notice that 
$$w-z= \frac{4 \hspace{.5mm}  c \hspace{.5mm}  \sqrt{\gamma}}{\gamma-1}\text{Arctan}(y),\quad \text{and} \quad (w-z)' = C_{g} \frac{y'}{y^2+1},   $$
so that  we can  rewrite \eqref{5.34} as  \eqref{yode}.
\end{proof}

			\subsubsection{Establishing one ODE inequality of $\mathcal{Y}$}

			\qquad 	We now recall that what determines the convergence/divergence of the integral of $\e^{-h_1}\lambda_{1z}$ in time is the variable $\mathcal{Y}$ from \eqref{Yquantity}, which can be rewritten as 
			
			\be \mathcal{Y}=\big(\frac{y}{\sqrt{y^2+1}}\big)^\frac{3-\g}{2\g-2} (1+y^2)^{\frac{\g+1}{4\g-4}}. \ee 
			
			At this point, without loss of generality\footnote{If instead $w_0'(x) <0$ the proof is precisely the same after relabelling the corresponding variables.}, that there exists $x \in \mathbb{R}$ such that $z_0'(x) <0$. 			 
				According to Lemma \ref{5.4}, what we need to show is just  the divergence of the integral \[\int_0^\infty (\e^{-h_1}\lambda_{1z})(\sigma,x^1(\sigma,x^1_0))\hspace{1mm} \text{d} \sigma.\] 			
	Then, according to fact on $\mathcal{Y}$ mentioned above, our task therefore reduces to showing that $$\int_0^\infty \Big(\big(\frac{y}{\sqrt{y^2+1}}\big)^\frac{3-\g}{2\g-2} (1+y^2)^{\frac{\g+1}{4\g-4}}\Big)(t,x^1(t,x^1_0))\text{d}t$$
	 diverges, based on  Lemma \ref{lemma5.7}.  
	 
	 However, the explicit solution of the differential equation 
	 $$
	 y' =-C_{g}\hspace{.5mm}  y^{\frac{\g+1}{2\g-2}}\hspace{.5mm} (1+y^2)^{3/2}	 
	 $$
 is very hard to handle, as it involves hypergeometric functions. We instead adopt an indirect approach and look at a transport equation for the variable $\mathcal{Y}$ itself:
			
			\be \mathcal{Y}= \big(\frac{y}{\sqrt{y^2+1}}\big)^\frac{3-\g}{2\g-2} (1+y^2)^{\frac{\g+1}{4\g-4}}= y^\frac{3-\g}{2\g-2} (1+y^2)^\frac{1}{2} . \ee
			It turns out that, one can obtain the following  Riccati-type inequality for $\mathcal{Y}$:		
			\begin{lemma}\label{lemma5.8}
		For the $C^1$ solution of the  Cauchy problem $\eqref{RE}-\eqref{P}$ with \eqref{initial1}, under the Assumption \ref{p-assumption},  there holds 
		$$\mathcal{Y}' \geq - C_{g}\hspace{.5mm} \mathcal{Y}^2$$ for some universal constant $C_{g}$ independent of the time.
			\end{lemma}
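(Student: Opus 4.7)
The plan is to derive the inequality by chain rule from the ODE inequality for $y$ in Lemma \ref{lemma5.7}, exploiting the fact that $\mathcal{Y}$ is a monotone function of $y$ on the admissible range. First I would compute $\mathcal{Y}_y$ by writing $\mathcal{Y} = y^{a}(1+y^{2})^{1/2}$ with $a = \frac{3-\gamma}{2\gamma-2}$, which for $1<\gamma<3$ is strictly positive. A direct differentiation gives
\begin{equation*}
\mathcal{Y}_y \;=\; y^{a-1}(1+y^{2})^{-1/2}\bigl[a + (a+1)y^{2}\bigr],
\end{equation*}
so $\mathcal{Y}_y > 0$ on $\{y>0\}$. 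Because of this monotonicity, the one-sided lower bound $y^{\prime} \geq -C_{g}\, y^{(\gamma+1)/(2\gamma-2)}(1+y^{2})^{3/2}$ from Lemma \ref{lemma5.7} transfers, via the chain rule $\mathcal{Y}^{\prime} = \mathcal{Y}_y\, y^{\prime}$, to
\begin{equation*}
\mathcal{Y}^{\prime} \;\geq\; -\,C_{g}\, y^{\,a-1+(\gamma+1)/(2\gamma-2)}\,\bigl[a + (a+1)y^{2}\bigr]\,(1+y^{2}).
\end{equation*}

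Next I would simplify the exponent on $y$ using the identity
\begin{equation*}
a - 1 + \frac{\gamma+1}{2\gamma-2} \;=\; \frac{3-\gamma}{2\gamma-2} + \frac{\gamma+1}{2\gamma-2} - 1 \;=\; \frac{2}{\gamma-1} - 1 \;=\; \frac{3-\gamma}{\gamma-1} \;=\; 2a,
\end{equation*}
so that
\begin{equation*}
\mathcal{Y}^{\prime} \;\geq\; -\,C_{g}\,\bigl[a+(a+1)y^{2}\bigr]\, y^{2a}(1+y^{2}) \;=\; -\,C_{g}\,\bigl[a+(a+1)y^{2}\bigr]\,\mathcal{Y}^{2},
\end{equation*}
since $\mathcal{Y}^{2} = y^{2a}(1+y^{2})$ by definition. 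This is the key algebraic coincidence that makes the $\mathcal{Y}$-substitution effective: the offending hypergeometric structure of the direct ODE for $y$ disappears once everything is expressed through $\mathcal{Y}$.

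Finally, I would absorb the bracket $[a+(a+1)y^{2}]$ into the constant. By Lemma \ref{light} we have the uniform upper bound $y < \gamma^{-1/2}$, so $a+(a+1)y^{2}$ is bounded above by a constant depending only on $\gamma$. Redefining $C_{g}$ accordingly, we arrive at $\mathcal{Y}^{\prime} \geq -C_{g}\,\mathcal{Y}^{2}$. I do not foresee a genuine obstacle: the whole argument is a clean chain-rule manipulation, and the only delicate point is verifying the exponent identity $a-1+\tfrac{\gamma+1}{2\gamma-2}=2a$, which is precisely what is engineered into the definition \eqref{Yquantity} of the auxiliary quantity $\mathcal{Y}$.
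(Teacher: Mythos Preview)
Your proof is correct and follows essentially the same approach as the paper: both compute $\mathcal{Y}_y$ directly, multiply by the lower bound on $y'$ from Lemma~\ref{lemma5.7}, recognize the result as $-C_g\,\mathcal{Y}^2$ times the factor $(\gamma+1)y^2+(3-\gamma)$ (which is your $(2\gamma-2)[a+(a+1)y^2]$), and then absorb that factor into the constant via the bound $y<\gamma^{-1/2}$ from Lemma~\ref{light}. The only cosmetic difference is your use of the shorthand $a=\tfrac{3-\gamma}{2\gamma-2}$, which makes the exponent bookkeeping a bit cleaner.
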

			\begin{proof} First, it follows from the direct calculation that 
\be \mathcal{Y}' = \frac{y^\frac{5-3\g}{2\g-2}\big((\g+1)y^2 +(3-\g)\big) y' }{(2\g-2) \sqrt{y^2+1}}, \ee
which, along with \eqref{yode}, implies that  
\be \mathcal{Y}'\geq -C_{g}\hspace{.5mm}  y^\frac{6-2\g}{2\g -2} \hspace{.5mm} (1+y^2)\hspace{.5mm} \big((\g+1)y^2 +(3-\g)\big),    \ee 
Actually, the above ODE inequality  can equivalently be rewritten as
				\be \label{finalode}\mathcal{Y}' \geq - C_{g}\hspace{.5mm}\mathcal{Y}^2 \big((\g+1)y^2 +(3-\g)\big). \ee At this point it is important to observe the following:
				\vspace{3mm}
				
			Second, it follows from the fact   $\sqrt{P'(\rho)}< c$  that 
			$$y= \frac{k \rho^{(\g-1)/2}}{c} < \g^{-\frac{1}{2}}.$$
			 Therefore \[(\g+1) y^2 + (3-\g) \leq \frac{\gamma+1}{\gamma} + (3-\g),\]
			 which, along with \eqref{finalode}, implies that 
\be \mathcal{Y}' \geq - C_{g}\hspace{.5mm}  \mathcal{Y}^2,\ee
 for some universal constant $C_{g}$ independent of the time.

\end{proof} 
							
			\subsubsection{Mass-density's lower bound estimates and formation of singularity}		
				
		\qquad  Now, based on the conclusions obtained in Sections 5.3.1-5.3.3, we are ready to finish the proof of Theorem \ref{1RE}.
			
			Actually, from Lemma \ref{lemma5.8}, one can obtain that 
			\be \label{finally} \mathcal{Y}(t) \geq \frac{1}{C_1 + C_2t}, \ee 
	for some universal constants $C_{i}$  $(i=1,2)$ independent of the time.			
			
			From \eqref{explicitformula1}, one can get 
			\be \label{explicitformula1final}\e^{-h_1}\lambda_{1z}= \frac{c \hspace{.5mm} \e^{-\frac{2\sqrt{\gamma} \text{Arctan}(y)}{\g-1} }(c+u)(\g+1) (1-y^2)        }{2c^2 - 2u\sqrt{P'(\rho)}}\mathcal{Y}=\mathcal{H}\mathcal{Y}>0.           \ee 			
	From the analysis shown in Step 2 of  the proof for the case $\gamma \geq 3$ of Theorem \ref{1RE}	in Section 5.2,  one obtains 
	$$
	C^{-1}_g \leq \mathcal{H} \leq C_g, 
	$$	
	for some universal constant $C_g$ independent of the time.	Also, we know that  $\mathcal{Y}(0)$ is positive.
	
	Then, finally, according to the solution's formula shown in Lemma \ref{5.4}, the desired conclusion stated in Theorem \ref{1RE} for $1<\gamma<3$ has been proved.

	\begin{remark}
		Notice that $\mathcal{Y} = C \rho^{\frac{3-\gamma}{4}} \sqrt{1+\frac{k^2\rho^{\gamma-1} }{c^2}}< 2 C \rho^{\frac{3-\gamma}{4}},$ because of the upper bound on $\rho$. Together with \eqref{finally}, we obtain the same bound for $\rho$ as in the classical case:
		
		\[  \rho(t,x) = O(1+t)^{-\frac{4}{3-\gamma}}.    \]
	\end{remark}
				
			\section{Formation of singularities for the (3+1)-dimensional RE}\label{section6}
			\qquad   Let $d=3$ in \eqref{RE} and \eqref{initial1}. In this section we will do some studies on the formation of singularities for the $(3+1)$-dimensional relativistic fluids. 
			\subsection{Proof of Theorem \ref{coo2}}

			\qquad In this subsection, we always assume that   $(\rho, u)(t,x)$ is the  regular solution  in $[0, T_m]\times\mathbb{R}^3$ defined  in Definition \ref{d1} to the Cauchy problem  \eqref{RE}-\eqref{P} and \eqref{initial1}.		
			Before showing the  singularity formation caused by the    so-called  \textit{isolated mass group}, defined in Definition \ref{local}, we first consider the time evolution of the vacuum domain.  For this purpose, next we introduce one physical  quantity, namely the \textit{particle number}.
			
			\begin{definition}\label{particlenumber}
				Define the particle number $n$ by 
				\be \label{equa}
				n(\rho)=n(1)\exp\Big(\int_1^\rho \frac{\text{d}\sigma}{\sigma + \frac{P(\sigma)}{c^2}} \Big).
				\ee
			\end{definition}
			
			In the following lemma, we will show that the evolution of the vacuum still can be tracked by the particle path defined by \eqref{gobn}.
			\begin{lemma}\label{vacuumequavalence}
				\be \label{equalence}
				n(\rho)=0 \quad  \text{if and only if }\quad \rho=0.
				\ee
				Moreover, one has 
				\be \label{nformula}
				\frac{n}{\sqrt{1-u^2/c^2}}(t, x(t;x_0))=\frac{n}{\sqrt{1-u^2/c^2}}(0, x_0)	\exp\Big(\int_0^t -\text{div}u(\sigma, x(\sigma;x_0))\text{d}\sigma\Big),\ee
				where   $x(t;x_0)$ is the particle path starting from $(0,x_0)$.  Also, if $\rho(0, x_0)=0$, then one gets
				$$
				\rho(0, x_0)=n(0, x_0)=n(t, x(t;x_0))=\rho(t, x(t;x_0))=0 \quad \text{for} \quad t\in [0,T_m].$$
			\end{lemma}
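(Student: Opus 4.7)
The plan is as follows. For the equivalence \eqref{equalence}, since $n(\rho)$ is defined by an exponential for $\rho>0$, it suffices to analyze $\rho\to 0^+$: with $P(\sigma)=k^2\sigma^\gamma$ and $\gamma\geq 1$, the integrand $(\sigma+P(\sigma)/c^2)^{-1}$ is asymptotic to $1/\sigma$ near the origin, so $\int_1^\rho d\sigma/(\sigma+P(\sigma)/c^2)\to -\infty$ as $\rho\to 0^+$, forcing $n(\rho)\to 0$. Extending by continuity via $n(0):=0$, together with strict positivity of $n$ on $\{\rho>0\}$, yields the dichotomy.

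The bulk of the work lies in deriving \eqref{nformula}. Set $\Gamma:=(1-|u|^2/c^2)^{-1/2}$, $H:=\rho+P(\rho)/c^2$, and $N:=n\Gamma$. My plan is to show $N$ satisfies the conservation law
\[\partial_t N+\mathrm{div}(Nu)=0,\]
from which \eqref{nformula} follows by integrating the equivalent form $(d/dt)N(t,x(t;x_0))=-N(t,x(t;x_0))\,\mathrm{div}\,u(t,x(t;x_0))$ along the particle path. Using the identities $\tilde{\rho}=H\Gamma^2$ and $\hat{\rho}=\tilde{\rho}-P/c^2$, the first equation of \eqref{RE}, after expanding by the chain rule and substituting $\partial_tP=P'(\rho)\partial_t\rho$, can be put in the form
\[H\,\frac{D_t\Gamma^2}{\Gamma^2}+D_t\rho+\frac{1}{c^2}\Bigl(\frac{|u|^2}{c^2}\partial_tP+u\cdot\nabla P\Bigr)+H\,\mathrm{div}\,u=0,\]
where $D_t:=\partial_t+u\cdot\nabla$. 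The momentum equation of \eqref{RE}, after using the first equation to replace $\partial_t\tilde{\rho}+\mathrm{div}(\tilde{\rho}u)$ by $\partial_t(P/c^2)$, takes the nonconservative form $\tilde{\rho}D_tu+(u/c^2)\partial_tP+\nabla P=0$; dotting with $u$ and invoking $u\cdot D_tu=(c^2/\Gamma^2)\cdot D_t\Gamma/\Gamma$ then yields
\[\frac{D_t\Gamma}{\Gamma}=-\frac{1}{c^2H}\Bigl(\frac{|u|^2}{c^2}\partial_tP+u\cdot\nabla P\Bigr).\]
The defining property of $n$ gives $D_tn/n=D_t\rho/H$. Substituting the last two identities into the form derived from the first equation shows that everything collapses to
\[\frac{D_tn}{n}+\frac{D_t\Gamma}{\Gamma}+\mathrm{div}\,u=0,\]
which is precisely the conservation law for $N$. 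In any region where $\rho\equiv 0$, both sides of the conservation law vanish trivially, so the identity holds globally on $[0,T]\times\mathbb{R}^3$ by the $C^1$ regularity of the regular solution.

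The last assertion is then immediate: if $\rho(0,x_0)=0$ then by \eqref{equalence} we get $n(0,x_0)=0$, hence $N(0,x_0)=0$, and the explicit formula propagates $N\equiv 0$ along the path $x(t;x_0)$; since $\Gamma>0$, this forces $n$, and therefore $\rho$, to vanish along the path. The main obstacle is the algebraic bookkeeping in the middle step: it is not a priori evident that the $\partial_tP$ and $u\cdot\nabla P$ contributions coming from the two equations of \eqref{RE} assemble cleanly into a single conservation law, and the delicate point is that the definition $n'(\rho)=n/(\rho+P/c^2)$ is precisely tuned so that the cancellation occurs. Once this verification is carried out, the rest of the lemma is essentially a standard transport argument along particle paths.
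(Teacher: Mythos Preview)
Your proposal is correct and follows the same route as the paper: establish the particle-number conservation law $\partial_t(n\Gamma)+\text{div}(n\Gamma u)=0$ and integrate it along particle paths. The paper's own proof simply asserts this conservation law (equation \eqref{RE-particle}) without derivation, whereas you carry out in full the algebraic verification that the pressure terms from the mass and momentum equations of \eqref{RE} cancel precisely because of the defining relation $n'(\rho)=n/(\rho+P/c^2)$; this is exactly the computation the paper leaves to the reader.
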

			\begin{proof}
				First, the  equivalence relation \eqref{equalence} follows from the facts  $n(0)=0$ and $\frac{\text{d}n}{\text{d}\rho}>0$.

				Second,  according to the system \eqref{RE}, one can obtain that $n$ satisfies the following equation
				\begin{equation}\label{RE-particle}
				\displaystyle
				\big( \frac{n}{\sqrt{1-u^2/c^2}} \big)_t  + \text{div}\big(\frac{nu}{\sqrt{1-u^2/c^2}} \big) = 0, 
				\end{equation}
				which implies the formula \eqref{nformula}.
			\end{proof}

			For simplicity, we define the following images of $A_0$, $B_0$, and $B_0 \setminus A_0$, respectively, under the flow map of (\ref{gobn}).
						\begin{definition}[\textbf{Particle path and flow map}]\label{kobn}Let $A(t)$, $B(t)$, $B(t)\setminus A(t)$ be the images of $A_0$, $B_0$, and $B_0 \setminus A_0$, respectively, under the flow map of (\ref{gobn}), i.e.,
				\begin{equation*}
				\begin{split}
				&A(t)=\left\{x(t;x_0)|x_0\in A_0\right\},\\[2pt]
				& B(t)=\left\{x(t;x_0)|x_0\in B_0\right\},\\[2pt]
				&B(t)\setminus A(t)=\left\{x(t;x_0)|x_0\in B_0\setminus  A_0\right\}.
				\end{split}
				\end{equation*}
			\end{definition}
			
			 The following lemma establishes the invariance of  the volume $|A(t)|$ for regular solutions.
			\begin{lemma}
				\label{lemma:3.1}
				Suppose that  the initial data $(\rho_0,u_0)(x)$ have an isolated mass group $(A_0,B_0)$.
				Then, for the   regular solution $(\rho,u)(t,x)$ on $\mathbb{R}^3\times[0,T_m)$ to the Cauchy problem  \eqref{RE}-\eqref{P} and \eqref{initial1}, we have
				$$
				|A(t)|=|A_0|, \quad  t\in [0,T_m).
				$$
			\end{lemma}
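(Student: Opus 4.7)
The plan is to show that $\partial A_0$ is transported by pure translation with the constant velocity $\overline{u}_0$, which forces $A(t)$ to be a rigid translate of $A_0$ and hence to have the same volume.

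First, I would observe that since $\overline{A}_0\subset B_0$ and $A_0$ is open, the boundary $\partial A_0$ lies in $B_0\setminus A_0$, where by Definition \ref{local} we have $\rho_0\equiv 0$. Applying Lemma \ref{vacuumequavalence} to each $x_0\in\partial A_0$ then gives $\rho(t,x(t;x_0))=0$ for all $t\in[0,T_m)$. Invoking condition (C) in Definition \ref{d1}, which enforces $u_t+u\cdot\nabla u=0$ wherever $\rho=0$, one gets
\[
\frac{d}{dt}u(t,x(t;x_0))=(u_t+u\cdot\nabla u)(t,x(t;x_0))=0,\qquad x_0\in\partial A_0.
\]
Combined with the boundary data $u_0|_{\partial A_0}=\overline{u}_0$ from \eqref{eq:12131}, this yields $u(t,x(t;x_0))=\overline{u}_0$ and hence $x(t;x_0)=x_0+t\overline{u}_0$ for every $x_0\in\partial A_0$. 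Consequently $\partial A(t)=\partial A_0+t\overline{u}_0=\partial(A_0+t\overline{u}_0)$.

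For the last step, one must pass from this boundary identity to the volume identity $|A(t)|=|A_0|$. Since $u\in C^1$, the flow map $\Phi_t:x_0\mapsto x(t;x_0)$ is a $C^1$-diffeomorphism of $\mathbb{R}^3$, so $A(t)=\Phi_t(A_0)$ is a bounded, connected, open set whose boundary coincides with $\partial(A_0+t\overline{u}_0)$. By the Jordan--Brouwer separation theorem, this boundary divides $\mathbb{R}^3$ into a bounded component $A_0+t\overline{u}_0$ and an unbounded one; since $A(t)$ is bounded, connected, depends continuously on $t$, and satisfies $A(0)=A_0$, a standard open--closed argument forces $A(t)=A_0+t\overline{u}_0$ for all $t\in[0,T_m)$, yielding $|A(t)|=|A_0|$.

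The main (mild) obstacle is precisely this last identification, since knowing only that $\partial A(t)$ is a translate of $\partial A_0$ does not, a priori, determine $A(t)$ itself. Should the Jordan--Brouwer route feel unsatisfactory, an alternative would be a tubular-neighborhood argument: uniqueness of the $C^1$ flow guarantees that interior particle paths starting in $A_0$ cannot cross the boundary paths (which are explicit straight lines $x_0+t\overline{u}_0$), confining $\Phi_t(A_0)$ to the bounded side of $\partial A_0+t\overline{u}_0$ and thereby identifying it with $A_0+t\overline{u}_0$.
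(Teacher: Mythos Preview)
Your proof is correct and follows essentially the same approach as the paper: both arguments show, via Lemma~\ref{vacuumequavalence} and condition~(C) of Definition~\ref{d1}, that each particle path starting on $\partial A_0$ is the straight line $x_0+t\overline{u}_0$, so that $\partial A(t)$ is a rigid translate of $\partial A_0$. The paper's own proof simply records that the difference of any two such boundary trajectories is constant and then declares $|A(t)|=|A_0|$ without further comment; your Jordan--Brouwer/connectedness discussion actually fills in the step the paper glosses over, so your write-up is, if anything, more complete.
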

			
			\begin{proof}
				From  Lemma \ref{vacuumequavalence} and    the definition of  regular solutions,  one has
				\begin{equation}\label{eq:5.3}
				u_t+u\cdot\nabla u=0, \quad \text{in} \quad   B(t)\setminus A(t).
				\end{equation}
				Therefore, $u$ is invariant along the particle path $x(t;x_0)$ with $x_0\in B_0 \setminus A_0$. 
				
				\vspace{3mm}
				\par \noindent For any $x^1_0,\ x^2_0 \in \partial A_0$, we define
				\begin{equation}\label{gobn1}
				\frac{d}{\text{d}t}x^i(t;x^i_0)=u(x^i(t;x^i_0), t),\quad x^i(0;x^i_0)=x^i_0,\quad \text{for} \quad i=1,2.
				\end{equation}
				Then we  have
				\begin{equation}\label{gobn2}
				\frac{d}{\text{d}t}(x^1(t;x^1_0)-x^2(t;x^2_0))=u(x^1(t;x^1_0), t)-u(x^2(t;x^2_0), t)={\bar u}_0-{\bar u}_0=0,
				\end{equation}
				which implies that
				$$
				|A(t)|=|A_0|, \quad  t\in [0,T_m].
				$$
			\end{proof}
			
			 We point out that, although the volume of $A(t)$ is invariant, the vacuum boundary $\partial A(t)$ travels with constant velocity ${\bar u}_0$. The  following well-known Reynolds transport theorem (c.f. \cite{kong}) is useful.
			\begin{lemma}\label{3.1}
				For any $G(t,x)\in C^1(\mathbb{R}^3\times\mathbb{R}^+) $, one has
				$$
				\frac{d}{\text{d}t}\int_{A(t)}  G(t,x)\text{d}x= \int_{A(t)}  G_t(t,x)\text{d}x+\int_{\partial A(t)} G(t,x)(u(t,x)\cdot {\vec n})\text{d}S,
				$$
				where ${\vec n}$ is the outward unit normal vector to $\partial A(t)$ and $u$ is the velocity of the fluid.
			\end{lemma}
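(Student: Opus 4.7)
The plan is to reduce the moving-domain integral to a fixed-domain integral via the flow map $x(t;x_{0})$, differentiate under the integral sign on the fixed domain, and then push the result back to $A(t)$ using the divergence theorem. Since $(\rho,u)\in C^{1}$ on $[0,T_{m})\times\mathbb{R}^{3}$, the flow $x(t;x_{0})$ defined by \eqref{gobn} is a $C^{1}$ diffeomorphism of $A_{0}$ onto $A(t)$, so the Jacobian $J(t,x_{0}):=\det\bigl(\nabla_{x_{0}} x(t;x_{0})\bigr)$ is well-defined, positive, and $C^{1}$ in $t$.

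First I would change variables to write
\begin{equation*}
\int_{A(t)} G(t,x)\,\mathrm{d}x \;=\; \int_{A_{0}} G\bigl(t,x(t;x_{0})\bigr)\, J(t,x_{0})\,\mathrm{d}x_{0}.
\end{equation*}
Since the domain $A_{0}$ is now fixed in time, I can differentiate under the integral, obtaining
\begin{equation*}
\frac{d}{dt}\!\int_{A(t)}\! G\,\mathrm{d}x
=\int_{A_{0}}\!\Bigl[\bigl(G_{t}+u\!\cdot\!\nabla G\bigr)(t,x(t;x_{0}))\, J
+ G(t,x(t;x_{0}))\, J_{t}\Bigr]\mathrm{d}x_{0}.
\end{equation*}
The key identity I would invoke is the Euler identity $J_{t}=(\operatorname{div} u)\circ x(t;x_{0})\cdot J$, which follows from differentiating the Jacobian of the flow of a $C^{1}$ velocity field. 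Plugging this in and changing variables back to $A(t)$ yields
\begin{equation*}
\frac{d}{dt}\!\int_{A(t)}\! G\,\mathrm{d}x
=\int_{A(t)}\!\bigl[G_{t}+u\!\cdot\!\nabla G + G\operatorname{div} u\bigr]\mathrm{d}x
=\int_{A(t)}\!\bigl[G_{t}+\operatorname{div}(Gu)\bigr]\mathrm{d}x.
\end{equation*}

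Finally, an application of the divergence theorem on the bounded $C^{1}$ domain $A(t)$ converts the $\operatorname{div}(Gu)$ term into the boundary flux $\int_{\partial A(t)} G\,(u\cdot\vec{n})\,\mathrm{d}S$, giving the claimed identity. The main technical point to verify — really the only non-routine step — is the Euler identity $J_{t}=J\operatorname{div} u$; everything else is standard change of variables and the divergence theorem. Regularity is not an obstacle here since Definition \ref{d1} provides $u\in C^{1}$, and $\partial A(t)$ remains a $C^{1}$ hypersurface under the $C^{1}$ flow, so both the smoothness of $J$ and the applicability of the divergence theorem on $A(t)$ are automatic.
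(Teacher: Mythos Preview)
Your argument is correct and is the standard derivation of the Reynolds transport theorem: pull back to the fixed reference domain via the flow map, differentiate, use the Euler identity $J_{t}=(\operatorname{div}u)\,J$, push forward, and apply the divergence theorem. The paper itself does not give a proof of this lemma at all; it simply states it as the ``well-known Reynolds transport theorem'' and refers to a fluid-mechanics textbook \cite{kong}, so there is nothing to compare against beyond noting that your proof is exactly the textbook one.
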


	Based the observations in Lemmas \ref{vacuumequavalence}-\ref{3.1}, now we can obtain the following conservation laws of total energy and  total momentum, and the  invariance of the centroid.	
			\begin{lemma}\label{3.2}
				Suppose that  the initial data $(\rho_0,u_0)(x)$ have  an isolated mass group $(A_0,B_0)$,
				then for the   regular solution $(\rho,u)(t, x)$ on $\mathbb{R}^3\times  [0,T_m)$ to the Cauchy problem \eqref{RE}-\eqref{P} with \eqref{initial1} , we have
				$$m(t)=m(0),\quad \mathbb{P}(t)=\mathbb{P}(0),     \ X^*(t)=X^*(0),\ A(t) \subseteq B_{2R_0+|X^*_0|},       \quad \text{for} \quad t\in [0,T_m).
				$$
			\end{lemma}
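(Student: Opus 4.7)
The plan is to apply the Reynolds transport theorem (Lemma~\ref{3.1}) to the quantities $\hat\rho$, $\tilde\rho u$ and $\hat\rho x$ on the moving domain $A(t)$, and then extract the geometric bound on $A(t)$ from the centroid invariance. The decisive preliminary observation is that $\rho\equiv 0$ on $\partial A(t)$ for every $t\in[0,T_m)$: indeed $\partial A_0\subset\overline{B_0\setminus A_0}$, so by continuity $\rho_0=0$ on $\partial A_0$, and Lemma~\ref{vacuumequavalence} propagates this vanishing along each particle path starting from $\partial A_0$. Consequently $P=0$ and $\hat\rho=\tilde\rho$ on $\partial A(t)$, and this is the cancellation that drives each conservation law.

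For total energy I would take $G=\hat\rho$ in Lemma~\ref{3.1} and substitute the first equation of \eqref{RE} in the form $\hat\rho_t=-\operatorname{div}(\tilde\rho u)$. The divergence theorem turns the volume term into a boundary integral; combining it with the Reynolds flux leaves
\[
\frac{d}{dt} m(t) = \int_{\partial A(t)} (\hat\rho-\tilde\rho)(u\cdot\vec n)\, dS = -\frac{1}{c^2}\int_{\partial A(t)} P\,(u\cdot\vec n)\, dS = 0,
\]
so $m(t)=m(0)$. The momentum case is structurally identical: with $G=\tilde\rho u$ and the conservative form of the second equation of \eqref{RE}, the convective boundary term produced by Reynolds cancels the flux of $\tilde\rho u\otimes u$ through $\partial A(t)$, leaving only the pressure term $-\int_{\partial A(t)} P\vec n\, dS$, which vanishes. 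Hence $\mathbb{P}(t)=\mathbb{P}(0)=0$ by the isolated mass group assumption. For the centroid I would take $G=\hat\rho x$, use $(\hat\rho x)_t = -x\operatorname{div}(\tilde\rho u)$, and integrate by parts; the surface terms collapse again thanks to $\hat\rho=\tilde\rho$ on $\partial A(t)$, while the volume term surviving the integration by parts is exactly $\int_{A(t)} \tilde\rho u\, dx=\mathbb{P}(t)=0$. Dividing by the now constant mass yields $X^*(t)\equiv X^*(0)$.

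The final step converts this into the uniform spatial bound. From the proof of Lemma~\ref{lemma:3.1} the boundary $\partial A(t)$ is the rigid translate $\partial A_0+\bar u_0 t$, hence $A(t)=A_0+\bar u_0 t\subset B_{R_0}(\bar u_0 t)$. Since $\hat\rho\geq 0$, the centroid $X^*(t)$ is a convex combination of points of $A(t)$, so $|X^*(t)-\bar u_0 t|\leq R_0$. Combining this with $X^*(t)=X^*_0$ gives $|\bar u_0 t|\leq R_0+|X^*_0|$, and therefore $A(t)\subset B_{R_0+|\bar u_0 t|}\subset B_{2R_0+|X^*_0|}$ for all $t\in[0,T_m)$. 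The main obstacle I expect is the careful bookkeeping in the Reynolds-plus-divergence-theorem calculation, and in particular identifying the correct conservative form of the relativistic system so that the convective fluxes through $\partial A(t)$ really do cancel and so that the remaining boundary integrals vanish because $\rho=0$ there; once this is secured, the geometric bound on $A(t)$ is a clean application of the convex-hull property of the centroid.
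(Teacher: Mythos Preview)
Your proposal is correct and follows essentially the same approach as the paper: Reynolds transport plus the divergence theorem, with all boundary terms vanishing because $\rho=0$ (hence $P=0$, $\hat\rho=\tilde\rho=0$) on $\partial A(t)$, followed by the convex-hull/centroid argument for the spatial bound. Your treatment of the last step is in fact more explicit than the paper's, which simply asserts that $X^*(0)$ lies in the convex hull of $A(t)$ and concludes $A(t)\subseteq B_{2R_0+|X^*_0|}$ without spelling out the intermediate inequality $|\bar u_0 t|\leq R_0+|X^*_0|$.
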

			\begin{proof} From $(\ref{RE})_1$ and Lemma \ref{3.1}, direct computation shows
				\begin{equation*}\begin{split}
				\frac{d}{dt}m(t)=&\int_{A(t)} \hat{\rho}_t\  \text{d}x+\int_{\partial A(t)} \hat{\rho} u\cdot {\vec n}\ \text{d}S\\
				=&\int_{A(t)} -\text{div}(\tilde{\rho} u)\  \text{d}x=\int_{\partial A(t)} -\tilde{\rho} u\cdot {\vec n}\ \text{d}S=0,
				\end{split}
				\end{equation*} 
				which implies that $m(t)=m(0)$.
				
				Similarly, one has				
				\begin{equation*}\begin{split}
				\frac{d}{dt}\mathbb{P}(t)=&\int_{A(t)} (\tilde{\rho} u)_t \text{d}x+\int_{\partial B(t)} \tilde{\rho} u (u\cdot n)\text{d}S\\
				=&\int_{A(t)}\Big( -\text{div}(\tilde{\rho} u\otimes u)-\nabla P\Big) \text{d}x
				=\int_{\partial A(t)} \Big(-\tilde{\rho} u\otimes u-PI_3\Big)\cdot {\vec n}\  \text{d}S=0,
				\end{split}
				\end{equation*}
				which implies that $\mathbb{P}(t)=\mathbb{P}(0)=0$.				
				
				Finally, from the definition of $X^*(t)$, $m(t)=m_0$ and $\mathbb{P}(t)=\mathbb{P}(0)=0$, one has
				\begin{equation*}\begin{split}
				\frac{d}{dt}\Big(\int_{A(t)} x \hat{\rho} \text{d}x\Big)=&\int_{A(t)} x \hat{\rho}_t \text{d}x+\int_{\partial A(t)} x \hat{\rho} (u\cdot n)\text{d}S
				\\
				=&-\int_{A(t)} x \text{div}(\tilde{\rho} u) \text{d}x
				= \int_{A(t)} \tilde{\rho u} \text{d}x=\mathbb{P}(0)=0,
				\end{split}
				\end{equation*}
				which means that $X^*(t)=X^*(0)$. Moreover, since the total energy on $A(t)$ is conserved, thus $X^*(0)$ is contained in the closed convex hull
				of $A(t)$
				from $X^*(t)=X^*(0)$, we easily know that $A(t)\subseteq B_{2R_0+|X^*(0)|}$.

			\end{proof}

			\par \noindent 	We are now ready to give the proof of  Theorem \ref{coo2}:\\
			\begin{proof} First, it follows from the  equation $(\ref{RE})_1$ and the  integration by parts that 
				\begin{equation}\label{eq:3.8}
				\frac{d}{\text{d}t}M(t)=\int_{A(t)} \hat{\rho}_t |x|^2 \text{d}x+\int_{\partial A(t)} \hat{\rho} |x|^2(u\cdot n) \text{d}S=2F(t).
				\end{equation}
				Similarly, according to  Lemma \ref{3.1},  the  equation $(\ref{RE})_2$ and integration by parts, one can obtain that 
				\begin{equation}\label{eq:3.9}
				\begin{split}
				\frac{d}{\text{d}t}F(t)=&\int_{A(t)} (\tilde{\rho} u)_t \cdot x \text{d}x+\int_{\partial A(t)} \tilde{\rho} u\cdot x(u\cdot n) \text{d}S\\
				=&\int_{A(t)} \big( -\text{div}(\tilde{\rho} u\otimes u)-\nabla P\big)\cdot x \text{d}x\\
				=&\int_{A(t)} \tilde{\rho}u^2\text{d}x+3\int_{A(t)} P\text{d}x
				=c^2\int_{A(t)} \Big(\frac{1}{c^2}\tilde{\rho}u^2+ \frac{3P}{c^2}\Big)\text{d}x.
				\end{split}
				\end{equation}
				
				\par \noindent By Jensen's inequality, one has 
				\begin{equation}\label{eq:3.9J}
				\begin{split}
				\int_{A(t)} P\text{d}x\geq |A(0)|P(\bar{m}(t)),
				\end{split}
				\end{equation}
				where $\bar{m}(t)=\frac{\int_{A(t)}\rho \text{d}x}{|A(0)|}$. Then from \eqref{eq:3.9}-\eqref{eq:3.9J}, one obtains 
				\begin{equation}\label{eq:3.9K}
				\begin{split}
				\frac{d^2}{dt^2}M(t)\geq 2c^2\int_{A(t)} \Big(\frac{1}{c^2}\tilde{\rho}u^2\Big)\text{d}x+6 |A(0)|P(\bar{m})\equiv 2c^2 N(t).
				\end{split}
				\end{equation} 
				
				Now we consider the following two cases:
				\begin{itemize}
					\item If $\int_{A(t)} \frac{1}{c^2}\tilde{\rho}u^2\text{d}x\geq \frac{1}{2}m(0)$, one gets
					\begin{equation}\label{eq:3.9L}
					\begin{split}
					N(t)\geq \frac{1}{2}m(0);
					\end{split}
					\end{equation}
					\item  If $\int_{A(t)} \frac{1}{c^2}\tilde{\rho}u^2\text{d}x\leq  \frac{1}{2}m(0)$, then according to 
					\begin{equation}\label{eq:3.9M}
					\begin{split}
					m(t)=m(0)=\int_{A(t)} \hat{\rho} \text{d}x\leq \frac{1}{2}m(0)+\int_{A(t)} \rho \text{d}x,
					\end{split}
					\end{equation}
					one has
					\begin{equation}\label{eq:3.9N}
					\begin{split}
					\int_{A(t)} \rho \text{d}x \geq \frac{1}{2}m(0),
					\end{split}
					\end{equation}
					which implies that 
					\begin{equation}\label{eq:3.9O}
					\begin{split}
					N(t)\geq \frac{3}{c^2} |A(0)|P(\bar{m}(0))\equiv D_0m(0)>0,\end{split}
					\end{equation}
					where $D_0=\frac{3}{c^2 m(0)} |A(0)|P(\bar{m}(0))$.\end{itemize}

					Denote $D=2c^2\min\{\frac{1}{2},D_0\}$, one has 
				\begin{equation}\label{eq:3.9n}
				\begin{split}
				\frac{d^2}{dt^2}M(t)\geq Dm(0)>0.
				\end{split}
				\end{equation}
				Integrating (\ref{eq:3.8}) and (\ref{eq:3.9n}) over $[0,t]$, respectively, one can obtain that 
				\begin{equation}\label{eq:3.1733}
				M(t)\geq M_0+2 F_0t+\frac{1}{2}Dm(0) t^2.
				\end{equation}
				According to Lemma \ref{3.2}, it yields
				\begin{equation}\label{eq:3.1833}
				M(t)=\int_{A(t)} \hat{\rho} |x|^2 \text{d}x \leq  R^2_1m_0,
				\end{equation}
				where $R_1=|X^*(0)|+2R_0$.
				
				\vspace{3mm}
				
				\par \noindent 	Combining (\ref{eq:3.1733}) with (\ref{eq:3.1833}), one has
				\begin{equation}\label{time}
				R^2_1m_0\geq M_0+2 F_0t+\frac{1}{2}Dm(0) t^2,
				\end{equation}
				which means that $ T_m <+\infty$.
			\end{proof}

			\subsection{Proof of Theorem \ref{th333}}	
			
			\begin{proof}
				We denote by $ V(t) $ the evolutioned domain that is the image of $ V$ under the flow map, i.e.,
				\begin{equation}\label{zhi}
				V(t)=\{x|x=x(t;x_0), \quad \forall \ x_0\  \in V\},
				\end{equation}
				where $ x(t; x_0)$ is the particle path starting from $(0,x_0)$.
				It follows from Lemma \ref{vacuumequavalence} that the mass-energy density is simply supported along the particle paths, so
				$$ \rho(t,x)=0,\quad \text{when} \quad x\ \in \ V(t).$$
				Thus, via the Definition \ref{d1} for regular solutions, we deduce that 
				\begin{equation}
				\label{eq:1.2guo}
				u_t+u\cdot \nabla u=0, \quad \text{when} \quad x\ \in \ V(t),
				\end{equation}
				which means that $u$ is a constant along the particle path $x(t; x_0)$.
				Then for any $x\in V(t)$, we obtain that 
				$$
				u(t,x)=u_0(x-tu(t,x)),
				$$
				which immediately implies that 
				\begin{equation}
				\label{eq:1.2fan}
				\nabla u(t,x)=\big(\mathbb{I}_3+t\nabla u_0(x-tu(t,x))\big)^{-1}\nabla u_0,\quad \text{for} \quad x\in V(t).
				\end{equation}
				If there is any $\lambda\in  Sp(\nabla u_0) $ satisfying $\lambda<0$, 
				then from (\ref{eq:1.2fan}), it is obvious that the quantity $\nabla u$ will blow up in finte time, i.e., 
				$$
				T_m<+\infty.
				$$
				
			\end{proof}
			
			\subsection{Proof of Theorem \ref{th:2.20}}
			\qquad In this subsection, we simply denote 
			$$
			\int_{\mathbb{R}^3}f\text{d}x=\int f\text{d}x.			$$
		 Now we are ready to prove Theorem \ref{th:2.20}. 
			\begin{proof} Let $T>0$ be any constant, and    $(\rho,u)\in D(T)$. It  follows from the definitions of  $m(t)$, $\mathbb{P}(t)$ $\hat{\rho}$ and $\tilde{\rho}$  that
			\be \begin{split}
				|\mathbb{P}(0)|=& |\mathbb{P}(t)|\leq \|u(t)\|_{L^\infty(\mathbb{R}^3)} \int \tilde{\rho} \text{d}x
				\leq  \|u(t)\|_{L^\infty(\mathbb{R}^3)} \Big(m(0)+\int_{\mathbb{R}}\frac{P}{c^2}\text{d}x\Big).
			\end{split}
			\ee
			Notice that 
			\be \begin{split}
				\int P\text{d}x=& \Big| \int \int_0^\rho P'(\sigma)\text{d}\sigma\text{d}x\Big|
				\leq \Big| \int \|P'(\rho)\|_{L^\infty(\mathbb{R}^3)}\rho \text{d}x\Big|\\
				\leq & c^2\int \rho\text{d}x\leq c^2 m(t)=c^2 m_0.
			\end{split}
			\ee
			
			\par \noindent 	Then one obtains that there exists a positive constant $C_u=\frac{|\mathbb{P}(0)|}{2m(0)}$ such that
			$$
			\|u(t)\|_{L^\infty(\mathbb{R}^3)}\geq C_u  \quad \text{for} \quad t\in [0,T].
			$$
			Thus one obtains     the desired conclusion as shown in Theorem \ref{th:2.20}.
			\end{proof}

			\vspace{3mm}

			\section{Remarks on the  general pressure law of the 1-dimensional case}\label{section7}
			
			\qquad     	\quad Let $d=1$ in \eqref{RE}-\eqref{CE} and \eqref{initial1}.      We revisit in this section the study of the relativistic Euler equations in $1+1$ dimensions. Its main aim is to enlarge the set of pressure laws $P=P(\rho)$ from those  in \eqref{P} to more general ones which  satisfy  Assumptions   \ref{assumption2}-\ref{assumption3}.	As mentioned before, we first show our basic idea for the classical compressible Euler equations, and then give detailed proof for the  relativistic flow.  Throughout this section, we always   assume that  $P(\rho)$ satisfies the  Assumptions   \ref{assumption2}-\ref{assumption3}.

			\subsection{The classical Euler equations}\label{subsection7.1}
			\subsubsection{Notations and relations.}
			\qquad First, the eigenvalues  $\tilde{\lambda}_i$ $(i=1,2)$, directional derivatives $\partial_-$ and $\partial_+$, the characteristic directions $y^1$ and $y^2$, Riemann variables $\tilde{w}$ and $\tilde{z}$,  $\tilde{h}_1$ and $\tilde{h}_2$,  $\tilde{\alpha}$ and $\tilde{\beta}$,  $\phi$ and $\psi$  are still  given by or satisfy  \eqref{lambdat12}-\eqref{phipsi}.

			Second, we define the function $\Jcl(x) = \int_0^x \frac{\sqrt{P'(\sigma)}}{\sigma}\hspace{.5mm}\text{d}\sigma$, and then $\tilde{w}-\tilde{z} = 2\hspace{.5mm} \Jcl(\rho)$. It is easy to see that Assumption \ref{assumption2} implies $\Jcl$ is strictly increasing. Thus, we can write \[\rho = \Jcl^{-1}(\frac{\w-\z}{2}), \] and  the eigenvalues $\tilde{\lambda}_i$ $(i=1,2)$ can be rewritten as functions of the Riemann invariants:
\[ \tilde{\lambda}_1 = \frac{\w+\z}{2} - \sqrt{\Sigma}, \hspace{2mm} \tilde{\lambda}_2 = \frac{\w+\z}{2} + \sqrt{\Sigma},         \]
where $\Sigma=P'\left(\Jcl^{-1}\big(\frac{\tilde{w}-\tilde{z}}{2}\big) \right)$.

Next we  need  to find suitable  $\hone$ and $\htwo$ satisfying \eqref{hfunctionclas}. It follows from direct calculations that 
			\[  \tilde{\lambda}_{1\w} = \tilde{\lambda}_{2\z} =\frac{1}{2} - \frac{1}{4\sqrt{P'(\rho)}} \hspace{.5mm} P''(\rho) \hspace{.5mm} (\Jcl^{-1})'\big(\frac{\w-\z}{2}\big).      \]
			
			Notice that,   via denoting $\inf_{x\in \mathbb{R}}(w_0 - z_0)=\epsilon>0$ for some constant $\epsilon$,  we can choose\footnote{By a slight abuse of notation, whenever we write $\Sigma$ alone, we mean the expression $P'\left( \Jcl^{-1}\left( \frac{\tilde{w} - \tilde{z}}{2} \right)\right) $ but when we write $\Sigma(\sigma)$ we mean the function $\Sigma(x) = P'\left( \Jcl^{-1}\left( \frac{x}{2} \right)\right)$.}
\[    \hone= \htwo = \frac{1}{4}\ln \Sigma - \int_{\frac{\epsilon}{2}}^{\frac{\w-\z}{2}} \frac{\text{d}\sigma}{2 \sqrt{\Sigma(\sigma)}     } :=\tilde{h},  \]
			which,  along with the same argument used in the proof of Lemma \ref{lemma41}, implies that 
			 \[ \partial_- \phi= - \hspace{.5mm}\e^{-\tilde{h}} \hspace{.5mm}\lambda_{1 \z} \hspace{.5mm}\phi^2,  \hspace{2mm} \partial_+ \psi =-  \hspace{.5mm} \e^{-\tilde{h}} \hspace{.5mm} \tilde{\lambda}_{2\w} \hspace{.5mm}\psi^2.     \]Since $\tilde{\lambda}_{1\z} = \tilde{\lambda}_{2\w} >0$,  according to Assumptions \ref{assumption2}-\ref{assumption3},  there exist positive constants $\tilde{Q}_1, \tilde{Q}_2$ such that
			
			\[ \phi \leq \tilde{Q}_1, \hspace{1mm} \psi  \leq \tilde{Q}_2,  \]
			within the lifespan of the $C^1$ solution.
			
			\subsubsection{Derivation of the desired ODE inequality}	
			
			\qquad According to the same argument used in the proof of Lemma \ref{lowerboundofCE}, we therefore have,
			
			\be \label{7.5} \partial_-(\w-\z) \geq - \frac{2 \sqrt{\Sigma}}{ \sqrt[\leftroot{1}\uproot{3}4]{\Sigma}}\hspace{.5mm} \text{exp}\left( \int_{\frac{\epsilon}{2}}^{\frac{\w-\z}{2}} \frac{\text{d}\sigma}{2 \sqrt{{\Sigma(\sigma)}} } \right)  \tilde{Q}_2,       \ee 
or equivalently  due to $\partial_-(\w-\z)  = 2 \frac{\sqrt{P'(\rho)}}{\rho} \partial_-\rho$,
\be \label{7.5X}\partial_- \rho\geq -2 \hspace{.5mm} \tilde{Q}_2 \hspace{.5mm} \frac{\rho}{ \sqrt[\leftroot{1}\uproot{3}4]{P'(\rho)}    }\hspace{.5mm} \text{exp}\Big( \int_{\frac{\epsilon}{2}}^{\Jcl(\rho)} \frac{\text{d}\sigma}{2 \sqrt{\Sigma(\sigma)} } \Big) .  \ee

			\subsubsection{Lower bound estimates of the mass density.}

			\qquad Now we hope that, based on the above ODE inequality, we  can  obtain the divergence of the  integral of 
\be 
\displaystyle 
\label{7.6}\e^{-\tilde{h}}\tilde{\lambda}_{1\z}  =  \text{exp}\big( \int_{\frac{\epsilon}{2}}^{\Jcl(\rho)} \frac{\text{d}\sigma}{2 \sqrt{\Sigma(\sigma)} } \big) \hspace{.5mm}(G_1(\rho) +G_2(\rho)),  \ee  with respect to the time over $[0,+\infty)$,
where 
\be \label{GG}G_1(\rho) = \frac{1}{2} \hspace{.5mm}P'(\rho)^{-\frac{1}{4}}, \hspace{1.5mm} G_2(\rho) = \frac{1}{4} \hspace{.5mm}\rho\hspace{.5mm} P''(\rho)\hspace{.5mm} P'(\rho)^{-\frac{5}{4}}.        \ee Then similarly to the proof shown in Section 4, we can obtain the if and only if condition on the singularity formation of the classical Euler equations with general pressure law.
For simplicity, we also denote $G(\rho)=G_1(\rho)+G_2(\rho)$.

Next we   give a proper ODEs inequality for $G(\rho)$.  First, it follows from  direct calculations that 
\begin{equation}\label{g12}
\begin{split}
	G_1'(\rho) =& -\frac{1}{8} P'(\rho)^{-\frac{5}{4}} P''(\rho),\\
         G_2' (\rho) =& \frac{1}{4}\hspace{.5mm}( P''(\rho) + \rho\hspace{.5mm} P'''(\rho))\hspace{.5mm}P'(\rho)^{-\frac{5}{4}} - \frac{5}{16}\hspace{.5mm} \rho\hspace{.5mm} P''(\rho)^2\hspace{.5mm} P'(\rho)^{-\frac{9}{4}}.
\end{split}
\end{equation}
which implies that 
			\be G'(\rho) =  \text{exp}\big( \int_{\frac{\epsilon}{2}}^{\Jcl(\rho)} \frac{\text{d}\sigma}{2 \sqrt{\Sigma(\sigma)} } \big) \hspace{.5mm} \frac{\rho^2 P'(\rho) P'''(\rho) - \frac{5}{4} P''(\rho)^2 + \rho P'(\rho) P''(\rho) + P'(\rho)^2}{4 P'(\rho)^{\frac{9}{4}} \rho}. \ee Notice that Assumptions \ref{assumption2}-\ref{assumption3}  implies that 
			
			\be \label{7.11} G'(\rho) \leq  \text{exp}\big( \int_{\frac{\epsilon}{2}}^{\Jcl(\rho)} \frac{\text{d}\sigma}{2 \sqrt{\Sigma(\sigma)} } \big) \hspace{.5mm}\frac{\frac{A}{4}\big( \rho P''(\rho) + 2 P'(\rho)\big)^2}{4 P'(\rho)^{\frac{9}{4}} \rho}\ee

		Now we consider the following two cases:	
	
		\begin{itemize}	
				
\item If $  G'(\rho)> 0$, then we have 
\be \partial_- (G(\rho)) = \hspace{.5mm} G'(\rho)\hspace{.5mm} \partial_- \rho \geq - C_{\text{g}} \hspace{.5mm} G'(\rho) \hspace{.5mm} \frac{\rho}{ \sqrt[\leftroot{1}\uproot{3}4]{P'(\rho)}    }\hspace{.5mm} \text{exp}\hspace{.5mm} \big( \int_{\frac{\epsilon}{2}}^{\Jcl(\rho)} \frac{\text{d}\sigma}{2 \sqrt{\Sigma(\sigma)} } \big).  \ee 
				Moreover,  making use of \eqref{7.11}, we can further get
				
				\be \partial_- \big(G(\rho)\big)\geq - C_{\text{g}}\hspace{.5mm} \text{exp}\big( \int_{\frac{\epsilon}{2}}^{\Jcl(\rho)} \frac{\text{d}\sigma}{2 \sqrt{\Sigma(\sigma)} } \big)^2 \hspace{.5mm} 
				\frac{\big( \rho P''(\rho) + 2 P'(\rho)\big)^2}{4 P'(\rho)^{\frac{5}{2}} } = -C_{\text{g}} G(\rho)^2,\ee  for some positive constant $C_g$ independent of the time. The result follows; \vspace{3mm}
				
				\item If, on the other hand, $G'(\rho) < 0$ then simply use the upper bound on the density, $\rho_{\text{max}}$, to obtain that $G(\rho)$ is uniformly bounded below by the positive constant $G(\rho_{\text{max}})$ along the characteristic curve. Therefore, its integral along that curve diverges. The result follows.
			\end{itemize}
			
			\subsection{The general pressure law for the Relativistic Euler equations}
			\qquad In this subsection, we will give the proof for Theorem \ref{2RE}.
			\subsubsection{Notations and relations.}

			\qquad We note that the directional derivatives $\prime$ and $\backprime$, the characteristic directions $x^1$ and $x^2$, Riemann variables $w$ and $z$,  $h_1$ and $h_2$,  $\alpha$ and $\beta$,  $\xi$ and $\zeta$  are still  given by or satisfy  \eqref{lambda12}-\eqref{zwode}.	Also, $f(w,z)$, $g(w,z)$, $\tilde{f}(\rho,u)$ and $\tilde{g}(\rho,u)$ are given by Lemma \ref{fg}.		Define the helpful function \[\Jrel(x) =  \int_0^x \frac{\sqrt{P'(\sigma)}}{\sigma + \frac{P(\sigma)}{c^2}}\hspace{.5mm} d\sigma.  \]

			Rewriting $f(w,z)$ and $g(w,z)$  in terms of Riemann invariants, we denote
			\begin{gather}
			\text{ln}\left(\frac{c+\lambda_1}{c-\lambda_1}\right) = \frac{w+z}{c} + \text{ln} \left( \frac{c- \sqrt{\Lambda } }{c+ \sqrt{\Lambda }}  \right) := F(w,z), \\\text{ln}\left(\frac{c+\lambda_2}{c-\lambda_2}\right) = \frac{w+z}{c} + \text{ln} \left( \frac{c+ \sqrt{\Lambda } }{c- \sqrt{\Lambda }}  \right) := G(w,z),
			\end{gather}
			where $\Lambda =P'\left(\Jrel^{-1}\big(\frac{w-z}{2}\big) \right) $\footnote{By a slight abuse of notation, whenever we write $\Lambda$ alone, we mean the expression $P'\left( \Jrel^{-1}\left( \frac{\tilde{w} - \tilde{z}}{2} \right)\right) $ but when we write $\Lambda(\sigma)$ we mean the function $\Sigma(x) = P'\left( \Jrel^{-1}\left( \frac{x}{2} \right)\right)$.}.
			It follows from the direct calculation that 
			 \begin{equation}\label{summary}
			 \begin{split}
			\lambda_1(w,z) =& c \left( 1- \frac{2}{\e^{F(w,z)} +1}    \right), \ \  \hspace{.5mm} \lambda_2(w,z) = c \left( 1- \frac{2}{\e^{G(w,z)} +1}    \right),\\
			\lambda_2 - \lambda_1 =& 2\hspace{.5mm} c\hspace{.5mm} \frac{\e^G - \e^F}{\left(\e^F +1\right)\left(\e^G+1\right)} \\
				=&2 \hspace{.5mm} c \hspace{.5mm} \e^{\frac{w+z}{c}} \hspace{.5mm} \frac{        \frac{4 \hspace{.5mm} c \hspace{.5mm} \sqrt{\Lambda } }{c^2 - \Lambda  }    }{       \left( 1+ \e^{\frac{w+z}{c}}\hspace{.5mm} \frac{ c-\sqrt{\Lambda} }{c+\sqrt{\Lambda}     } \right)    \left(1+ \e^{\frac{w+z}{c}}\hspace{.5mm} \frac{ c+\sqrt{\Lambda} }{c-\sqrt{\Lambda} } \right)       },\\
				\lambda_{1w} = &\frac{2\hspace{.5mm}c\hspace{.5mm} \e^{F(w,z)}}{\left( \e^{F(w,z)}+1\right)^2}\hspace{.5mm} {F}_w (w,z),\quad \e^{F(w,z)} = \e^{\frac{w+z}{c}} \frac{c- \sqrt{\Lambda} }{c+ \sqrt{\Lambda}}, \\
				 \end{split} 
				 \end{equation}
				 and 
				 \begin{equation}\label{summary1}						
				 F_w(w,z)  =     \frac{1}{c} - \frac{2c}{c^2 - \Lambda}\cdot \frac{P''\left(\Jrel^{-1}\big(\frac{w-z}{2}\big) \right) \left(\Jrel^{-1}\right)^{'} \left( \frac{w-z}{2}\right) }{4 \hspace{.5mm} \sqrt{\Lambda}}, 
				 \end{equation}

which, imply that 
			\be \label{l1w1} \frac{\lambda_{1w}}{\lambda_1 - \lambda_2} = \frac{\frac{2\hspace{.5mm}c\hspace{.5mm} \e^{F}}{\left( \e^{F}+1\right)^2}\hspace{.5mm} F_w}{-2\hspace{.5mm} c\hspace{.5mm} \frac{\e^G - \e^F}{\left(\e^F +1\right)\left(\e^G+1\right)}} = - \frac{\e^G+1}{\left( \e^F+1\right) \left( \e^G - \e^F \right)}( \e^F )_w = -\left( \frac{1}{\e^F+1}+ \frac{1}{\e^G- \e^F}\right)(\e^F)_w. \ee  
			
		Notice that
\[ \text{ln}\left(\e^{G+F} -\e^{2F} \right)_w = \frac{ 1}{\e^{G+F}-\e^{2F}}\left( \frac{2}{c} \e^{2 \frac{w+z}{c}} - 2\e^F (\e^F)_w \right)  = - \frac{2 (\e^F)_w}{\e^G-\e^F} + \frac{2 \e^{\frac{2(w+z)}{c}}}{c \left(\e^{G+F} - \e^{2F}\right) },   \] 
then one can obtain that 
\[   -\frac{1}{\e^G-\e^F}(\e^F)_w = \frac{1}{2} \text{ln}(\e^{G+F}- \e^{2F})_w - \frac{\e^\frac{2(w+z)}{c}}{c(\e^{\frac{2(w+z)}{c}}-\e^{2F})} .                     \] Then, together with \eqref{l1w1}, one gets
			\be  \frac{\lambda_{1w}}{\lambda_1 - \lambda_2} = -\left( \text{ln}\left( \e^F+1
			\right) \right)_w + \frac{1}{2} \left( \text{ln}\left( \e^{G+F}-\e^{2F}
			\right) \right)_w  - \frac{1}{c(1-\e^{F-G})}.   \ee 
Therefore, we can choose
\be  h_1 = -\text{ln}\left(\e^F +1 \right) + \frac{1}{2} \text{ln}\left(\e^{G+F} - \e^{2F}\right) -   \int_{\epsilon/2}^{\frac{w-z}{2}} \frac{\left(c +\sqrt{\Lambda(\sigma)}\right)^2}{2 \hspace{.5mm}c^2 \hspace{.5mm} \sqrt{\Lambda(\sigma)}}\text{d}\sigma.         \ee Similarly, one can obtain 
\be \lambda_{2z} = \frac{2\hspace{.5mm}c\hspace{.5mm} \hspace{.5mm} \e^G}{\left(\e^G+1\right)^2}\hspace{.5mm}G_z.\ee Then one has 
\be \begin{split}\frac{\lambda_{2z}}{\lambda_2 - \lambda_1} =& \frac{\frac{2\hspace{.5mm}c\hspace{.5mm} \hspace{.5mm} \e^G}{\left(\e^G+1\right)^2}\hspace{.5mm}G_z}{2\hspace{.5mm} c\hspace{.5mm} \frac{\e^G - \e^F}{\left(\e^F +1\right)\left(\e^G+1\right)} }= \frac{\e^F+1}{(\e^G+1)(\e^G -\e^F)}(\e^G)_z \\
=& \left( - \frac{1}{\e^G+1} + \frac{1}{\e^G- \e^F}\right)\left(\e^G\right)_z\\
=&- \left(  \text{ln} \left(\e^G+1\right)  \right)_z + \frac{1}{\e^G-\e^F}\left( \e^G \right)_z.  \end{split} \ee

It follows fromt the direct calculation that 
\be \frac{1}{2}\hspace{.5mm} \text{ln}\left( \e^{2G}- \e^{G+F}        \right)_z = \frac{\left( \e^G\right)_z}{\e^G - \e^F}- \frac{1}{c} \frac{\e^{G+F}}{\e^{2G}-\e^{G+F}}.  \ee 
Then 
\be \frac{1}{\e^G-\e^F} \big(\e^G \big)_z = \frac{1}{2} \hspace{.5mm} \text{ln} \left( \e^{2G} - \e^{G+F} \right)_z + \hspace{.5mm} \frac{1}{c (\e^{G-F}-1)}. \ee 
We can therefore choose 
\be\label{7.33}  h_2 = -\text{ln}\left(\e^G+1\right)+ \frac{1}{2}\hspace{.5mm} \text{ln}\left( \e^{2G} - \e^{G+F} \right)  - \int_{\frac{\epsilon}{2}}^{\frac{w-z}{2}}  \frac{\left(c - \sqrt{\Lambda(\sigma)}\right)^2}{2 \hspace{.5mm} c^2 \hspace{.5mm} \sqrt{\Lambda(\sigma)}}\text{d}s.     \ee

Set, for convenience,
			
			\be \mathcal{I}_1= \int_{\frac{\epsilon}{2}}^{\frac{w-z}{2}}  \frac{\left(c + \sqrt{\Lambda(\sigma)}\right)^2}{2 \hspace{.5mm} c^2 \hspace{.5mm} \sqrt{\Lambda(\sigma)}}\text{d}\sigma , \hspace{2mm}\mathcal{I}_2 =  \int_{\frac{\epsilon}{2}}^{\frac{w-z}{2}}  \frac{\left(c - \sqrt{\Lambda(\sigma)}\right)^2}{2 \hspace{.5mm} c^2 \hspace{.5mm} \sqrt{\Lambda(\sigma)}}\text{d}\sigma.  \ee
			
			According to  \eqref{summary} and \eqref{7.33}, one can obtain 
\be\label{keystep}\begin{split} 			\frac{\lambda_2 - \lambda_1}{\e^{h_2}} =& C_{\text{g}} \hspace{.5mm} \frac{\e^G- \e^F}{\left(\e^G+1\right) \left( \e^F +1\right)} \cdot \frac{\e^G+1}{\sqrt{\e^{2G} - \e^{G+F}}} \cdot \exp(\mathcal{I}_2)\\
				=& C_{\text{g}} \frac{\e^{\frac{w+z}{c}} \frac{4 \hspace{.5mm} c \hspace{.5mm} \sqrt{P'(\rho)} }{c^2-P'(\rho)}}{1+ \e^{\frac{w+z}{c}}\frac{c-\sqrt{P'(\rho)}}{c+\sqrt{P'(\rho)}}}\cdot \frac{c-\sqrt{P'(\rho)}}{\e^{\frac{w+z}{c}}  \sqrt[\leftroot{1}\uproot{3}4]{P'(\rho )}}\cdot \text{exp}(\mathcal{I}_2) \\
				=& C_{\text{g}} \frac{\sqrt[\leftroot{1}\uproot{3}4]{P'(\rho )}\hspace{1mm}\text{exp}(\mathcal{I}_2)}{\left(1+\frac{\sqrt{P'(\rho)}}{c}\right)\left( 1+ \e^{\frac{w+z}{c}}\frac{c-\sqrt{P'(\rho)}}{c+\sqrt{P'(\rho)}} \right)},
			\end{split} \ee 
for some positive constant $C_g$ independent of the time.			
			
			Furthermore,
\be\begin{split} 
				\e^{-h_1}\lambda_{1z} =& \frac{\e^F+1}{\sqrt{\e^{G+F}-\e^{2F}}}\text{exp}(\mathcal{I}_1)  \left( \frac{1}{c} + \frac{2cP''(\rho) \left( \rho + \frac{P(\rho)}{c^2}\right)}{4P'(\rho)(c^2 - P'(\rho))}   \right)   \\ 
				=& \frac{1 + \e^{\frac{w+z}{c}} \frac{c- \sqrt{P'(\rho)}}{c+\sqrt{P'(\rho)}}   }{\sqrt{\e^{\frac{2(w+z)}{c}}} \sqrt{\frac{4 c \sqrt{P'(\rho)}}{\left(c+\sqrt{P'(\rho)}\right)^2}}}\text{exp}(\mathcal{I}_1)  \left( \frac{1}{c} + \frac{2cP''(\rho) \left( \rho + \frac{P(\rho)}{c^2}\right)}{4P'(\rho)(c^2 - P'(\rho))}   \right)  \\ 
				=& C_{\text{g}} \frac{\left(c+\sqrt{P'(\rho)}\right) \left( 1 + \e^{\frac{w+z}{c}} \frac{c-\sqrt{P'(\rho)}}{c+ \sqrt{P'(\rho)}}      \right) }{\e^{\frac{w+z}{c}} \sqrt[\leftroot{1}\uproot{3}4]{P'(\rho )}} \text{exp}(\mathcal{I}_1)  \left( \frac{1}{c} + \frac{2cP''(\rho) \left( \rho + \frac{P(\rho)}{c^2}\right)}{4P'(\rho)(c^2 - P'(\rho))}   \right) \\
				:= &H(\rho).
			\end{split} \ee Define
			
			\be H_1 = \frac{1}{\sqrt[\leftroot{1}\uproot{3}4]{P'(\rho )}},\hspace{2mm}  \quad H_2 = \frac{P''(\rho)\left(\rho + \frac{P(\rho)}{c^2}\right)}{2\left(1- \frac{P'(\rho)}{c^2}\right)P'(\rho)^{\frac{5}{4}}.}  \ee
	then one has 
			
			\[ \e^{-h_1}\lambda_{1z}= C_{\text{g}} \frac{\left(1+\frac{\sqrt{P'(\rho)}}{c}\right) \left( 1 + \e^{\frac{w+z}{c}} \frac{c-\sqrt{P'(\rho)}}{c+ \sqrt{P'(\rho)}}      \right) }{\e^{\frac{w+z}{c}} }\text{exp}(\mathcal{I}_1)  \left( H_1(\rho)+H_2(\rho) \right).      \]
			
	It is easy to check that the conclusions of Lemmas \ref{lemma5.1}-\ref{5.4} and \ref{Q1Q2lemma} still hold for the current  pressure law.		
			
			\subsubsection{Derivation of the desired ODE inequality}

			\qquad First, it is obvious that 
\begin{equation}\label{uplow}
\begin{split}
C^{-1}_g\leq 1+ \frac{\sqrt{P'(\rho)}}{c}\leq C_g,\ \ 
 C^{-1}_g\leq 1 + \e^{\frac{w+z}{c}} \frac{c-\sqrt{P'(\rho)}}{c+ \sqrt{P'(\rho)}} \leq C_g,\ \ C^{-1}_g\leq \e^{\frac{w+z}{c}}\leq C_g,
\end{split}\end{equation}
for some positive constant $C_g$ independent of the time.	

Given two functions $f_1, f_2$,  we denote  $f_1 \approx f_2$ if and only if there are positive constants $C_1$ and $ C_2$ such that
\[ C_1 f_1 \leq f_2 \leq C_2 f_1, \hspace{2mm} \text{pointwisely}.  \]
With this in mind, the above three remarks allow us to conclude that
			
			\begin{equation}
			\e^{-h_1} \lambda_{1z} \approx \exp(\mathcal{I}_1)\left(H_1(\rho)+H_2(\rho)\right).
			\end{equation} 
			
			Second, we observe that 
			\begin{lemma}\label{lemma7.1}
				\[  H_1(\rho)+H_2(\rho) \approx G_1(\rho) + G_2(\rho),             \]where $G_1$ and $G_2$  are given by \eqref{GG}.
			\end{lemma}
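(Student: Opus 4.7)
The plan is to verify the equivalence $H_1+H_2 \approx G_1+G_2$ by establishing $H_1 \approx G_1$ and $H_2 \approx G_2$ separately, which suffices because all four quantities are non-negative (thanks to $P'(\rho)>0$ and $P''(\rho)\geq 0$ from Assumption \ref{assumption2}, together with $\rho>0$ from Lemma \ref{lemma5.2}).

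First, a direct comparison of the definitions gives $H_1 = 2\hspace{.5mm}G_1$, so that part of the equivalence is immediate with constants $C_1=C_2=2$. For the second comparison, a clean algebraic manipulation yields
\[ \frac{H_2}{G_2} = \frac{2\left(1+\frac{P(\rho)}{c^2\hspace{.5mm}\rho}\right)}{1-\frac{P'(\rho)}{c^2}}, \]
so the task reduces to showing that this ratio is uniformly bounded both above and below by positive constants on the range of $\rho$ attained by our $C^1$ solution.

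For the upper bound, the numerator is controlled because the monotonicity of $P'$ (Assumption \ref{assumption2}) together with $P(0)=0$ gives $P(\rho) = \int_0^\rho P'(\sigma)\hspace{.5mm}\mathrm{d}\sigma \leq \rho\hspace{.5mm} P'(\rho)$, so $P(\rho)/(c^2 \rho) \leq P'(\rho)/c^2$, which is itself bounded (as I recall below). For the denominator, the analogue of Lemma \ref{light} in the general pressure setting, based on Assumption \ref{p-assumption} and the monotonicity of $\Jrel$, provides a uniform estimate of the form $P'(\rho) \leq P'\left(\Jrel^{-1}\left(\frac{w_{\max}-z_{\min}}{2}\right)\right) < c^2$, whence $1-P'(\rho)/c^2$ is uniformly bounded below by a positive constant. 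Combining these, $H_2/G_2$ is uniformly bounded above.

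For the lower bound, one simply notes that $P(\rho)/(c^2\rho) \geq 0$ gives $1+P(\rho)/(c^2\rho) \geq 1$, while $P'(\rho)/c^2 > 0$ gives $1-P'(\rho)/c^2 \leq 1$; hence $H_2/G_2 \geq 2$. This establishes $H_2 \approx G_2$. Since $H_1 \approx G_1$ and $H_2 \approx G_2$ with all four quantities non-negative, adding the corresponding two-sided inequalities produces the claimed equivalence $H_1(\rho)+H_2(\rho) \approx G_1(\rho)+G_2(\rho)$. The only non-routine point in the argument is ensuring the uniform positive lower bound on $1-P'(\rho)/c^2$, but this is already handled by the extension of Lemma \ref{light} to general pressure laws under Assumption \ref{p-assumption}.
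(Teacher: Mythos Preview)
Your proof is correct and follows essentially the same route as the paper: both note $H_1 = 2G_1$ and then control the ratio $H_2/G_2 = 2\bigl(1+P(\rho)/(c^2\rho)\bigr)\big/\bigl(1-P'(\rho)/c^2\bigr)$ from above and below using the uniform bound on $P'(\rho)$ coming from the analogue of Lemma~\ref{light}. Your write-up is in fact a bit more careful than the paper's, which invokes a constant labelled $\gamma$ without explanation in the general-pressure setting.
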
 \begin{proof}
				First,  $H_1 \equiv 2 G_1 \geq 0$. Second, 
				$$\frac{2(1+\gamma^2)}{1-\gamma^2}\hspace{.5mm}G_2(\rho)\hspace{.5mm}\geq \hspace{.5mm}H_2(\rho) = 2\hspace{.5mm}G_2(\rho) \hspace{.5mm}\frac{1+\frac{P(\rho)}{c^2 \rho}}{1- \frac{P'(\rho)}{c^2}} \geq 2\hspace{.5mm}G_2(\rho),$$ where one has  used the facts that  $P(0)=0$ and $P'(\rho) \leq c^2\gamma^2$.  Then, one can obtain that 
\[ \frac{1+\gamma^2}{1-\gamma^2}(G_1+G_2) \geq  H_1 +H_2 \geq 2(G_1+G_2), \hspace{2mm} \text{pointwisely}.              \] 
			\end{proof}
			
			 Next, we will show that
	\begin{proposition}
			$\exp(\mathcal{I}_1) \geq C_{\text{g}} \hspace{.5mm} \exp\left( \int_{\frac{\epsilon}{2}}^{J_{\text{clas}}(\rho)} \frac{\text{d}\sigma}{2 \sqrt{\Sigma(\sigma)} } \right)$.
			\end{proposition}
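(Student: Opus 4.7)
The approach is to reduce both integrals in the stated inequality to integrals over the physical density $\tau$ via changes of variables, and then compare the resulting integrands pointwise using only the convexity of $P$.

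First I would change variables in the definition of $\mathcal{I}_1$ via $\sigma = \Jrel(\tau)$. Under this substitution, $d\sigma = \frac{\sqrt{P'(\tau)}}{\tau + P(\tau)/c^2}\,d\tau$ and $\Lambda(\sigma) = P'(\tau)$; since the upper limit $(w-z)/2 = \Jrel(\rho)$ corresponds to $\tau = \rho$, this yields
\[
\mathcal{I}_1 = \int_{\rho^{r}_\epsilon}^{\rho} \frac{(c + \sqrt{P'(\tau)})^2}{2 c^2 (\tau + P(\tau)/c^2)}\,d\tau, \qquad \rho^{r}_\epsilon := \Jrel^{-1}(\epsilon/2).
\]
A parallel substitution $\sigma = \Jcl(\tau)$ in the classical integral produces
\[
\int_{\epsilon/2}^{\Jcl(\rho)} \frac{d\sigma}{2\sqrt{\Sigma(\sigma)}} = \int_{\rho^{c}_\epsilon}^{\rho} \frac{d\tau}{2\tau}, \qquad \rho^{c}_\epsilon := \Jcl^{-1}(\epsilon/2).
\]

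The crux is then the pointwise lower bound
\[
\frac{(c + \sqrt{P'(\tau)})^2}{2 c^2 (\tau + P(\tau)/c^2)} \;\geq\; \frac{1}{2\tau}, \qquad \tau > 0,
\]
which, after clearing denominators and cancelling $c^2\tau$, reduces to $\tau \bigl(2c\sqrt{P'(\tau)} + P'(\tau)\bigr) \geq P(\tau)$. Since $P(0)=0$ and $P'' > 0$ by Assumption \ref{assumption2}, one has $P(\tau) = \int_0^\tau P'(s)\,ds \leq \tau P'(\tau)$, which is in fact stronger than what is required. Integrating the pointwise inequality from $\rho^{r}_\epsilon$ to $\rho$ gives
\[
\mathcal{I}_1 \;\geq\; \int_{\rho^{r}_\epsilon}^{\rho} \frac{d\tau}{2\tau} \;=\; \int_{\rho^{c}_\epsilon}^{\rho} \frac{d\tau}{2\tau} \;+\; \tfrac{1}{2}\ln\frac{\rho^{c}_\epsilon}{\rho^{r}_\epsilon}.
\]
Exponentiating and observing that $\rho^{r}_\epsilon$ and $\rho^{c}_\epsilon$ are constants depending only on $\epsilon$, $P$ and $c$ (and not on the solution) delivers the claim with $C_{\text{g}} = \sqrt{\rho^{c}_\epsilon / \rho^{r}_\epsilon}$.

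The argument is largely elementary, and interestingly requires no appeal to Assumption \ref{assumption3}; the main conceptual hurdle is simply carrying out the two changes of variables consistently with the paper's notational conventions for $\Lambda(\sigma)$ and $\Sigma(\sigma)$, after which the pointwise comparison of integrands is an immediate consequence of the convexity of the pressure.
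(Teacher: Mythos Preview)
Your proof is correct and follows essentially the same route as the paper: both arguments change variables to rewrite $\mathcal{I}_1$ and the classical integral as integrals in the physical density, invoke the convexity estimate $P(\tau)\le \tau P'(\tau)$ to compare integrands pointwise, and absorb the mismatch of lower limits $\Jrel^{-1}(\epsilon/2)$ versus $\Jcl^{-1}(\epsilon/2)$ into the constant $C_{\text{g}}$. The only cosmetic difference is the order of operations---the paper first shifts the lower limit and then compares integrands, while you do the reverse---and your explicit identification $C_{\text{g}}=\sqrt{\rho^{c}_\epsilon/\rho^{r}_\epsilon}$ is a nice touch the paper leaves implicit.
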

			
			\begin{proof}  First, notice that,  from Assumption   \ref{assumption2}, one has		 
		\be\label{pcon} P(\rho) \leq \rho\hspace{.5mm} P'(\rho).\ee		
	
	Second, set $x = J_{\text{rel}}^{-1}(s)$ and substitute the integral variable, then one can rewrite $\exp(\mathcal{I}_1) $ as 
	\[  \exp(\mathcal{I}_1) =   \exp\left(\int_{J_{\text{rel}}^{-1}(\epsilon/2)}^\rho  \Psi(x)  \text{d}x       \right)   \quad \text{with}  \quad  \Psi(x)= \frac{\left(1 + \frac{\sqrt{P'(x)}}{c} \right)^2}{2\left(x + \frac{P(x)}{c^2}\right)}.   \]
	
	Similarly, one has
\[   \exp\left( \int_{\frac{\epsilon}{2}}^{J_{\text{clas}}(\rho)} \frac{\text{d}\sigma}{2 \sqrt{\Lambda(\sigma)} } \right) =   \exp\left(\int_{J_{\text{clas}}^{-1}(\epsilon/2)}^\rho  \frac{1}{2x }   \text{d}x       \right).        \]
				
It follows from \eqref{pcon} that 			
				\[   \left(1 + \frac{\sqrt{P'(x)}}{c} \right)^2\geq \frac{1}{x}\left(x + \frac{P(x)}{c^2}\right).    \]
Then, combining  the above relations, one can obtain that 
				\be\begin{split}  \exp(\mathcal{I}_1) =&   \exp\left(\int_{J_{\text{rel}}^{-1}(\epsilon/2)}^\rho  \Psi(x)   \text{d}x       \right) 
					=  C_{\text{g}}   \exp\left(\int_{J_{\text{clas}}^{-1}(\epsilon/2)}^\rho \Psi(x)  \text{d}x       \right)\geq   C_{\text{g}}  \exp\left(\int_{J_{\text{clas}}^{-1}(\epsilon/2)}^\rho  \frac{1}{2x }   \text{d}x       \right). \end{split} \ee
			\end{proof}\par \noindent This brings us back to the classical expression of Subsection \ref{subsection7.1}.
			\vspace{3mm}
			
	\subsubsection{Lower bound estimate on the mass-energy density} 
	\qquad According to the conclusions obtained in the above two steps, now we give the desired lower bound estimates. First, according to the definitions of $w$ and $z$,  \eqref{startingpoint} and \eqref{keystep}, one can obtain that 
\be \begin{split}\label{7.41} \rho^\prime \geq &- C_{\text{g}}\hspace{.5mm} \exp(\mathcal{I}_2)\hspace{.5mm} \frac{\rho+ \frac{P(\rho)}{c^2}}{P'(\rho)^{1/4}} \geq - C_{\text{g}}\hspace{.5mm} \exp(\mathcal{I}_2)\hspace{.5mm} \frac{\rho}{P'(\rho)^{1/4}} \\
				\geq & - C_{\text{g}}\hspace{.5mm} \exp\left(\int_{J_{\text{clas}}^{-1}(\epsilon/2)}^\rho  \frac{1}{2x }   \text{d}x       \right) \frac{\rho}{P'(\rho)^{1/4}},\end{split} \ee
		where one has  used the fact that $P(\rho) \leq c^2 \hspace{.5mm}  \hspace{.5mm}\rho.$ 
		
		It follows from   \eqref{7.41} that  the integral of the classical expression
\[  \exp\left(\int_{J_{\text{clas}}^{-1}(\epsilon/2)}^\rho  \frac{1}{2x }   \text{d}x       \right) (G_1(\rho) + G_2(\rho) )    \] diverges according to the results obtained in  Subsection \ref{subsection7.1}. Since
			
			\[   \exp(\mathcal{I}_1)(H_1(\rho) +H_2(\rho)) \geq C_{\text{g}} \exp\left(\int_{J_{\text{clas}}^{-1}(\epsilon/2)}^\rho  \frac{1}{2x }   \text{d}x       \right) (G_1(\rho) + G_2(\rho) ),      \] the result follows from the arguments used in Section 7.1.3.

			\section{Appendix}\label{section8}
			
		\qquad In order to support our theory on the singularity formation shown in Section 6, in this appendix, we show the     local-in-time well-posedness of the smooth solution   for the Relativistic Euler equations \eqref{RE} with initial vacuum in multi-dimensional spacetime. The detailed proof can be found in Lefloch-Ukai	 \cite{LeflochUkai}.

			\subsection{Symmetric formulation of \eqref{RE} allowing vacuum}
			
		\qquad First for simplicity, in the following we denote $c_s=\sqrt{P'(\rho)}$ as the sound speed in the fluid.  We refer to
			\begin{equation}
			\label{E:3.3}z_\pm:=S(\rho)\pm R(u)\end{equation} 
			as the generalized Riemann invariant variables,  where \begin{gather}\label{rrss1}
	R(u) =: \frac{c}{2}\hspace{.5mm} \ln\Big(\frac{c+|u|}{c-|u|}\Big),\quad S(\rho)=:\int_0^\rho \frac{\sqrt{P'(\sigma)}}{\sigma + \frac{P(\sigma)}{c^2}} \text{d}\sigma.
	\end{gather}

We also introduce the projection operator and the normalized velocity 
			\begin{equation}
			\label{E:3.4}
			E(u):=\mathbb{I}_3-\tilde{u}\otimes\tilde{u},\quad \quad \tilde{u}=\frac{u}{|u|},		
			\end{equation} where $\mathbb{I}_3$ represents a $3\times 3$ unit matrix. Then one has

			\begin{lemma}
				\label{L:3.1}\text{\cite{LeflochUkai}} In terms of the generalized Riemann invariant variables
				$(z_+,z_-)$ and the normalized velocity $\tilde{u}$ defined in
				\eqref{E:3.3}-\eqref{E:3.4}, the relativistic Euler equations \eqref{RE} take
				the following symmetric form
				\begin{equation}\label{E:2.50}
A_0(W)W_t+\sum_{j=1}^3 A_jW_{x_j}=0,
\end{equation}				
with $W=(z_+,z_-,\tilde{u})$, in which $A^0(W)$ and $A^j(W)$ are
			\begin{equation}
			\label{E:3.23} A^0(W)=\left(
			\begin{array}{ccc}
			a_0&0&0\\
			0&b_0&0\\
			0&0&c_0|u|^2\mathbb{I}_3\\
			\end{array}\right),\quad A^j(W)=\left(
			\begin{array}{ccc}
			a_1\tilde{u}_j&0&a_2|u|e_j\\
			0&b_2\tilde{u}_j&-a_2 |u|e_j\\
			a_2ve^\top_j&-a_2|u|e^\top_j&c_0|u|^2u_j\mathbb{I}_3\\
			\end{array}\right),
			\end{equation}
			and
			\begin{equation}\label{E:3.23a}
			\begin{split}a_0=&1+{|u|c_s}/{c^2},\quad b_0=1-{|u|c_s}/{c^2},\quad
			c_0=\frac{2}{1-{|u|^2}/{c^2}},\\[2pt]
			a_1=&\frac{1-{c_s^2}/{c^2}}{1-{c_s|u|}/{c^2}}(|u|+c_s),\quad b_1=\frac{1-{c_s^2}/{c^2}}{1+{c_s|u|}/{c^2}}(|u|-c_s),\\[2pt]
			a_2=&c_s,\quad e_j=(E_{j1}(u), E_{j2}(u), E_{j3}(u)),\quad j=1, 2, 3.
			\end{split}
			\end{equation}			\end{lemma}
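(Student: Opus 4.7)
The plan is to derive the symmetric form by a direct change of variables from $(\rho,u)$ to $W=(z_+,z_-,\tilde u)$, carrying out the algebra so that the relativistic denominators $1\pm |u|c_s/c^2$ and $1-|u|^2/c^2$ group together in the combinations appearing in \eqref{E:3.23a}. First I would rewrite the two equations of \eqref{RE} in non-conservative form, using $P=P(\rho)$ to convert $P_t, P_{x_j}$ into $c_s^2\rho_t, c_s^2\rho_{x_j}$. Writing $u=|u|\tilde u$ and using the chain rule
\begin{equation*}
\partial_t z_\pm = \frac{c_s}{\rho+P/c^2}\,\partial_t\rho \pm \frac{c}{c^2-|u|^2}\,\partial_t|u|,\qquad \partial_{x_j} z_\pm = \frac{c_s}{\rho+P/c^2}\,\partial_{x_j}\rho \pm \frac{c}{c^2-|u|^2}\,\partial_{x_j}|u|,
\end{equation*}
I would express the gradients of $\rho$ and of $|u|$ as linear combinations of those of $z_\pm$. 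The point is that the specific normalizations of $S(\rho)$ and $R(u)$ in \eqref{rrss1} are precisely those that turn the density-pressure and velocity-magnitude transport into a form where the characteristic speeds $|u|\pm c_s$ appear, modulated by the relativistic factors.

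Second, I would take $\tilde u\cdot$ and $E(u)\cdot$ of the vectorial momentum equation. The $\tilde u$-projection, combined with the continuity equation via two linear combinations, will produce the first two rows of \eqref{E:2.50} and determine the diagonal entries $a_0,b_0,a_1,b_1$ together with the coupling coefficient $a_2$ to the normalized-velocity block. The $E(u)$-projection is orthogonal to $\tilde u$, so it governs the evolution of $\tilde u$ itself (since $\partial_t \tilde u$ and $\partial_{x_j}\tilde u$ are automatically orthogonal to $\tilde u$); after multiplying through by $c_0|u|^2$ to make the time-coefficient symmetric, one obtains the last block of \eqref{E:3.23}. The cross blocks involving $a_2 |u|\,e_j$ come from the appearance of $\nabla|u|$ inside the momentum equation after substituting $u=|u|\tilde u$ and isolating the transverse part through $E(u)$.

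Third, I would check symmetry $A^j=(A^j)^\top$ block by block. The $z_+$--$\tilde u$ and $z_-$--$\tilde u$ coupling coefficients both equal $\pm a_2|u|\,e_j$, and the $\tilde u$--$z_\pm$ coefficients are $\pm a_2|u|\,e_j^\top$; this symmetry is forced by the fact that the term responsible is $c_s^2\nabla\rho$ in the momentum equation, which, once rewritten via $\nabla\rho = (\rho+P/c^2)c_s^{-1}\nabla(z_++z_-)/2$, produces matching off-diagonal entries on both sides. Positivity of $A^0$ is immediate once the $|u|^2$ factor in the $\tilde u$-block is retained (it vanishes only on vacuum, but that is consistent with the allowed degeneracy).

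The main obstacle will be the bookkeeping of relativistic correction factors: in particular, showing that the denominators $(1-|u|^2/c^2)$ and $(1\mp |u|c_s/c^2)$ combine so that the diagonal coefficients of $z_+$ and $z_-$ acquire exactly the forms $a_0,b_0$ and $a_1\tilde u_j,b_1\tilde u_j$, while the off-diagonal blocks simplify to a common $a_2=c_s$. This is essentially an identity-checking exercise once the change of variables is set up correctly, but the relativistic additive structure makes the intermediate expressions cumbersome; isolating the factor $(1-c_s^2/c^2)$ in the characteristic speeds $a_1,b_1$ is the key simplification that makes the final form clean, and it is natural once one remembers that the eigenvalues of \eqref{RE} in one dimension are given by the relativistic addition \eqref{lambda12}.
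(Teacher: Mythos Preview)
The paper does not supply its own proof of this lemma: it is quoted from \cite{LeflochUkai} with citation and no argument, so there is nothing in the paper to compare your derivation against. Your outline is the natural route (and is essentially the computation carried out in \cite{LeflochUkai}): pass to non-conservative form, split $u=|u|\tilde u$, project the momentum equation onto $\tilde u$ and onto its orthogonal complement via $E(u)$, and then form the two linear combinations that diagonalize the $(\rho,|u|)$ block into $(z_+,z_-)$. The only remark is that your proposal remains at the level of a plan; the actual verification that the relativistic factors collapse to exactly $a_0,b_0,a_1,b_1,a_2,c_0$ as stated is a lengthy but routine identity check, and since the paper defers entirely to \cite{LeflochUkai} for this, your sketch is already more detailed than what the paper provides.
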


			We consider  the Cauchy problem associated with \eqref{E:2.50} where initial data  given on the initial hyperplane 
			$$
			\mathcal{H}_0: \quad t=0.
			$$

			First, we observe that
			\begin{equation}\label{E:3.27}
			<A^0(W)\eta,\eta>=a_0|\eta_1|^2+b_0|\eta_2|^2+c_0|u|^2|\hat{\eta}|^2,
			\end{equation}
			where $<\cdot,\cdot>$ denotes the Eucilidian inner product in $\mathbb{R}^{5}$ and
			$$
			\eta=(\eta_1,\eta_2,\ldots,\eta_5)=(\eta_1,\eta_2,\hat{\eta})\in \mathbb{R}^5,\quad \hat{\eta}=(\eta_3,\eta_4,\eta_5)\in \mathbb{R}^3.
			$$			
			From \eqref{E:3.23a}, the matrix $A^0(W)$ can be positive definite only if the velocity $u$
			never vanishes. According to Friedlichs-Lax-Kato theory \cite{Kato, Majda}, a local in time solution exists and is unique in the Sobolev space $H^\sigma$ for $\sigma>\frac{5}{2}$. However, this lower bound on velocity is not physically realistic.

			In order to deal with this difficulty, 
			Lefloch and Ukai \cite{LeflochUkai} apply a well-chosen Lorentz transformation, which allows for the fact that  the Lorentz-transformed velocity  does not
			exceed some threshold and remains bounded away from the light speed.
			\subsection{Lorentz transformation}\label{S:3.2}
		\qquad The Lorentz transformation $(t,x)\rightarrow (\bar{t},\bar{x})$ associated with the vector $U\neq 0$ is defined by   
			\begin{equation}\label{E:3.23}\begin{cases}
			\bar{t}=\varpi\left(t-\frac{U\cdot x}{c^2}\right),\\
			\bar{{\bf x}}=-\varpi U t+\left(\mathbb{I}_3+(\varpi-1)\frac{U\otimes U}{U^2}\right)x,\\
			\bar{\rho}(\bar{t},\bar{x})=\rho(t,x),\\
			\bar{u}=\frac{d\bar{x}}{d\bar{t}}=\frac{1}{1-U\cdot u/c^2}\left(-U+\left(\varpi \mathbb{I}_3+(1-\varpi^{-1} )\frac{U\otimes U}{U^2}\right)u\right),			
			\end{cases}
			\end{equation}
			where $\varpi=\frac{1}{\sqrt{1-U^2/c^2}}$ represents the Lorentz factor.

			Using the Lorentz invariance property of the Euler equations, \eqref{E:2.50} can be also expressed in the transformed
			coordinates $(\bar{t},\bar{x})$ defined by \eqref{E:3.23}, that is
			\begin{equation} \label{E:3.28}
			A^0(\bar{W})(\bar{W})_{\bar{t}}+\sum_{j=1}^3A^j(\bar{W})_{\bar{x}_j}=0,
			\end{equation}
			where $\bar{W}=(\bar{z}_+,\bar{z}_-,\bar{\tilde{u}})$ is defined from the transformed unknowns $(\bar{\rho},\bar{u})$.
			The expression of \eqref{E:3.27} becomes
			\begin{equation}\label{E:3.29}
			<A^0(\bar{W})\eta,\eta>=\bar{a}_0|\eta_1|^2+\bar{b}_0|\eta_2|^2+\bar{c}_0|\bar{u}|^2|\hat{\eta}|^2,
			\end{equation}
			where $\bar{a}_0$, $\bar{b}_0$ and $\bar{c}_0$ are defined by \eqref{E:3.23a} with $(\rho,u)$ replaced by $(\bar{\rho},\bar{u})$.
			
			In view the upper and lower bounds \eqref{E:3.29}, we conclude the transformed matrix $A^0(\bar{W})$
			is positive definite in the coordinate system $(\bar{t},\bar{x})$, Hence Friedlichs-Lax-Kato theory \cite{Kato, Majda}
			applies to the initial value problem for \eqref{E:3.28}, provided initial data are imposed on the initial hypersurface
			$\bar{t}=0$. In the relativistic setting, the initial plane $H: t=0$ is not preserved by the transformation \eqref{E:3.23}.
			However, in the new coordinate system $(\bar{t},\bar{x})$ the initial plane becomes
			$$
			\bar{H}_0:\quad \quad \bar{t}=-\frac{U\cdot \bar{x}}{c^2}.
			$$
			
			In order to prove local well-posedness for the oblique initial-value problem \eqref{E:3.28} with data in $\bar{H}_0$, it is convenient
			to introduce a further change of coordinates
			\begin{equation}\label{E:3.30}
		t'=\bar{t}+\frac{{U\cdot \bar{x}}}{c^2},\quad x'=\bar{x},
			\end{equation}
			which maps the hyperplane $\bar{H}_0$ to the hyperplane
			$$
			H'_0:\quad \quad t'=0.
			$$
			
			This transformation puts the system \eqref{E:3.29} into the form
			\begin{equation} \label{E:3.31}
			B^0(W')(W')_{t'}+\sum_{j=1}^3B^j(W')
			W'_{x^{'}_j}=0,
			\end{equation}
			where $W'=(z'_+, z'_-, \tilde{u}')^\top$ is defined from the transformed unknowns $(\rho'(t',x')=\bar{\rho}(\bar{t},\bar{x}),u'(t',x')=\bar{u}(\bar{t},\bar{x}))$,
			$$
			B^0(W')=A^0(\bar{W})+\frac{1}{c^2} \sum_{j=1}^d U_j A_j(\bar{W}), \quad B_j(W')=A_j(\bar{W}).
			$$
 It has been proved in \cite{LeflochUkai} that $B^0(W')$ is positive definite via choosing proper $U$. Here we omit its proof.

			\subsection{Desired local-in-time well-posedenss}	
			\quad In turn, Friedrichs-Lax-Kato theory \cite{Kato, Majda} guarantees the existence of a solution defined
			in a small neighborhood of this hyperplane $H'_0$. Making the transformation back to the original variables,
			we obtain a solution in a small neighborhood of the initial line $t=0$. This finally gives the following theorem: 
			\begin{theorem}\cite{LeflochUkai}\label{leflochukai}
			If the initial data $( \rho_0, u_0)$ satisfy the following regularity conditions:
\begin{equation}\label{th78}
\begin{split}
0\leq \rho_0\leq M_1,\quad  |u_0|<c,\quad \sqrt{P'(\rho_0)}<c,\quad \left( \rho_0, u_0\right)\in H^3_{ul}(\mathbb{R}^3),
\end{split}
\end{equation}
for some constants $M_1>0$,
then there exists a positive  time $T_*$ and a unique classical  solution $(\rho, u)(t,x)$ in $[0, T_*]\times \mathbb{R}^3$ to   the  Cauchy problem  (\ref{RE})-(\ref{P}) with \eqref{initial1} satisfying 
\begin{equation}\label{th77}
\begin{split}
&\rho\geq 0,\quad  |u|<c,\quad \sqrt{P'(\rho)}<c,\\
& (\rho,u)\in C([0,T_*];H^3_{ul}(\mathbb{R}^3))\cap C^1([0,T_*];H^2_{ul}(\mathbb{R}^3)),\\
&  u_t+u\cdot\nabla u =0\quad  \text{whenever} \quad  \rho(t,x)=0.
\end{split}
\end{equation}
\end{theorem}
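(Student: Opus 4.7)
The plan is to follow the Lefloch–Ukai strategy already sketched in Subsections 8.1–8.2 of the appendix, turning the relativistic Euler system into a symmetric hyperbolic system whose time–matrix is uniformly positive definite even when $\rho_0$ touches zero, then invoking the standard Friedrichs–Lax–Kato theory for symmetric hyperbolic systems on Sobolev spaces, and finally transporting the resulting solution back to the original coordinates. The structural obstruction is that in the natural symmetrization $A^0(W) W_t + \sum_j A^j(W) W_{x_j} = 0$ from Lemma \ref{L:3.1}, the matrix $A^0(W)$ degenerates at $u = 0$ because of the $c_0 |u|^2 \mathbb{I}_3$ block in \eqref{E:3.23}–\eqref{E:3.23a}; this is an obstruction on the \emph{velocity} rather than on the density, so vacuum itself is not the enemy, but rest states are.

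First I would fix an initial velocity shift $U \in \mathbb{R}^3$ with $|U| < c$ chosen large enough that the Lorentz–transformed initial velocity $\bar u_0$ satisfies $|\bar u_0(\bar x)| \geq \delta_0 > 0$ uniformly in $\bar x$, using the relativistic velocity addition formula in \eqref{E:3.23}. Because the system \eqref{RE}–\eqref{P} is Lorentz invariant and the $H^3_{ul}$ regularity, the bounds $|u_0| < c$ and $\sqrt{P'(\rho_0)} < c$ are all preserved under \eqref{E:3.23}, this yields transformed initial data $(\bar \rho_0, \bar u_0) \in H^3_{ul}(\mathbb{R}^3)$ satisfying the same physical bounds but with velocity uniformly bounded away from zero. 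However, the Lorentz transform tilts the initial hyperplane $\{t = 0\}$ to the oblique hypersurface $\bar H_0 : \bar t = -\tfrac{U\cdot \bar x}{c^2}$, so I would then apply the linear change \eqref{E:3.30}, $t' = \bar t + U \cdot \bar x /c^2$, $x' = \bar x$, straightening $\bar H_0$ to $\{t' = 0\}$ and producing the symmetric system \eqref{E:3.31} with new matrices $B^0(W') = A^0(\bar W) + c^{-2} \sum_j U_j A^j(\bar W)$ and $B^j(W') = A^j(\bar W)$.

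The main technical step, which is essentially the content of \cite{LeflochUkai}, is to verify that $U$ can be chosen so that $B^0(W')$ is uniformly positive definite on the set of admissible states (velocities in the ball $|\bar u| < c$, sound speed $< c$, density bounded). Using the explicit block structure of $A^0$ and $A^j$ from \eqref{E:3.23}–\eqref{E:3.23a} and testing $B^0(W')$ against arbitrary $\eta = (\eta_1, \eta_2, \hat\eta)$, one obtains a quadratic form whose coefficients depend on $U$, $\bar u$, and $c_s$; the condition $|U| < c$ together with the lower bound on $|\bar u_0|$ will be used to keep the cross terms from dominating the diagonal $a_0, b_0, c_0 |\bar u|^2$ blocks. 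Once $B^0(W')$ is shown to be symmetric and positive definite on the admissible range with uniform lower bound, the system \eqref{E:3.31} is symmetric hyperbolic in the Friedrichs sense, and Kato's theorem (see \cite{Kato, Majda}) produces a unique solution $W' \in C([0,T_*']; H^3_{ul}) \cap C^1([0,T_*']; H^2_{ul})$ for some $T_*' > 0$ depending only on the $H^3_{ul}$ norm of the data and on the uniform ellipticity bounds.

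Finally I would pull this local solution back through \eqref{E:3.30} and \eqref{E:3.23} to the original $(t,x)$ coordinates, obtaining $(\rho,u)$ on a slab $[0, T_*] \times \mathbb{R}^3$ with $0 \leq \rho$, $|u| < c$, $\sqrt{P'(\rho)} < c$, and with the asserted $H^3_{ul}$–in–space / $C^1$–in–time regularity. Uniqueness is inherited from the uniqueness for the symmetric system \eqref{E:3.31}. It remains to verify the vacuum compatibility $u_t + u \cdot \nabla u = 0$ wherever $\rho = 0$: one inspects the original momentum equation $(\ref{RE})_2$ and notes that, because $P(0) = 0$ and $P'(\rho) \to 0$ as $\rho \to 0^+$ under the pressure law \eqref{P}, the conservative momentum equation divided by $\tilde\rho$ reduces pointwise to the transport equation $u_t + u \cdot \nabla u = 0$ on $\{\rho = 0\}$, provided the classical solution is smooth enough to allow this division, which follows from $(\rho, u) \in C^1$. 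The anticipated main obstacle is the uniform positive definiteness of $B^0(W')$, since this is the only place where the specific choice of Lorentz boost $U$ interacts nontrivially with the vacuum-admissible state space; once this linear algebra step is established, the rest is the standard Friedrichs–Lax–Kato machinery plus a change–of–coordinates bookkeeping.
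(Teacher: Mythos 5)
Your proposal follows essentially the same route the paper sketches in its appendix (Subsections 8.1--8.2), which is itself a summary of the LeFloch--Ukai proof: use the $(z_+,z_-,\tilde u)$ symmetrization of Lemma \ref{L:3.1}, observe the degeneracy of $A^0(W)$ at $u=0$, apply a Lorentz boost \eqref{E:3.23} with $|U|<c$ to push the velocity uniformly away from zero while preserving $|\bar u|<c$ and the $H^3_{ul}$ regularity, straighten the tilted initial hyperplane via \eqref{E:3.30}, check positive definiteness of $B^0(W')=A^0(\bar W)+c^{-2}\sum_j U_j A^j(\bar W)$, invoke Friedrichs--Lax--Kato for \eqref{E:3.31}, and pull back. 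This matches the paper's exposition point for point, and since the paper itself defers the details (in particular the positive definiteness of $B^0$) to \cite{LeflochUkai}, your sketch is at the same level of completeness.

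The one step that is actually wrong as written is your derivation of the vacuum compatibility $u_t+u\cdot\nabla u=0$ on $\{\rho=0\}$. You propose to divide the conservative momentum equation by $\tilde\rho$; but $\tilde\rho=(\rho+P(\rho)/c^2)/(1-|u|^2/c^2)$ vanishes precisely when $\rho=0$ (since $P(0)=0$), so the division is illegal exactly where you need the conclusion. Expanding the momentum equation with the product rule and using the continuity equation only yields $0=0$ at a vacuum point, not the transport equation. The actual reason the vacuum compatibility holds in the LeFloch--Ukai framework is structural: in the symmetric variables $(z_+,z_-,\tilde u)$, when $\rho=0$ one has $z_+=-z_-=R(|u|)$ and the system \eqref{E:3.31} reduces to pure transport equations for $R(|u|)$ and $\tilde u$, which upon reconstruction gives $u_t+u\cdot\nabla u=0$. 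This is why condition $(\textrm{C})$ in Definition \ref{d1} is part of the solution class rather than an afterthought; it is guaranteed by solving the symmetric system, not by manipulating the original conservative form at vacuum.
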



			\bigskip

			\textbf{Acknowledgements} The authors wish to thank Profs. Gui-Qiang G. Chen, Yongcai Geng and Qian Wang for many useful discussions. Nikolaos Athanasiou is supported by the Engineering and Physical Sciences Research Council grant [EP/L015811/1].  Shengguo Zhu is partially  supported 
			by  Australian Research Council grant DP170100630 and  the Royal Society-- Newton International Fellowships NF170015.

		\end{document}